\documentclass[a4paper]{amsart}
\usepackage{amsmath,amssymb,amsthm,enumerate,graphicx,mathrsfs}
\usepackage{hyperref}

\title[Ambient and Poincar\'e metrics for CR $3$-manifolds]
{Ambient and Poincar\'e metrics for CR $3$-manifolds associated with the self-dual Einstein ACH metric}

\author{TAIJI MARUGAME}
\date{}

\renewcommand\a{\alpha}
\renewcommand\b{\beta}
\newcommand\g{\gamma}
\renewcommand\d{\delta}
\newcommand\e{\epsilon}
\renewcommand\th{\theta}

\newcommand\pa{\partial}
\newcommand\ol{\overline}
\newcommand{\wt}{\widetilde}
\newcommand{\wh}{\widehat}

\newtheorem{lem}{Lemma}[section]

\newtheorem{theorem}[lem]{Theorem}
\newtheorem{prop}[lem]{Proposition}
\newtheorem{cor}[lem]{Corollary}
\theoremstyle{definition}
\newtheorem{dfn}[lem]{Definition}
\newtheorem{rem}[lem]{\it Remark}

\numberwithin{equation}{section}

\makeatletter
\@addtoreset{equation}{section}

\makeatother
\address{Department of Mathematics, The University of Electro-Communications, 1-5-1 Chofugaoka, Chofu, Tokyo 182-8585, Japan}
\email{marugame@uec.ac.jp}

\keywords{CR manifold; Fefferman metric; ACH metric; Poincar\'e metric; ambient metric}

\subjclass[2020]{Primary~53B20, Secondary~32V05}

\begin{document}

\begin{abstract} 
We construct new ambient and Poincar\'e metrics for the Fefferman conformal manifold over $3$-dimensional CR manifolds, starting from the associated self-dual Einstein ACH metric. Unlike the classical ambient and Poincar\'e metrics arising from approximate solutions to the complex Monge-Amp\`ere equation, these metrics satisfy Einstein-Maxwell-type equations to infinite order rather than the Ricci-flat or Einstein equations. 
Since these metrics are determined to infinite order without ambiguity, they enable us to construct CR invariant differential operators and local CR invariants involving arbitrarily high order derivatives of the Tanaka-Webster curvature and torsion. As an application, we obtain the ambient metric construction of the CR GJMS (Gover-Graham) operators of all orders in dimension $3$.  
\end{abstract}
\maketitle


\section{Introduction}

The Fefferman-Graham ambient  metric and the Poincar\'e metric (\cite{FG}) are canonical geometric structures associated with conformal manifolds.
For the standard conformal sphere $S^m$, realized as the projectivization of the null cone $\mathcal{N}$ in the Minkowski space $\mathbb{R}^{m+1,1}$, the ambient metric is the Minkowski metric on $\mathbb{R}^{m+1,1}$
and the Poincar\'e metric is the hyperbolic metric on the hyperboloid or on the Poincar\'e ball having $S^m$ as the boundary at infinity.   
For a general conformal manifold $(\mathcal{C}^m, [g])$ of signature $(p, q)$, the null cone $\mathcal{N}$ is replaced by the metric bundle $\mathbb{R}_+\to\mathcal{G}\to \mathcal{C}$ and one constructs the ambient metric as a pseudo-Riemannian metric $\wt g$ of signature $(p+1, q+1)$ on $\wt{\mathcal{G}}\cong \mathcal{G}\times(-\e_0, \e_0)_\rho$ satisfying the asymptotic Ricci-flat equation
\[
\mathrm{Ric}(\widetilde g)=\begin{cases}
O(\rho^\infty) & (m: \textrm{odd}) \\
O(\rho^{m/2-1}) & (m:  \textrm{even}).
\end{cases}
\]
Restricting $\wt g$ to an appropriate hypersurface asymptotic to $\mathcal{G}\times\{0\}$, one obtains the associated Poincar\'e metric of signature $(p+1, q)$ which satisfies the asymptotic Einstein equation.  The expansion of $\wt g$ in $\rho$ is determined to infinite order when the dimension $m$ is odd. However, for even $m$, an obstruction to formally solving the Ricci-flat equation gives rise to an ambiguity in higher order terms of $\wt g$ and the corresponding Poincar\'e metric.

The ambient metric provides a powerful tool for constructing differential invariants and invariant differential operators on conformal manifolds. Among these, the GJMS (Graham-Jenne-Mason-Sparling) operators \cite{GJMS} form a particularly important family. These are conformally invariant linear differential operators acting on conformal densities whose principal parts are  powers $\Delta^k$ of the Laplacian. Although such operators are constructed for all $k$ when $m$ is odd, one has the restriction $k\leq m/2$ in even-dimensional cases due to the ambiguity of $\wt g$, and it is proved by Graham \cite{G1} (for $m=4, k=3$) and Gover-Hirachi \cite{GH} (for general cases) that there are no such invariant operators for $k>m/2$. Thus, there can be no refinement of the ambient metric in general even-dimensional conformal geometry that yields higher order GJMS operators.
Nevertheless, one may still hope for a certain refinement of the ambient metric within a special class of even-dimensional conformal manifolds, namely the Fefferman conformal manifolds associated with strictly pseudoconvex CR manifolds.

The Fefferman metric $[g^{\rm F}_\th]$ is a Lorentzian conformal metric which is canonically defined on a principal $S^1$-bundle $\mathcal{C}$ over a strictly pseudoconvex CR manifold $(M^{2n+1}, H, J)$. We can obtain CR invariant objects on $M$ by applying conformally invariant constructions to the Fefferman space $(\mathcal{C}, [g^{\rm F}_\th])$, but since $\dim \mathcal{C}=2n+2$ is even, the constructions are restricted due to the ambiguity of the ambient metric. 
In particular, the ambient metric construction of the CR GJMS operators, i.e., CR invariant linear differential operators acting on CR densities $\mathcal{E}(w, w')$ whose principal parts are powers $\Delta_b^k$ of the sublaplacian, 
works only up to $k=n+1$. However, in contrast to the case of general even-dimensional conformal manifolds, Gover-Graham \cite{GG} proved the existence of CR invariant powers of sublaplacian for specific values of $k>n+1$ by using different methods.  In particular, they showed that in dimension $3$ $(n=1)$, there exist such operators for {\it all} $k$:
\begin{theorem}[{\cite[Theorem 1.3]{GG}}]\label{GJMS-three-dim}
Let $(M, H, J)$ be a $3$-dimensional strictly pseudoconvex CR manifold. For any $(w, w')\in \mathbb{C}^2$ with $w-w'\in\mathbb{Z}$, $k:=w+w'+2\in\mathbb{N}_+$, there exists a CR invariant linear differential operator 
\[
P_{w, w'}\colon \mathcal{E}(w, w')\longrightarrow \mathcal{E}(w-k, w'-k)\
\]
on $M$ whose principal part is $\Delta_b^k$.
\end{theorem}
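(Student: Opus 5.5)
Following the abstract, I would obtain $P_{w,w'}$ from an ambient metric $\wt g$ on the Fefferman space $(\mathcal{C},[g^{\rm F}_\th])$ that is determined to infinite order, instead of from the classical Monge--Amp\`ere ambient metric, which carries an ambiguity beyond order $n+1=2$.

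The first step is to start from the self-dual Einstein ACH metric $g_+$ on a collar $X=M\times[0,\e_0)$ associated with $(M,H,J)$. By Biquard's work, in dimension $3$ this metric is formally determined to infinite order by the CR structure. The second step lifts $g_+$ to a metric on a circle bundle over $X$ whose boundary is the Fefferman bundle $\mathcal{C}$. For the circle direction I would use a connection form built from the self-dual (anti-self-dual) part of the curvature of $g_+$, which plays the role of a Maxwell field. This yields a Poincar\'e-type metric $g_+^{\mathcal{C}}$ of dimension $5$ whose conformal infinity is $[g^{\rm F}_\th]$. The third step forms the cone metric $\wt g=2\rho\,dt^2+t^2 g_+^{\mathcal{C}}$ on $\wt{\mathcal G}$ and puts it in normal form. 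From this I would check two things:
\begin{itemize}
\item $\wt g$ is homogeneous of degree $2$;
\item $\wt g$ satisfies an Einstein--Maxwell-type equation $\mathrm{Ric}(\wt g)=T$ to infinite order, where $T$ is an explicit tensor quadratic in the Maxwell field.
\end{itemize}
Every ingredient is canonically determined by the CR structure to infinite order, so $\wt g$ is determined to infinite order with no ambiguity.

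Next I would construct the operator in GJMS fashion. A density $f\in\mathcal{E}(w,w')$ with $w-w'\in\mathbb{Z}$ corresponds to a function on $\mathcal{G}$ that is homogeneous of degree $w+w'$ in the $\mathbb{R}_+$ direction, with $S^1$-weight $w-w'$. Extend $f$ arbitrarily to $\wt f$ on $\wt{\mathcal G}$, homogeneous of the same degree, and set
\[
P_{w,w'}f=\wt\Delta^k\wt f\big|_{\mathcal G},\qquad k=w+w'+2 .
\]
The standard argument then applies. In normal form one has $\wt\Delta(\rho\,\phi)=\rho\,\wt\Delta\phi+(\text{const})\bigl(2\rho\pa_\rho+2\,\mathrm{deg}+ m\bigr)\phi$, with $m=4$. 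Hence the ambiguity $\rho^j\phi$ is annihilated at the critical degree $k=w+w'+2$. The coefficient is $2(w+w'-2(k-1))+4$, up to normalization, which vanishes for $k=w+w'+2$.

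I expect the main obstacle to be controlling the correction terms. Unlike the Ricci-flat case, $\wt g$ is not Ricci-flat, so I would redo the commutator computation $[\wt\Delta,\rho]$ and check that the extra terms coming from $T$ are tangential and of higher order in $\rho$. This should hold because the Maxwell field is horizontal and has the right homogeneity. Once that is done, $P_{w,w'}f$ is independent of the extension and is natural, so it is CR invariant; changing $\th$ corresponds to a diffeomorphism of normal forms. Its principal part is the restriction of $\wt\Delta^k$, which is a multiple of $(\Delta_{g^{\rm F}})^k$ and hence acts on $S^1$-equivariant functions with principal part $\Delta_b^k$. Finally, the operator should agree with Gover--Graham's $P_{w,w'}$ after normalizing constants.
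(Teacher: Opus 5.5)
Your architecture matches the paper's: the self-dual Einstein ACH metric $g$, the Kaluza--Klein lift $g'=-\omega^2+\pi^*g$, the ambient metric built from the even Poincar\'e metric, then GJMS. However, the step that carries all the weight is missing, and what you put in its place is not the right object. The circle connection is not ``built from the self-dual part of the curvature of $g_+$''. It is a new unknown, normalized as $\omega=\sigma-\th/r^2-\psi/r$, which gives $g'(K,K)=-1$ and conformal infinity $[g^{\rm F}_\th]$. One then requires its \emph{own} curvature $F=d\omega$ to be self-dual with respect to $g$, with $\psi'_0=r\psi_0$ and $\psi_1$ of order $O(r^2)$, as forced by $\underline g=g$ and the low-order expansion of $g$. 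That this determines $\omega$ to infinite order is the real content, and it must be proved. Modulo negligible terms, $2F^-_{\infty0}\equiv-r\pa_r\psi'_0$ and $2F^-_{\infty1}\equiv-(r\pa_r-1)\psi_1$. The remaining components of $F^-$ are controlled by these two (Lemma \ref{self-dual-lemma}). Since the indicial roots $0,1$ lie below the starting order $2$, the coefficients are fixed recursively. Your sentence ``every ingredient is canonically determined'' is exactly where the Monge--Amp\`ere ambient metric fails, so it cannot be taken for granted. You also need $g'$ to be even, so that $g'=4dr^2/r^2+4r^{-2}g_{-r^2/2}$ produces a smooth normal form $\wt g=2\rho\,dt^2+2t\,dt\,d\rho+t^2g_\rho$. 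Your $2\rho\,dt^2+t^2g_+^{\mathcal C}$ is not this metric; on $\{\wt\rho<0\}$ the correct relation is $\wt g=\frac{u^2}{4}g'-du^2$ with $u=rt$.

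Conversely, the obstacle you single out is not one. For any straight metric, in particular any metric in normal form, $\wt\nabla_A\wt\rho=2E_A$ and $\wt\nabla_AE^A=6$. Hence $\wt\Delta(\wt\rho\,\phi)=\wt\rho\,\wt\Delta\phi-(4E+12)\phi$, with no curvature terms at all. The field equations enter only through the uniqueness of $\wt g$, which is what gives naturality and CR invariance. Your bookkeeping of this identity is also wrong: no single-step coefficient vanishes, and your $2(w+w'-2(k-1))+4$ equals $-2(w+w')$ at $k=w+w'+2$. Iterating gives $\wt\Delta^k(\wt\rho\,\phi)=\wt\rho\,\wt\Delta^k\phi-4k(w+w'+2-k)\wt\Delta^{k-1}\phi$, and it is this accumulated factor that vanishes. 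Finally, since $\mathcal E(-1,0)^{\otimes3}=K_M$, a density in $\mathcal E(w,w')$ is a function on $\mathcal G$ only when $3\mid w-w'$. To cover all $w-w'\in\mathbb Z$ you must work, as the paper does, on the $3$-fold cover $\mathcal C^{1/3}=(\mathcal E(-1,0)\setminus\{0\})/\mathbb R_+$ with the lifted $\wt g$.
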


The aim of this paper is to construct a new ambient metric for the Fefferman space $(\mathcal{C}^4, [g^{\rm F}_\th])$ over CR $3$-manifolds without ambiguity and give an alternative proof to Theorem \ref{GJMS-three-dim}. Since an ambient metric can be reconstructed from the corresponding Poincar\'e metric, we begin by constructing a conformally compact metric $g'$ whose conformal infinity is the Fefferman space. 

To this end, we start with an asymptotically complex hyperbolic (ACH) Einstein metric which has $(M^3, H, J)$ as the boundary at infinity. The ACH metric is the CR analogue of the Poincar\'e metric in conformal geometry and is modeled on the complex hyperbolic metric
whose boundary at infinity is the CR sphere. When $M$ is the boundary of a strictly pseudoconvex domain $\Omega\subset \mathbb{C}^{n+1}$, an ACH Einstien metric is given by Cheng-Yau's complete K\"ahler-Einstein metric on $\Omega$ constructed via the (approximate) solution $\rho$ to the complex Monge-Amp\`ere equation (\cite{CY, F}). The solution $\rho$ also yields a K\"ahler potential of the ambient metric for the Fefferman space over $M$, and this construction is in fact the historical origin of the ambient metric construction in conformal geometry.

By Matsumoto \cite{Mat1, Mat2}, the construction of Einstein ACH metrics has been generalized for partially integrable CR manifolds, but the metric also has an ambiguity arising from an obstruction to solving the Einstein equation formally. However, the obstruction vanishes in the case of integrable CR structures. This suggests the possibility of constructing  an ACH metric without ambiguity by imposing an additional condition which is compatible with the Einstein equation. When $\dim M=3$, the author \cite{Mar} proved that the self-dual equation serves as such a condition, and constructed an ACH metric $g$ on $\ol X\cong M\times [0, \e)_r$ satisfying
\[
\mathrm{Ric}(g)+\frac{3}{2}g=O(r^\infty), \quad W^-(g)=O(r^\infty).
\] 
Unlike the approximate Cheng-Yau metric, this ACH metric is determined uniquely to  infinite order in $r$, and can be used to reprove Theorem \ref{GJMS-three-dim} in the case where $w=w'$; one of our motivations to construct an ambient metric corresponding to $g$ is to remove this restriction.

To construct a Poincar\'e metric for $(\mathcal{C}^4, [g^{\rm F}_\th])$ from the self-dual Einstein ACH metric $g$, we introduce the notion of ``$S^1$-invariant Poincar\'e metrics''. Since the conformal structure $[g^{\rm F}_\th]$ is $S^1$-invariant, it is natural to consider a principal $S^1$-bundle 
\[
\pi\colon \ol{X'}\longrightarrow \ol X,\quad \bigl(\,\ol{X'}\cong\mathcal{C}\times[0, \e)_r,\ \ol X\cong M\times [0, \e)_r\,\bigr)
\]
with the boundary $\mathcal{C}\to M$, and a conformally compact $S^1$-invariant Lorentzian metric $g'$ on $X'\cong \mathcal{C}\times(0, \e)_r$ whose conformal infinity is $(\mathcal{C}, [g^{\rm F}_\th])$. We say $g'$ is an {\it $S^1$-invariant Poincar\'e metric} if it satisfies the asymptotically hyperbolic condition and the multiple of the infinitesimal generator of $S^1$-action
\[
K:=\frac{3}{2}\frac{\pa}{\pa s}\in T\ol{X'}
\]
satisfies $g'(K, K)=-1$; see \S\ref{S1-inv-Poincare} for the precise definition. It is proved that an (even) $S^1$-invariant Poincar\'e metric can be written in the form 
\[ 
g'=-\omega^2+\pi^* \underline{g}
\]
with the 1-form $\omega:=-K\lrcorner\, g'$ and an ACH metric $\underline{g}$ on the base space $X$. We call $\underline{g}$ the {\it induced ACH metric}
and impose the condition that this agrees with the self-dual Einstein ACH metric $g$. 
To determine $\omega$, we consider the 2-form 
\[
F:=d\omega=\frac{1}{2}F_{ab}\th^a\wedge\th^b, \quad F_{ab}=-2\nabla'_a K_b,
\]
which descends to a $2$-form on $X$, and  impose the self-dual equation 
\begin{equation}\label{F-intro}
F^-=O(r^\infty)
\end{equation}
with respect to $\underline{g}(=g)$.
We thereby obtain an $S^1$-invariant Poincar\'e metric $g'$ whose expansion in $r$ is uniquely determined (Theorem \ref{normalization-g-prime}).  As a consequence of the Einstein equation for $\underline{g}$ and the equation \eqref{F-intro}, the metric $g'$ satisfies the Einstein-Maxwell-type equation
\begin{equation}\label{E-M-intro}
E'_{ab}:=R'_{ab}+\frac{3}{2}g'_{ab}-\frac{1}{2}F_{ac}F_b{}^c-\frac{1}{4}
(|F|^2-6)K_a K_b=O(r^\infty).
\end{equation}
We note that our situation is similar to the Kaluza-Klein theory in general relativity.
The self-dual equations $W^-=O(r^\infty), F^-=O(r^\infty)$ can also be translated into intrinsic tensorial equations $L^-_{abcd}=O(r^\infty),  F^-_{ab}=O(r^\infty)$ on $X'$; see \S\ref{translation-self-dual} for the definitions of $L^-_{abcd}$, $F^-_{ab}$. The metric $g'$ is characterized by these equations, and we obtain the following theorem:  

\begin{theorem}\label{Poincare}
Let $(M, H, J)$ be a $3$-dimensional strictly pseudoconvex CR manifold and $(\mathcal{C}, [g^{\mathrm{F}}_\th])$ the Fefferman space over $M$. Then, there exists an $S^1$-invariant even Poincar\'e metric $g'$ on $X'=\mathcal{C}\times(0, \e)_r$ with the conformal infinity $(\mathcal{C}, [g^{\mathrm{F}}_\th])$ which satisfies 
\[
E'_{ab}=O(r^\infty), \quad L^-_{abcd}=O(r^\infty), \quad F^-_{ab}=O(r^\infty).
\]
The metric $g'$ is unique modulo $O(r^\infty)$ up to pull-back by $S^1$-equivariant diffeomorphisms of $\mathcal{C}\times[0, \e)_r$ fixing points on $\mathcal{C}\times\{0\}$. Moreover, the expansion of each component in the normal form of $g'$ with respect to a representative $g^{\mathrm{F}}_\th$ can be expressed by a universal formula in terms of the curvature and torsion of the Tanaka-Webster connection. 
\end{theorem}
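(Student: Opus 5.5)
The plan is to reduce everything to the base $\ol X=M\times[0,\e)_r$ via the Kaluza--Klein splitting. I would use the structure result quoted in the introduction: an even $S^1$-invariant Poincar\'e metric can be written as $g'=-\omega^2+\pi^*\underline g$, where $\omega=-K\lrcorner\, g'$ and $\underline g$ is an ACH metric on $X$.

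\textbf{Existence.} I take $\underline g=g$, the self-dual Einstein ACH metric of \cite{Mar}. It satisfies $\mathrm{Ric}(g)+\frac32 g=O(r^\infty)$ and $W^-(g)=O(r^\infty)$, and it is determined to infinite order. It then remains to find $\omega$. Its curvature $F=d\omega$ descends to $X$, so the conditions on $\omega$ become:

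\begin{itemize}
\item $dF=0$ and $F^-=O(r^\infty)$ with respect to $g$;
\item the boundary normalization that the conformal infinity of $g'$ is $[g^{\rm F}_\th]$.
\end{itemize}

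Concretely, $\omega$ must restrict at $r=0$ to the Fefferman connection form $\frac{1}{3}\bigl(ds+\ldots\bigr)$ built from $\th$ and the Webster scalar curvature, up to the $r^{-1}$ scaling.

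I would solve $F^-=0$ order by order in a normal form, writing $\omega=r^{-1}(\text{const}\cdot ds+\sigma(r))$. Combined with $dF=0$, self-duality makes $F$ a closed self-dual form, hence harmonic. In the ACH normal form this gives a Fuchsian ODE system in $r$ whose indicial roots, I expect, never coincide with the relevant even orders. That is exactly what rules out log terms and ambiguity, and it determines $\sigma$ modulo $O(r^\infty)$ from the boundary data. Each step is a local algebraic computation in Tanaka--Webster quantities, so the coefficients are universal formulas; this gives the last sentence of the theorem.

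\textbf{Equations.} Next I verify the three intrinsic equations. $E'_{ab}=O(r^\infty)$ follows from the standard Kaluza--Klein formulas for the Ricci curvature of $-\omega^2+\pi^*\underline g$ with $K$ Killing and $g'(K,K)=-1$. These express $R'_{ab}$ through $\mathrm{Ric}(\underline g)$, $F_{ac}F_b{}^c$, $|F|^2 K_aK_b$ and $\nabla^a F_{ab}$. The last term vanishes because $F$ is harmonic, and the Einstein condition on $\underline g$ gives exactly the combination \eqref{E-M-intro}. The equations $L^-=O(r^\infty)$ and $F^-_{ab}=O(r^\infty)$ are the translations of $W^-(\underline g)=O(r^\infty)$ and $F^-=O(r^\infty)$ given in \S\ref{translation-self-dual}.

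\textbf{Uniqueness.} I reverse this. Take any $g'$ satisfying the hypotheses and put it in $S^1$-equivariant normal form by an equivariant diffeomorphism fixing the boundary. Write $g'=-\omega^2+\pi^*\underline g$. The equations $L^-=0$ and $F^-=0$ translate back into $W^-(\underline g)=0$ and $F^-=0$. Contracting $E'$ with $K$ and projecting horizontally recovers $\mathrm{Ric}(\underline g)+\frac32\underline g=O(r^\infty)$; the $|F|^2$ term is absorbed by the $KK$ component. By the uniqueness in \cite{Mar}, $\underline g$ agrees with $g$ up to diffeomorphism. The order-by-order argument above then fixes $\omega$.

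\textbf{Main obstacle.} I expect the hardest step to be the indicial analysis for $F^-=O(r^\infty)$ coupled with the boundary normalization. One must show that the would-be free term at the critical order, the analogue of the $m/2$ ambiguity for $m=4$, is forced by self-duality and closedness. One must also check that the gauge freedom $\omega\mapsto\omega+df$ is fully absorbed by equivariant diffeomorphisms, including $s\mapsto s+f$. I would first check this directly on the model CR sphere to identify the indicial roots.
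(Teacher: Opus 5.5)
Your overall strategy matches the paper's. You take $\underline g=g$ from Theorem \ref{self-dual-ACH}, determine $\omega$ from $F^-=O(r^\infty)$, and read off $E'_{ab}$ from the Kaluza--Klein formulas (Proposition \ref{E-prime}, using $\nabla^iF_{ij}=O(r^\infty)$ from closedness plus self-duality). Uniqueness then comes from the $S^1$-equivariant normal form.

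\textbf{The gap.} The one genuinely new step, solving for $\omega$, is left as an expectation, and the setup you give for it is wrong.
\begin{enumerate}
\item Since $g'(K,K)=-1$ means $\omega(K)=1$, $\omega$ cannot be $r^{-1}(\mathrm{const}\cdot ds+\cdots)$. Also, with $K=\frac32\partial_s$ the boundary form is $\sigma=\frac23(ds-\cdots)$, not $\frac13(ds+\cdots)$.
\item $\omega$ must contain $-\theta/r^2$ to cancel the $\theta^2/r^4$ singularity of $g$. In the normal form one has $\omega=\sigma-\theta/r^2-\psi/r$, with $\psi=\psi_\alpha\theta^\alpha+\psi_{\bar\alpha}\theta^{\bar\alpha}+\psi_0\theta$ smooth.
\item Matching $\underline g$ with $g$ through \eqref{g-bar-component} and \eqref{lower-order} gives the initial conditions $\psi'_0:=r\psi_0=O(r^2)$ and $\psi_1=O(r^2)$. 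This is what your ``boundary normalization'' actually amounts to.
\end{enumerate}

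\textbf{What is missing.} You need the computation of Lemma \ref{F-psi}. By Lemma \ref{self-dual-lemma} only $F^-_{\infty0}$ and $F^-_{\infty1}$ matter. From \eqref{Fij-three-dim} and \eqref{volume-form} one gets, modulo terms $O(r)\cdot(r\partial_r)^l\mathcal{D}\psi$,
\[
2F^-_{\infty0}\equiv-r\partial_r\psi'_0+O(r^3),\qquad 2F^-_{\infty1}\equiv-(r\partial_r-1)\psi_1+O(r^3),
\]
with the $O(r^3)$ terms independent of $\psi$. So the indicial roots are $0$ and $1$. At order $r^{m+1}$ with $m\ge 2$ the coefficients $-(m+1)$ and $-m$ are invertible. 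There is no critical order, and the $m/2$-type obstacle you single out does not arise.

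The real content is at low order, and there your claim that the roots ``never coincide with the relevant orders'' is too quick.
\begin{itemize}
\item The root $1$ sits at an order that $\psi_1$ actually has, since $\psi$ is odd in $r$. Self-duality cannot see that coefficient; it is fixed only by $\psi_1=O(r^2)$.
\item $\psi'_0$ must come out $O(r^3)$. Otherwise $\underline g_{00}=1+r^3G_{00}+2\psi'_0+(\psi'_0)^2$ cannot match $g_{00}=1+O(r^3)$, and $G$ cannot be solved for.
\end{itemize}
So the low-order equations are consistency conditions on $\omega_0:=\sigma-\theta/r^2$, and the lemma's content is that $F^-_{\infty0}(\omega_0)$ and $F^-_{\infty1}(\omega_0)$ are $O(r^3)$. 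In the $\infty0$-component this rests on two cancellations you must verify. The constant $2$ in $F_{\infty0}$ cancels against $\varepsilon_{\infty0}{}^{1\bar{1}}F_{1\bar{1}}$. The $r^2R$ terms coming from $d\sigma$ and from $g_{1\bar{1}}=1-\frac{r^2}{2}R$ also cancel.

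\textbf{Uniqueness.} With this in place your uniqueness argument goes through.
\begin{itemize}
\item In the normal form of $g'$, $r$ is also the model defining function of $\underline g$. So Theorem \ref{self-dual-ACH} gives $\underline g=g+O(r^\infty)$ directly, with no further diffeomorphism.
\item No gauge $s\mapsto s+f$ with $f|_{r=0}=0$ survives the normal form, so your gauge worry disappears.
\item The equivalence $L^-\Leftrightarrow W^-(\underline g)$ uses the Einstein equation, since $\mathring L$ descends to $W$ only then. You must therefore extract $\mathrm{Ric}(\underline g)+\frac32\underline g=O(r^\infty)$ from $E'_{ij}$ before translating $L^-$.
\end{itemize}
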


As a notion of ambient metric corresponding to $S^1$-invariant Poincar\'e metrics, we consider  
$S^1$-invariant straight pre-ambient metrics $\wt g$ on the ambient space $\wt{\mathcal{G}}\cong\mathcal{G}\times(-\e_0, \e_0)_\rho$, where $\mathcal{G}\cong \mathcal{C}\times (0, \infty)_t$ is the metric bundle over the Fefferman space. We let $\wt K$ be $3/2$ times the infinitesimal generator of $S^1$-action on $\wt{\mathcal{G}}$, and assume that $\wt g$ satisfies the equation
\[
 \wt g(\wt K, \wt K)=\frac{\wt \rho}{4},
\]
where $\wt\rho:=\wt g(E, E)$ is the squared norm of the Euler field $E=t\partial_t$ on $\wt{\mathcal{G}}$. Then, restricting $\wt g$ to the hypersurface $\mathcal{H}=\{\wt\rho=-1\}\subset \wt{\mathcal{G}}$ and changing the variable as $\rho=-\frac{1}{2}r^2$, we have an $S^1$ invariant Poincar\'e metric $g'$ on $\mathcal{H}\cong \mathcal{C}\times (0, \e)_r$. Requiring $g'$ to coincide with the metric constructed in Theorem \ref{Poincare}, we obtain a uniquely determined ambient metric for the Fefferman space $(\mathcal{C}^4, [g^{\rm F}_\th])$.
The equations characterizing $g'$ can be reformulated in terms of curvature quantities of $\wt g$. In particular, the equation \eqref{E-M-intro} is equivalent to the equation
\[
\wt E_{AB}:=\wt R_{AB}+\frac{2}{\wt\rho}(\wt F_{AC}\wt F_B{}^C-\wt g_{AB})
-\frac{4}{\wt\rho^{\,2}}(|\wt F|^2-6)\wt K_A \wt K_B=O(\rho^\infty),
\]
where $\wt F_{AB}:=-2\wt\nabla_A \wt K_B$. See \eqref{F-prime}, \eqref{L-tilde}, \eqref{tilde-L-minus}, \eqref{tilde-F-minus} for the definitions of the ambient tensors $\wt L^-_{ABCD},  (\wt F')^-_{AB}$ corresponding to $L^-_{abcd},  F^-_{ab}$.  As a result, we obtain the following theorem:

\begin{theorem}\label{ambient}
Let $(M, H, J)$ be a $3$-dimensional strictly pseudoconvex CR manifold and $(\mathcal{C}, [g^{\mathrm{F}}_\th])$ the Fefferman space over $M$. Then, there exists an $S^1$-invariant straight pre-ambient metric $\wt g$ on $\wt{\mathcal{G}}=\mathcal{G}\times(-\e_0, \e_0)_\rho$ which satisfies 
\[
 \wt g(\wt K, \wt K)=\frac{\wt \rho}{4}
\]
and
\[
\wt E_{AB}=O(\rho^\infty), \quad \wt L^-_{ABCD}=O(\rho^\infty), \quad (\wt F')^-_{AB}=O(\rho^\infty).
\]
The metric $\wt g$ is unique modulo $O(\rho^\infty)$ up to pull-back by $\mathbb{C}^*$-equivariant diffeomorphisms of $\wt{\mathcal{G}}$ fixing points on $\mathcal{G}\times\{0\}$. Moreover, the expansion of each component in the normal form of $\wt g$ with respect to a representative $g^{\mathrm{F}}_\th$ can be expressed by a universal formula in terms of the curvature and torsion of the Tanaka-Webster connection. 
\end{theorem}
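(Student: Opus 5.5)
The plan is to deduce Theorem \ref{ambient} from Theorem \ref{Poincare}. The tool is the standard correspondence between straight pre-ambient metrics and Poincar\'e metrics (as in Fefferman--Graham), adapted to the $S^1$-invariant setting and to the extra condition $\wt g(\wt K,\wt K)=\wt\rho/4$.

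\textbf{Step 1: from $g'$ to $\wt g$.} Given the $S^1$-invariant even Poincar\'e metric $g'$ of Theorem \ref{Poincare}, in normal form $g'=r^{-2}(dr^2+h_r)$ on $\mathcal{C}\times(0,\e)_r$, define on the cone $\mathcal{C}\times(0,\infty)_s\times(0,\e)_r$
\[
\wt g=s^2 g' - ds^2 .
\]
Then change coordinates by $t=s/r$ and $\rho=-\tfrac12 r^2$. Since $g'$ is even in $r$, the result is
\[
\wt g=2\rho\, dt^2+2t\, dt\, d\rho+t^2 g_\rho ,
\]
with $g_\rho$ smooth in $\rho$. This extends smoothly across $\rho=0$, giving a straight pre-ambient metric whose initial value is $g^{\rm F}_\th$. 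The cone construction commutes with the $S^1$-action, so $\wt g$ is $S^1$-invariant and $\wt K$ is tangent to the level sets of $s$. On the level set $s=1$, i.e.\ $\wt\rho=-s^2=-1$, the metric restricts to $g'$. Homogeneity of degree $2$ then gives
\[
\wt g(\wt K,\wt K)=s^2 g'(K,K)=-s^2=\wt\rho .
\]
This differs from $\wt\rho/4$, so the normalization of $\wt K$ versus $K$ must be matched. I would check the scaling of the generator under $\rho=-\tfrac12 r^2$, together with the factor $4$ in the coefficients of $\wt E_{AB}$, and adjust the constant so that $\wt g(\wt K,\wt K)=\wt\rho/4$ holds exactly. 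The converse direction follows the same way: any $\wt g$ of the stated form restricts to $\mathcal H$ to give an $S^1$-invariant even Poincar\'e metric.

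\textbf{Step 2: translating the equations.} I would compute the curvature of the cone metric $\wt g=s^2g'-ds^2$ by the warped product formulas. The Euler field is $E=s\partial_s$, and the Weyl-type components of $\wt R$ restrict homogeneously to those of $g'$. In particular $\wt R_{AB}$ is tangential, and on $\mathcal H$ it equals $R'_{ab}+3g'_{ab}$. Next, $\wt K$ is Killing with $\wt F_{AB}=s^2F_{ab}$ (suitably scaled), and $|\wt F|^2$ restricts to $|F|^2$. Inserting these into the definition of $\wt E_{AB}$ and using $\wt\rho=-s^2$ together with $\wt K_A\wt K_B=s^4K_aK_b$ should give
\[
\wt E_{AB}\big|_{\mathcal H}=c\,E'_{ab}
\]
for a nonzero constant $c$. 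The components with $E$ inserted vanish automatically, by homogeneity and the identity $\wt\nabla E=\mathrm{id}$. Similarly, by their definitions \eqref{L-tilde}, \eqref{tilde-L-minus}, \eqref{tilde-F-minus}, the tensors $\wt L^-$ and $(\wt F')^-$ restrict to $L^-$ and $F^-$. Since every quantity is homogeneous in $t$, vanishing to order $O(r^\infty)$ on $\mathcal H$ is equivalent to vanishing to order $O(\rho^\infty)$ on $\wt{\mathcal G}$. Existence then follows from Theorem \ref{Poincare}.

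\textbf{Step 3: uniqueness and universality.} Suppose $\wt g_1,\wt g_2$ both satisfy the conditions. Their restrictions to $\mathcal H$ satisfy the hypotheses of Theorem \ref{Poincare}, so they agree mod $O(r^\infty)$ up to an $S^1$-equivariant diffeomorphism. Extending that diffeomorphism homogeneously in $t$ gives a $\mathbb{C}^*$-equivariant diffeomorphism, where $\mathbb{C}^*=\mathbb{R}_+\times S^1$ combines dilations and the circle action. This identifies $\wt g_1$ with $\wt g_2$ mod $O(\rho^\infty)$. Universality follows because the normal form components of $\wt g$ are obtained algebraically from those of $g'$ via $\rho=-\tfrac12r^2$.

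The main obstacle is Step 2. One must carefully verify the curvature identities for the Maxwell-type terms: the coefficients $2/\wt\rho$ and $4/\wt\rho^{\,2}$ must precisely reproduce the coefficients in $E'_{ab}$ under the $K$ versus $\wt K$ normalization. One must also check that the normal-direction components of $\wt E_{AB}$ vanish. I would first handle these in the flat model case, the Fefferman space of the CR sphere, to fix the constants.
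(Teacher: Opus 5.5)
Your strategy is the paper's. You build $\wt g$ from the even Poincar\'e metric of Theorem \ref{Poincare} via the warped product on $\{\wt\rho<0\}$, translate the three equations, and read off existence, uniqueness and universality. The gap is that Step 2, which carries all the content, is only asserted, and several of the assertions are wrong.

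\textbf{Normalization.} The paper's $g'$ has $|dr/r|^2_{g'}=1/4$, i.e.\ $g'=r^{-2}(4dr^2+g_r)$. The correct cone is $\wt g=\frac{u^2}{4}g'-du^2=-\frac{\wt\rho}{4}g'-du^2$ with $u=rt$. This gives $\wt g(\wt K,\wt K)=\wt\rho/4$ exactly. Your factor $4$ arises only because you applied $g'(K,K)=-1$ to $r^{-2}(dr^2+h_r)$, which is $g'/4$ and has $|K|^2=-1/4$. So nothing needs adjusting. Also $\wt R_{AB}=R'_{ab}+g'_{ab}$, not $R'_{ab}+3g'_{ab}$.

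\textbf{$\wt F$ does not descend.} Its horizontal part is $\frac{u^2}{4}F_{ab}$, but it has nonzero mixed components. From $\wt\nabla E=\mathrm{id}$ and $\wt g(E,\wt K)=0$ you get $E^A\wt F_{AB}=-2\wt K_B$. Differentiating $\wt g(\wt K,\wt K)=\wt\rho/4$, using the Killing equation and $d\wt\rho=2\wt g(E,\cdot)$, gives $\wt K^A\wt F_{AB}=\frac12E_B$. Consequently $|\wt F|^2=|F|^2+2$, not $|F|^2$. The tensor that descends is $\wt F'=-\frac{4}{\wt\rho}\wt F-\frac{8}{\wt\rho^{\,2}}\Omega$ with $\Omega_{AB}=2E_{[A}\wt K_{B]}$.

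\textbf{Consequences for $\wt E_{AB}$.} The identity $E^A\wt E_{AB}=0$ is not automatic from homogeneity and $\wt\nabla E=\mathrm{id}$. It comes from $E^A\wt F_{AC}\wt F_B{}^C=E_B$, which uses both identities above. On horizontal components, the shift $|\wt F|^2=|F|^2+2$ is cancelled by the mixed contribution $\frac{\wt\rho}{4}K_aK_b$ in $\wt F_{aC}\wt F_b{}^C$. This is why $\wt E_{AB}$ descends to $E'_{ab}$ exactly, with $c=1$.

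\textbf{$\wt L^-$ and $(\wt F')^-$.} These do not restrict to $L^-$ and $F^-$ by definition. You need $\wt\nabla_A\wt F_{BC}=-2\wt K^D\wt R_{DABC}$ together with $\wt R_{abcd}=-\frac{\wt\rho}{4}R'_{abcd}-\frac{\wt\rho}{16}(g'_{ac}g'_{bd}-g'_{bc}g'_{ad})$ to express $\nabla'_aF_{bc}$ through $\wt\nabla\wt F$. The $\Omega$- and $E$-dependent terms in \eqref{L-tilde} are there precisely because $\wt F$ and $\wt g$ do not descend. You also need the volume-form computation $\wt\mu_{CD}{}^{EF}=-\frac{\wt\rho}{2}\mu_{cd}{}^{ef}$, which fixes the factor $-1/\wt\rho$ in the ambient Hodge star. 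The same identities are required in the converse direction: to know that the restriction of an arbitrary $\wt g$ satisfying the equations meets the hypotheses of Theorem \ref{Poincare}.

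\textbf{Uniqueness (Step 3).} Extending the Poincar\'e diffeomorphism homogeneously in $t$ leaves you to prove smoothness across $\rho=0$. This is an evenness issue, since $r=\sqrt{-2\rho}$. The paper avoids it. It puts the second metric into ambient normal form relative to $g^{\mathrm F}_\th$ using the Fefferman--Graham normal-form diffeomorphism, which is $S^1$-equivariant by uniqueness. It then observes that the equations determine $g_\rho$ to infinite order.
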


Applying the GJMS construction to our ambient metric $\wt g$, we obtain an alternative proof of Theorem \ref{GJMS-three-dim}; see \S\ref{GJMS-construction}.

Finally, we briefly discuss another potential application of the new ambient metric.
In conformal geometry, 
Weyl invariants of the ambient metric, namely linear combinations of complete contractions of tensors formed from covariant derivatives of the curvature of $\wt g$, give rise to local scalar conformal invariants. In odd dimensions, every local conformal invariant arises in this way. In even dimensions, however, the conformal weight is subject to a restriction due to the ambiguity of the ambient metric (\cite{FG}). The situation in CR geometry is analogous to that in even-dimensional conformal geometry, and we cannot produce all local scalar CR invariants by using the ordinary ambient metric.  Since our new ambient metric is determined to infinite order, we can construct local CR invariants which involve high order derivatives of the curvature and torsion via Weyl invariants composed of $\wt K, \wt \nabla\wt K, \wt \nabla^{(m)}\wt R$ with no restriction on $m$. It is natural to ask whether all local CR invariants in dimension $3$ arise in this manner. We hope to revisit this problem in future work.

\medskip

This paper is organized as follows: In \S2, we review basic notions of CR geometry and the Fefferman conformal structure. In \S3, we explain the definition of ACH metrics and recall the existence theorem for self-dual Einstein ACH metrics. In \S4, we introduce $S^1$-invariant Poincar\'e metrics and the induced ACH metric, and construct  the Poincar\'e metric $g'$ appearing in Theorem \ref{Poincare}. We then express the curvature of $g'$ in terms of that of the induced ACH metric and derive the equations characterizing $g'$, thereby completing the proof of Theorem \ref{Poincare}. In \S5, we describe the ambient metric $\wt g$ corresponding to $g'$, and compute the equations characterizing it. Finally, we apply the GJMS construction to $\wt g$ to obtain an alternative proof of Theorem \ref{GJMS-three-dim}. 

\medskip

{\it Notation}: 
Throughout this paper, we adopt Einstein's summation convention. We use different types of indices for tensors on various spaces according to the following convention:
\begin{center}
\begin{tabular}{ll}
$\alpha,\beta,\gamma,\ldots$ & for $T^{1,0}M$,\\
$i,j,k,\ldots$ & for $TX$ or ${}^\Theta T\overline{X}$,\\
$a,b,c,\ldots$ & for $TX'$,\\
$A,B,C,\ldots$ & for $T\widetilde{\mathcal{G}}$.
\end{tabular}
\end{center}
Skew symmetrization over a collection of indices is denoted by square brackets. For example,
\[
B_{[ijk]}=\frac{1}{3!}(B_{ijk}+B_{jki}+B_{kij}-B_{jik}-B_{ikj}-B_{jik}).
\]
We identify the exterior product $\omega\wedge\eta$ and the symmetric product $\omega\cdot\eta$ of $1$-forms with the corresponding bilinear forms via
\begin{equation*}
\begin{aligned}
(\omega\wedge\eta)(V, W)&=\omega(V)\eta(W)-\omega(W)\eta(V), \\
(\omega\cdot\eta)(V, W)&=\frac{1}{2}(\omega(V)\eta(W)+\omega(W)\eta(V)).
\end{aligned}
\end{equation*}

\section{CR manifolds and the Fefferman metric}
\subsection{CR structures and the Tanaka-Webster connection}
A {\it CR structure} on a $(2n+1)$-dimensional $C^\infty$ manifold $M$ is a pair $(H, J)$ of a contact distribution $H\subset TM$ and an almost complex structure $J\in\mathrm{End}(H)$ on $H$. Let $\mathbb{C}H=T^{1, 0}M\oplus T^{0,1}M$ denote the $\pm i$ eigenspace decomposition for $J$. Then, $(H, J)$ is said to be {\it partially integrable} if 
\[
[\Gamma(T^{1, 0}M), \Gamma(T^{1, 0}M)]\subset \Gamma(\mathbb{C}H),
\]
and {\it integrable} if 
\[
[\Gamma(T^{1, 0}M), \Gamma(T^{1, 0}M)]\subset \Gamma(T^{1, 0}M).
\]
Note that a CR structure is always integrable in the $3$-dimensional case ($n=1$). Throughout this paper, we assume that $(H, J)$ is integrable.

A contact form $\th\in\Gamma(H^\perp)\subset\Gamma(TM)$ determines the  {\it Reeb vector field} $T\in \Gamma(TM)$,  characterized by the conditions $\th(T)=1, T\lrcorner\, d\th=0$. 
Given a local frame $(Z_\a)$ for $T^{1, 0}M$, the collection 
\[
(T, Z_\a, Z_{\ol\a}:=\ol{Z_\a})
\]
forms a local frame for $\mathbb{C}TM$, which we call an {\it admissible  frame}. Its dual frame $(\th, \th^\a, \th^{\ol\a})$ is called an {\it admissible coframe}. The {\it Levi form} associated with $\th$ is a $J$-invariant symmetric form on $H$ defined by 
\[
h_\th(X, Y)=d\th(X, JY)\quad X, Y\in H.
\]
Extending $h_\th$ complex bilinearly to $\mathbb{C}H$, we write $h_{\a\ol\b}:=h_\th(Z_\a, Z_{\ol\b})$. Then, we have 
\[
d\th=ih_{\a\ol\b}\th^\a\wedge\th^{\ol\b}.
\]

We say the CR structure $(H, J)$ is {\it strictly pseudoconvex} if there exists a contact form $\th$ for which the Levi form $h_\th$ is positive definite. Such a contact form is unique up to multiplication by a positive function $e^{\Upsilon}, \Upsilon\in C^\infty(M)$, and  the Levi form transforms conformally according to $h_{e^\Upsilon\th}=e^\Upsilon h_\th$. We assume that our CR structure is strictly pseudoconvex, and use the Levi form $h_{\a\ol\b}$ and its inverse $h^{\a\ol\b}$ to raise and lower indices according to 
\[
W^\a=h^{\a\ol\b}W_{\ol\b},\quad W^{\ol\a}=h^{\b\ol\a}W_{\b},\quad W_\a=h_{\a\ol\b}W^{\ol\b},\quad W_{\ol\a}=h_{\b\ol\a}W^\b.
\]

A choice of contact form $\th$ determines a linear connection $\nabla^{\mathrm{TW}}$ on $TM$ called the {\it Tanaka-Webster connection}, which is characterized by the following properties:
\begin{itemize}
\item The Reeb vector field is parallel: $\nabla^{\mathrm{TW}} T=0$.
\item If we extend $\nabla^{\mathrm{TW}}$ to $\mathbb{C}TM$, it preserves $T^{1, 0}M$ and the Levi form: 
\[
\nabla^{\mathrm{TW}}_X\bigl(\Gamma(T^{1, 0}M)\bigl)\subset\Gamma(T^{1, 0}M),\ \nabla^{\mathrm{TW}} h_\th=0.
\]
\item The connection 1-form $\omega_\b{}^\a$ defined by $\nabla^{\mathrm{TW}} Z_\b=\omega_\b{}^\a\otimes Z_\a$ satisfies the structure equation 
\begin{equation}\label{structure-equation}
d\th^\a=\th^\b\wedge\omega_\b{}^\a+A^\a{}_{\overline \b}\,\th\wedge\th^{\overline \b}
\end{equation}
with some tensor $A^\a{}_{\ol\b}$.
\end{itemize}
The tensor $A^\a{}_{\ol\b}$ appearing in \eqref{structure-equation} is called the {\it Tanaka-Webster torsion}, and it satisfies $A_{\a\b}=A_{\b\a}$, where $A_{\a\b}:=\ol{A_{\ol\a\ol\b}}$. 
The curvature forms $\Omega_\a{}^\b=d\omega_\a{}^\b-\omega_\a{}^\g\wedge\omega_\g{}^\b$ can be written as 
\begin{equation*}
\begin{aligned}
\Omega_\a{}^\b&=R_\a{}^\b{}_{\g\overline\mu} \th^\g\wedge\th^{\overline \mu}+\nabla^{\mathrm{TW}\, \b} A_{\a\g}\th^\g\wedge\th-\nabla^{\mathrm{TW}}_\a A^\b{}_{\overline \g}\th^{\overline \g}\wedge\th \\
& \quad -iA_{\a\g}\th^\g\wedge\th^\b+ih_{\a\overline\g}A^\b{}_{\overline\mu}
\th^{\overline\g}\wedge\th^{\overline\mu},
\end{aligned}
\end{equation*}
where $R_\a{}^\b{}_{\g\overline\mu}$ is the {\it Tanaka-Webster curvature tensor}. The {\it  Tanaka-Webster Ricci tensor} and the {\it Tanaka-Webster scalar curvature} are defined by 
\[
R_{\a\ol\b}:=R_\g{}^\g{}_{\a\ol\b}, \quad R:=R_\g{}^\g
\]
respectively. We also define the {\it Schouten tensor} by 
\[
P_{\a\ol\b}:=\frac{1}{n+2}\Bigl(R_{\a\ol\b}-\frac{R}{2(n+1)}h_{\a\ol\b}\Bigr)
\]
and denote its trace by $P=P_{\a}{}^\a=\frac{1}{2(n+1)}R$. Note that when $\dim M=3$ we have $R_{1\ol1 1\ol1}=Rh_{1\ol1}h_{1\ol1}$, $R_{1\ol1}=Rh_{1\ol1}$, and $P_{1\ol1}=\frac{1}{4}Rh_{1\ol1}$.

The Tanaka-Webster connection depends on the choice of contact form $\th$, and its transformation formula under the rescaling $\th\mapsto \wh\th=e^\Upsilon \th$ is described by a formula involving derivatives of $\Upsilon$; see, e.g., \cite{GG}.

\subsection{CR densities and natural differential operators}
The {\it CR canonical bundle} is the complex line bundle $K_M:=\wedge^{n+1}(T^{0,1}M)^\perp\subset\wedge^{n+1}\mathbb{C}T^\ast M$ over $M$. We locally choose  an 
$(n+2)$-nd root $\mathcal{E}(1,0)$ of $K_M^{-1}$, and define the {\it CR density bundles} by
\begin{equation}\label{CR-density}
\mathcal{E}(w, w^\prime)=\mathcal{E}(1,0)^{\otimes w}\otimes\overline{\mathcal{E}(1,0)}{}^{\otimes w^\prime}
\end{equation}
for $(w, w^\prime)\in\mathbb{C}^2, w-w^\prime\in\mathbb{Z}$. 
When $w=w'$, the bundle $\mathcal{E}(w,w')$ is independent of the choice of $\mathcal{E}(1,0)$ and is therefore globally defined. We call a (local) section of $\mathcal{E}(w, w')$ a {\it CR density of weight $(w, w')$}, and denote the space of such sections by the same symbol $\mathcal{E}(w, w')$. A CR density $f\in\mathcal{E}(w, w')$ can be identified with a complex-valued function $f$ on $\mathcal{E}(-1, 0)\setminus\{0\}$ satisfying the homogeneity condition
\[
\d_\lambda^* f=\lambda^w\bar{\lambda}^{w'}f\qquad \lambda\in\mathbb{C}^*,
\] 
where $\d_\lambda$ denotes the natural $\mathbb{C}^*$-action on $\mathcal{E}(-1, 0)$. The Tanaka-Webster connection $\nabla^{\mathrm{TW}}$  induces a linear connection on the CR density bundles, which we also denote by $\nabla^{\mathrm{TW}}$. 

For each choice of contact form $\th$, there exists a section $\zeta_\th\in K_{M}=\mathcal{E}(-n-2, 0)$ satisfying
\begin{equation}\label{volume-normalized}
\th\wedge(d\th)^n=i^{n^2}n!\th\wedge(T\lrcorner\,\zeta_\th)\wedge(T\lrcorner\,\overline\zeta_\th);
\end{equation}
such a section is uniquely determined up to multiplication by a $U(1)$-valued function. Then the CR density
\[
\tau_\th:=|\zeta_\th|^{-\frac{2}{n+2}}\in\mathcal{E}(1, 1)
\]
is uniquely determined by $\th$ and satisfies $\tau_{\wh\th}=e^{-\Upsilon}\tau_\th$ for $\wh\th=e^{\Upsilon}\th$. Thus, the {\it weighted Levi form}  and its inverse 
\[
\boldsymbol{h}_{\a\ol\b}=\tau_\th h_{\a\ol\b}, \quad 
\boldsymbol{h}^{\a\ol\b}=\tau_\th^{-1} h^{\a\ol\b}
\]
are invariant under changes of contact form.
 It can be shown that  $\tau_\th$ is parallel with respect to $\nabla^{\mathrm{TW}}$ and so is $\boldsymbol{h}_{\a\ol\b}$; see \cite[Proposition 2.1]{GG}. Conversely, a choice of positive section $\tau\in\mathcal{E}(1, 1)$ defines a unique contact form; such a section is called a {\it CR scale}.
 
We say a linear differential operator $P_\th \colon \mathcal{E}(w, w')\to \mathcal{E}(w-k,  w'-k)$ associated to each contact form $\th$ is a {\it natural differential operator} if it can be expressed as  a linear combination of complete contractions of 
\[
\boldsymbol{h}^{\a\ol\b}, \quad \nabla^{\mathrm{TW}}_\a,\quad   \nabla^{\mathrm{TW}}_{\ol\a},\quad \nabla^{\mathrm{TW}}_0\bigl(:=\tau_\th^{-1}\nabla^{\mathrm{TW}}_T\bigr), \quad  R_{\a}{}^\b{}_{\g\ol\mu},\quad A_{\a\b}, \quad A_{\ol\a\ol\b}.
\]
Here the covariant derivatives may also be applied to the curvature tensors and the torsion tensors, and $k$ is the number of $\boldsymbol{h}^{\a\ol\b}, \nabla^{\mathrm{TW}}_0$ appearing in each summand. For example, the {\it sublaplacian}
\[
\Delta_b:=-\boldsymbol{h}^{\a\ol\b}\bigl(\nabla^{\mathrm{TW}}_\a\nabla^{\mathrm{TW}}_{\ol\b}+\nabla^{\mathrm{TW}}_{\ol\b}\nabla^{\mathrm{TW}}_{\a}\bigr)\colon \mathcal{E}(w, w')\to \mathcal{E}(w-1, w'-1)
\]
is a natural differential operator.  We say a natural differential operator $P_\th$ is a {\it CR invariant differential operator} if it is independent of the choice of contact form: $P_{\wh\th}=P_\th$.

\subsection{The Fefferman metric} 
Let $(M, H, J)$ be a strictly pseudoconvex CR manifold of dimension $2n+1$. 
The Fefferman metric associated with $(M, H, J)$ was originally introduced by Fefferman \cite{F} in the setting of boundaries of strictly pseudoconvex domains in $\mathbb{C}^{n+1}$. An intrinsic construction for abstract CR manifolds was later developed by \cite{BDS, Lee, Fa}. In this paper, we follow the formulation of Lee \cite{Lee}.

We define the  {\it Fefferman space} as the principal $S^1$-bundle 
\[
\mathcal{C}:=(K_M\setminus\{0\})/\mathbb{R}_+
\]
over $M$.
We fix a contact form $\th$ and take an admissible coframe $(\th, \th^\a, \th^{\ol\a})$. 
Let $s\in\mathbb{R}/2\pi\mathbb{Z}$ denote the local fiber coordinate of $\mathcal{C}$ defined by the local section 
\[
\zeta=(\det h_{\a\ol\b})^{1/2}\th\wedge\th^1\wedge\cdots\wedge\th^n.
\]
Then, it can be shown that the real $1$-form 
\begin{align*}
\sigma&:=\frac{2}{n+2}\bigl(ds-{\rm Im}\,\omega_\a{}^\a-
P\th\bigr) \\
&=\frac{2}{n+2}\Bigl(ds+i\omega_\a{}^\a-\frac{i}{2}h^{\a\ol\b}d h_{\a\ol\b}-P\th\Bigr)
\end{align*}
is independent of the choice of $(\th^\a)$ and becomes a globally defined $1$-form on $\mathcal{C}$ determined by $\th$. Note that our $\sigma$ is twice the 1-form denoted by $\sigma$ in \cite{Lee}. Its exterior derivative descends to a $2$-form on $M$ and is given by
\begin{equation}\label{d-sigma}
d\sigma=2i P_{\a\ol\b}\th^\a\wedge\th^{\ol\b}+C_\a\th^\a\wedge\th+C_{\ol\a}\th^{\ol\a}\wedge\th,
\end{equation}
where 
\[
C_\a:=\frac{2}{n+2}\bigl(i\nabla^{\mathrm{TW}}_{\ol\g}A^{\ol\g}{}_\a-\nabla^{\mathrm{TW}}_\a P\bigr)
\]
and $C_{\ol\a}:=\ol{C_\a}$.
With respect to the local coframe $(\th, \th^\a, \th^{\ol\a}, \sigma)$, we define the {\it Fefferman
metric} on $\mathcal{C}$ by  
\[
g^{\rm F}_\th:=2\bigl(h_{\a\ol\b}\th^\a\cdot\th^{\ol\b}+\theta\cdot \sigma\bigr).
\]
This is an $S^1$-invariant Lorentzian metric and is equal to {\it twice} the Fefferman metric in the convention of \cite{Lee}.
For another choice of contact form $\wh\th=e^\Upsilon \th$, we have $g^{\rm F}_{\wh\th}=e^\Upsilon g^{\rm F}_\th$. Therefore, the conformal structure $[g^{\rm F}_\th]$ depends only on the CR structure $(H, J)$.


\section{$\Theta$-manifolds and ACH metrics}

We briefly review $\Theta$-structures and ACH metrics on manifolds with boundary, following the framework introduced by Epstein-Melrose-Mendoza \cite{EMM}.  Our exposition is mainly based on \cite{Mat1, Mat2}.
\subsection{$\Theta$-structures and $\Theta$-vector fields}
Let $X$ be the interior of a $(2n+2)$-dimensional manifold $\ol X$ with boundary $M$. We assume that $M$ is endowed with a strictly pseudoconvex CR structure $(H, J)$. A {\it $\Theta$-structure} on  $\ol X$ is the conformal class $[\Theta]$ of a 1-form $\Theta\in\Gamma(M, T^* \ol X)$ along $M$ whose restriction to $TM$ gives a contact form with positive Levi form on $M$. We call $(\ol X, [\Theta])$ a {\it $\Theta$-manifold}. Let $(\ol X', [\Theta'])$ be another $\Theta$-manifold with the same boundary $M$, and let  $f\colon (\ol X, [\Theta])\to (\ol X', [\Theta'])$ be a $C^\infty$-map which restricts to the identity map on $M$. We say $f$ is a {\it $\Theta$-diffeomorphism} if it restricts to a diffeomorphism between neighborhoods of $M$ and satisfies $[f^*\Theta']=[\Theta]$ on $M$.  

Let $r\colon \ol X\to [0, \infty)$ be a boundary defining function on $\ol X$; that is, $r^{-1}(0)=M$, $dr|_M\neq0$.
A vector field $V$ on $(\ol X, [\Theta])$ is called a {\it $\Theta$-vector field} if it satisfies
\[
V=O(r), \quad \wt\Theta(V)=O(r^2),
\]
where $\wt\Theta\in\Gamma(\ol X, T^*\ol X)$ is an arbitrary extension of some $\Theta\in[\Theta]$. Note that the condition is independent of the choices of a representative $\Theta$ and its extension. 
Choose a local coframe $(dr,\wt\Theta,\alpha^1,\dots,\alpha^{2n})$ near $M$ and let
$(N,T,Y_1,\dots,Y_{2n})$ denote the dual frame. Then, every $\Theta$-vector field can be written locally as a $C^\infty(\ol X)$-linear combination of
\begin{equation}\label{Theta-frame}
rN,\ r^2T, \ rY_1,\dots,rY_{2n}.
\end{equation}
On $X$, these vector fields form a local frame for $TX$, and 
one can check that the transition functions between such local frames  extend smoothly up to the boundary. Thus, we obtain a vector bundle ${}^\Theta T\ol X$ over $\ol X$, called the {\it $\Theta$-tangent bundle}, whose sections are identified with $\Theta$-vector fields.  Note that the restriction ${}^\Theta T\ol X|_X$ is canonically isomorphic to $TX$. 

A fiber metric of the $\Theta$-tangent bundle is called a {\it $\Theta$-metric}. We also have the dual ${}^\Theta T^*\ol X$ and its exterior power $\wedge^k({}^\Theta T^*\ol X)$, whose sections are called  {\it $\Theta$-$k$-forms}. 
\subsection{Admissible defining functions associated with a contact form}
Since the set of $\Theta$-vector fields vanishing at a fixed point $p\in M$ forms an ideal of the Lie algebra of the $\Theta$-vector fields, the fiber ${}^\Theta T_p \ol X$ inherits a Lie algebra structure from the Lie bracket. In terms of the $\Theta$-frame \eqref{Theta-frame}, the derived series of ${}^\Theta T_p \ol X$ is 
\[
{}^\Theta T_p \ol X\supset D_p^1\supset D_p^2\supset \{0\},
\]
where 
\[
D_p^1=\langle (r^2T)_p, \ (rY_1)_p,\dots,(rY_{2n})_p\rangle, 
\quad D_p^2=\langle (r^2T)_p \rangle.
\]
For a given $\Theta$-metric $g$, we define subbundles
$R\subset {}^\Theta T \ol X|_M$ and $L\subset D^1$ by the orthogonal decompositions: 
\[
{}^\Theta T_p \ol X=R_p\overset{\perp}{\oplus} D_p^1, \quad D_p^1=L_p\overset{\perp}{\oplus}  D_p^2.
\] 
The section 
\[
\Bigl(\frac{dr}{r}\Bigr)\Big|_M\in \Gamma(M, {}^\Theta T^*\ol X)
\]
is independent of the choice of boundary defining function $r$, and its restriction to $R$ provides a coframe for $R$. We denote by $\nu\in \Gamma(R)$  the dual frame:
\[
\Bigl(\frac{dr}{r}\Bigr)(\nu)=1.
\] 

Using the $\Theta$-metric $g$, we can associate a $1$-jet of a boundary defining function along $M$ with each choice of contact form $\theta$ as follows:  Choose $\Theta\in[\Theta]$ such that $\Theta|_{TM}=\theta$, and extend it arbitrarily to a $1$-form $\wt\Theta$ on $\ol X$. For a boundary defining function $r$, the section 
\[
(r^{-2}\wt\Theta)|_{D^2}\in\Gamma(M, (D^2)^*)
\]
is independent of the choice of extension $\wt\Theta$ and depends only on the $1$-jet of $r$ along $M$. We say that $r$ is an {\it admissible boundary defining function} associated with $\theta$ 
if the section $t\in\Gamma(D^2)$ determined by
\[
(r^{-2}\wt\Theta)(t)=1
\]
satisfies $|t|_g=1$.
This condition normalizes the $1$-jet of $r$. Moreover, if $\wh\th=e^{\Upsilon}\th$ is another contact form, then any admissible boundary defining function $\wh r$ associated with $\wh\th$ satisfies 
\[
\wh r=e^{\wt\Upsilon/2}r+O(r^2),
\]
where $\wt\Upsilon\in C^\infty(\ol X)$ is an arbitrary extension of $\Upsilon$.
\subsection{ACH metrics}
Let $r$ be an admissible boundary defining function associated with a contact form $\th$. Then, we obtain a well-defined linear map
\[
\lambda_\th\colon H_p\longrightarrow L_p, \quad 
Y\longmapsto \pi((r\wt Y)_p),
\]
where $\wt Y\in\Gamma(T\ol X)$ is an arbitrary extension of $Y$ and $\pi\colon {}^\Theta T_p \ol X\to L_p$ denotes the orthogonal projection with respect to $g_p$. One checks that $\lambda_\th$ is a linear isomorphism and that it is determined by the $1$-jet of $r$ and hence depends only on $\th$.
Via this identification, we define ACH metrics as follows:
\begin{dfn}\label{ACH-def}
A $\Theta$-metric $g$ on $(\ol X, [\Theta])$ is called an {\it asymptotically complex hyperbolic (ACH) metric} with CR infinity $(M, H, J)$ if it satisfies the following conditions:
\begin{enumerate}
\item[(i)]  For any boundary defining function $r$, we have 
\[
\Bigl|\dfrac{dr}{r}\Bigr|^2_g=\dfrac{1}{4}\quad \textrm{on}\ M,
\]
or, equivalently, $|\nu|^2_g=4$. 
\item[(ii)] For any $p\in M$, the map
\[
L_p\longrightarrow {}^\Theta T_p \ol X, \quad Z\longmapsto [\nu_p, Z]
\]
gives the identity map on $L_p$.
\item[(iii)] For any contact form $\th$ on $M$, we have 
\[
\lambda_\th^*(g|_{L})=2h_{\a\ol\b}\th^\a\cdot\th^{\ol\b},
\]
where the right-hand side denotes the Levi form on $H$ associated with $\th$.
\end{enumerate}
We call $(\ol X, [\Theta], g)$ an {\it ACH manifold}.
\end{dfn}
\subsection{Normal form ACH metrics}
For any CR manifold $(M, H, J)$, one can associate to the product space $M\times[0, \e)_r$ a canonical $\Theta$-structure $[\Theta]_{\mathrm{std}}$, called the {\it standard $\Theta$-structure} characterized by the condition $\Theta(\pa/\pa r)=0$. Given an  admissible frame $(T, Z_\a, Z_{\ol\a})$ for $\mathbb{C}TM$, we define a (complex) $\Theta$-frame $(\mbox{\boldmath $Z$}_i)$ on $M\times[0, \e)_r$ by
\begin{equation}\label{std-frame}
\mbox{\boldmath $Z$}_\infty=r\pa_r, \quad \mbox{\boldmath $Z$}_0=r^2 T, 
\quad \mbox{\boldmath $Z$}_\a=r Z_\a, \quad \mbox{\boldmath $Z$}_{\overline \a}=r Z_{\overline \a},
\end{equation}
where $\pa_r=\pa/\pa r$, and $T, Z_\a, Z_{\ol\a}$ are extended trivially to $M\times[0, \e)_r$. The dual $\Theta$-coframe $(\boldsymbol{\th}^i)$ is given by 
\begin{equation}\label{std-coframe}
\boldsymbol{\th}^\infty=\frac{dr}{r}, \quad \boldsymbol{\th}^0=\frac{\th}{r^2}, \quad \boldsymbol{\th}^\a=\frac{\th^\a}{r}, \quad 
\boldsymbol{\th}^{\ol\a}=\frac{\th^{\ol\a}}{r}.
\end{equation}

Let $g$ be a $\Theta$-metric on a $\Theta$-manifold $(\ol X, [\Theta])$ and let $\th$ be a contact form on the boundary $M$. By solving a first-order PDE, one can construct a $\Theta$-diffeomorphism
\[
f\colon (M\times[0, \e)_r, [\Theta]_{\mathrm{std}})\longrightarrow (\ol X, [\Theta])
\]
such that the components $g_{ij}:=(f^*g)(\mbox{\boldmath $Z$}_i, \mbox{\boldmath $Z$}_j)$ satisfy 
\[
g_{\infty\infty}=4, \quad g_{\infty 0}=g_{\infty\a}=0, \quad g_{00}=1+O(r);
\]
see \cite[Lemma 4.9]{Mat1} or \cite[Lemma 2.5]{Mat2}. The germ of $f$ along $M$ is uniquely determined by these conditions. Via $f$, we regard $r$ as a function on $\ol X$ and call it the {\it model defining function}. Note that the equation $g_{00}=1+O(r)$ corresponds to the condition that $r$ be admissible. In this setup, the ACH condition for $g$ can be characterized as follows:

\begin{prop}[{\cite[Proposition 4.12]{Mat1}, \cite[Proposition 2.7]{Mat2}}]\label{normal-ACH-condition}
A $\Theta$-metric $g$ is ACH if and only if 
\[
g_{0\a}=O(r), \qquad g_{\a\b}=O(r), \qquad g_{\a\overline{\b}}=h_{\a\overline{\b}}+O(r).
\]
\end{prop}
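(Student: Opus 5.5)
The plan is to check the three conditions of Definition \ref{ACH-def} directly in the normal-form $\Theta$-frame $(\boldsymbol{Z}_i)$ of \eqref{std-frame}. Throughout, $r$ is the model defining function, $\Theta$ is $\th$ extended trivially (so $\Theta(\pa_r)=0$), and all statements are at points $p\in M$, i.e.\ at $r=0$. Since the $g_{ij}$ are smooth up to the boundary, a condition "$g_{ij}=c_{ij}$ at $r=0$" is equivalent to $g_{ij}=c_{ij}+O(r)$.

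\emph{Step 1: condition (i) and the subbundles.} Since $g_{\infty\infty}=4$ and $g_{\infty 0}=g_{\infty\a}=0$, the metric is block diagonal with respect to $\boldsymbol{Z}_\infty$, so $g^{\infty\infty}=1/4$. Because $\boldsymbol{\th}^\infty=dr/r$, we get $|dr/r|_g^2=1/4$ on $M$. This holds for any boundary defining function, since $(dr/r)|_M$ does not depend on that choice. Thus (i) is automatic.

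From the frame, $D^1_p=\langle \boldsymbol{Z}_0,\boldsymbol{Z}_\a,\boldsymbol{Z}_{\ol\a}\rangle$ and $D^2_p=\langle\boldsymbol{Z}_0\rangle$. Block diagonality gives $R_p=\langle \boldsymbol{Z}_\infty\rangle$, hence $\nu=\boldsymbol{Z}_\infty=r\pa_r$. Since $g_{00}=1$ at $r=0$, the complement is
\[
L_p=\Bigl\langle \boldsymbol{Z}_\a-g_{\a0}\boldsymbol{Z}_0,\ \boldsymbol{Z}_{\ol\a}-g_{\ol\a0}\boldsymbol{Z}_0\Bigr\rangle .
\]

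\emph{Step 2: condition (ii).} First I note that $[\nu,\cdot\,]$ evaluated at $p$ depends only on the value of the second argument at $p$. Indeed $[\nu,fV]=(r\pa_r f)V+f[\nu,V]$, and $r\pa_rf$ vanishes on $M$. So it suffices to compute on frame fields with coefficients extended trivially in $r$. We have $[r\pa_r,rZ_\a]=rZ_\a$ and $[r\pa_r,r^2T]=2r^2T$. Hence for $Z=c^\a\boldsymbol{Z}_\a+c^{\ol\a}\boldsymbol{Z}_{\ol\a}+c^0\boldsymbol{Z}_0\in L_p$,
\[
[\nu,Z]=Z+c^0\boldsymbol{Z}_0 .
\]
Therefore (ii) holds if and only if $c^0=0$ for every element of $L_p$. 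By the description of $L_p$ in Step 1, this is equivalent to $g_{0\a}=0$ at $r=0$, i.e.\ $g_{0\a}=O(r)$.

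\emph{Step 3: condition (iii).} Assume $g_{0\a}=O(r)$, so that $L_p=\langle\boldsymbol{Z}_\a,\boldsymbol{Z}_{\ol\a}\rangle$. For $\th$ itself, $r$ is admissible: $r^{-2}\Theta(\boldsymbol{Z}_0)=1$ gives $t=\boldsymbol{Z}_0$, and $|t|^2=g_{00}=1$ on $M$. Then $\lambda_\th(Z_\a)=\pi(rZ_\a)=\boldsymbol{Z}_\a$, so
\[
\lambda_\th^*(g|_L)=g_{\a\b}\th^\a\th^\b+g_{\ol\a\ol\b}\th^{\ol\a}\th^{\ol\b}+2g_{\a\ol\b}\,\th^\a\cdot\th^{\ol\b}\quad\text{at } r=0 .
\]
This equals the Levi form $2h_{\a\ol\b}\th^\a\cdot\th^{\ol\b}$ if and only if $g_{\a\b}=O(r)$ and $g_{\a\ol\b}=h_{\a\ol\b}+O(r)$.

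\emph{Step 4: all contact forms.} It remains to pass from the fixed $\th$ to an arbitrary $\wh\th=e^\Upsilon\th$. I expect this to be the only step needing care. Admissible defining functions satisfy $\wh r=e^{\wt\Upsilon/2}r+O(r^2)$. Hence $\lambda_{\wh\th}(Y)=e^{\Upsilon/2}\lambda_\th(Y)$, because the $O(r^2)$ correction contributes only terms vanishing on $M$ after multiplication by $\wt Y$. Consequently $\lambda_{\wh\th}^*(g|_L)=e^{\Upsilon}\lambda_\th^*(g|_L)$. This matches the transformation $h_{\wh\th}=e^\Upsilon h_\th$ of the Levi form, so (iii) for one contact form implies it for all.

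Combining Steps 1–4 gives both directions of the equivalence.
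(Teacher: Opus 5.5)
Your proof is correct and complete. The paper gives no proof of this proposition and simply cites Matsumoto; your argument is the direct verification in the normal-form $\Theta$-frame that this citation points to:
\begin{itemize}
\item Condition (i) holds automatically, since $g_{\infty\infty}=4$ and $g_{\infty 0}=g_{\infty\a}=0$.
\item Condition (ii) is equivalent to $g_{0\a}=O(r)$: the bracket $[r\pa_r,\cdot\,]$ acts by $1$ on $\boldsymbol{Z}_\a$ and by $2$ on $\boldsymbol{Z}_0$, while $L_p$ is spanned by the vectors $\boldsymbol{Z}_\a-g_{\a 0}\boldsymbol{Z}_0$ and their conjugates.
\item Condition (iii) is read off at $r=0$ for $\th$, and then transported to every contact form via $\lambda_{\wh\th}=e^{\Upsilon/2}\lambda_\th$.
\end{itemize}
The one delicate point is reading (ii) as $[\nu_p,Z]=Z$ exactly, not after projecting to $L_p$, and you handle it correctly.
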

For an ACH metric $g$, its pullback $f^*g$ on $M\times[0, \e)_r$ is    called a {\it normal form ACH metric}.  
We say that an ACH metric $g$ is {\it even} if, for every contact form $\th$, its normal form can be written as
\[
f^* g=\frac{4d r^2+h_r}{r^2}
\]
with a family of Riemannian metrics $h_r$ on $M$ admitting an even Laurent expansion in $r$. When we write $f^*g$ in terms of the standard $\Theta$-coframe \eqref{std-coframe}, the evenness is equivalent to the condition that $g_{00}, g_{\a\b}, g_{\a\ol\b}$ are even in $r$, and $g_{0\a}$ is odd in $r$.
\subsection{The self-dual Einstein ACH metric}
For any strictly pseudoconvex partially integrable CR manifold $(M, H, J)$, Matsumoto \cite{Mat1, Mat2}  constructed an ACH metric $g$ on $(M\times [0, \e)_r, [\Theta]_{\mathrm{std}})$ that solves the Einstein equation $\mathrm{Ric}+\frac{n+2}{2}g=0$ to finite order in $r$. Using this metic, he further generalized the CR GJMS operators to the partially integrable setting (\cite{Mat3}). 

Although the obstruction arising in the construction of a formal solution to the Einstein equation vanishes for integrable CR structures,
an additional normalization condition is still needed to remove the ambiguity in the formal solution. In the case where $M$ is 3-dimensional, the author \cite{Mar} showed that the self-dual equation serves as such a normalizaiton condition, thereby refining Matsumoto's  result as follows:
\begin{theorem}[{\cite{Mar}}]\label{self-dual-ACH}
Let $(M, H, J)$ be a $3$-dimensional strictly pseudoconvex CR manifold. Then, there exists an even  ACH metric $g$ on $(M\times [0, \e)_r, [\Theta]_{\mathrm{std}})$ with CR infinity $(M, H, J)$ which satisfies 
\[
\mathrm{Ric}(g)+\frac{3}{2}g=O(r^\infty), \quad W^-(g)=O(r^\infty).
\] 
The metric $g$ is unique modulo $O(r^\infty)$ up to pull-back by $\Theta$-diffeomorphisms. Moreover, when $g$ is written in  normal form with respect to a contact form $\th$ on $M$, the expansion of each coefficient $g_{ij}$ in the $\Theta$-frame \eqref{std-frame} can be expressed by a universal formula in terms of the curvature and torsion of the Tanaka-Webster connection associated with $\th$. 
\end{theorem}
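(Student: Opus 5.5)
The plan is to run the formal normal-form Einstein construction of Matsumoto \cite{Mat1, Mat2} in the case $n=1$, and use the anti-self-dual Weyl tensor to fix the single undetermined coefficient. I then expect the Einstein condition to propagate $W^-=0$ to all higher orders.

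Fix a contact form $\th$ and write an even $\Theta$-metric in normal form with respect to the standard $\Theta$-frame \eqref{std-frame}:
\[
g_{\infty\infty}=4,\quad g_{\infty 0}=g_{\infty\a}=0,\quad g_{00}=1+O(r),\quad g_{0\a}=O(r),\quad g_{\a\b}=O(r),\quad g_{\a\ol\b}=h_{\a\ol\b}+O(r),
\]
using Proposition \ref{normal-ACH-condition}. In the coframe \eqref{std-coframe}, the Einstein tensor $E=\mathrm{Ric}(g)+\frac32 g$ at order $r^m$ has the schematic form
\[
E^{(m)}=\mathcal{I}(m)\,(g_{ij}^{(m)})+\bigl(\text{universal expression in lower order coefficients}\bigr),
\]
where $\mathcal{I}(m)$ is an indicial operator acting pointwise on the coefficients. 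Wherever $\mathcal{I}(m)$ is invertible on the relevant components, the Einstein equation determines $g^{(m)}$ recursively. The result is a universal polynomial in the Tanaka--Webster curvature, torsion and their covariant derivatives. By Matsumoto's analysis in dimension $2n+2=4$, the indicial operator degenerates only at the critical order $m=2n+2=4$, and only on the trace-free part of $g_{\a\b}$ (a section of $\mathrm{Sym}^2(T^{1,0})^*$, which has one complex component when $n=1$).

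At this critical order two things happen. The compatibility condition is the CR obstruction tensor $\mathcal{O}_{\a\b}$, which vanishes because $(H,J)$ is integrable, so the equation can be solved. The solution, however, leaves a free coefficient $\psi_{\a\b}=(g_{\a\b})^{(4)}$, and the higher-order Einstein equations are then solvable for every choice of $\psi$. I will also need the $\log$-free property: the integrability makes the obstruction vanish, so no $r^4\log r$ terms are required. With this in hand, the Einstein equation has a formal solution $g_\psi$ modulo $O(r^\infty)$ for each $\psi$.

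Next I compute $W^-(g_\psi)$ in the frame $\boldsymbol{Z}_i$, using the orientation induced from $M$. The lower-order coefficients are universal, so $W^-(g_\psi)$ vanishes up to some order automatically; I would check this by a direct but routine computation on the model complex hyperbolic metric, perturbed by the universal low-order terms. At the first possibly nonvanishing order, the leading coefficient of $W^-$ should be an affine function of $\psi$ whose linear part is an isomorphism onto the $(\a\b)$-type component of $W^-$. This is plausible because the $\Lambda^-$ curvature of the model sees exactly the $\mathrm{Sym}^2(T^{1,0})^*$ deformations. Setting that coefficient to zero determines $\psi$ uniquely and universally, for example in terms of $A_{\a\b}$ and its derivatives. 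It also remains to check that the other components of $W^-$ vanish at that order; this should follow from the Einstein equation already imposed.

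The main obstacle is showing that $W^-=O(r^\infty)$ once $\psi$ has been fixed, since the Einstein recursion no longer has any freedom left. For this I would use the contracted Bianchi identity for Einstein $4$-manifolds, $\delta W^-=0$. This is a first-order linear system for $W^-$ with coefficients determined by $g$. Writing $W^-=r^k\,w+O(r^{k+1})$, the leading term of $\delta W^-$ is an indicial map $\mathcal{J}(k)\,w$. I would show that $\mathcal{J}(k)$ is injective for all $k$ beyond the order at which $\psi$ was fixed, by computing it on the complex hyperbolic model, where $W^-$ decomposes under $U(1,1)$-type symmetry. Then $W^-=O(r^k)$ forces $W^-=O(r^{k+1})$, and by induction $W^-=O(r^\infty)$.

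Uniqueness follows from two facts. First, the normal form determines the germ of the $\Theta$-diffeomorphism. Second, every coefficient, including $\psi$, was determined at each step. Independence of the choice of $\th$ up to $\Theta$-diffeomorphisms follows because the equations are diffeomorphism invariant. Evenness is preserved because every recursion step respects parity in $r$.
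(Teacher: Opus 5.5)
The paper does not prove this statement; it is quoted from \cite{Mar}. So I judge your outline on its own terms, and it has a genuine gap in the structural input you attribute to Matsumoto. It is not true that in dimension four the normal-form Einstein recursion degenerates only at order $4$ on $g_{11}$. There is a second resonance at order $6=2n+4$ on the real block $(g_{00},g_{1\ol1})$, which you can see on the Heisenberg model. Take $g=d\tau^2+A(\tau)^2\th^2+B(\tau)^2\,\th^1\cdot\th^{\ol1}$ with $d\th=i\th^1\wedge\th^{\ol1}$ and $r=e^{-\tau/2}$. This is already in normal form, and the model is $A=e^\tau$, $B^2=2e^\tau$. Put $A=e^{\tau+\alpha}$ and $B^2=2e^{\tau+2\beta}$, so $\delta g_{00}=2\alpha$ and $\delta g_{1\ol1}=2\beta$. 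Linearizing $\mathrm{Ric}+\frac32g=0$ gives two evolution equations and the Hamiltonian constraint:
\[
\ddot\alpha+3\dot\alpha+2\dot\beta-\alpha+2\beta=0,\qquad
\ddot\beta+3\dot\beta+\tfrac12\dot\alpha+\alpha-2\beta=0,\qquad
2\dot\alpha+6\dot\beta+\alpha-2\beta=0 .
\]
For solutions $e^{\mu\tau}$, the evolution determinant is $x(x-4)$ with $x=\mu^2+3\mu$. The constraint excludes only $\mu=-4$. Besides the gauge mode $\mu=0$ and the growing mode $\mu=1$, this leaves $\mu=-3$ with $\alpha=-4\beta$. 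That is a normal-form Einstein perturbation $\delta g_{00}=-4\,\delta g_{1\ol1}\propto r^6$, and at this order all three equations have rank one. The mode satisfies the linearized K\"ahler condition $(B^2)'=2A$. Hence it is the tangent to the Heisenberg-invariant K\"ahler--Einstein family with potential $-\log(\rho-C\rho^4/4+\cdots)$, i.e.\ to Fefferman's $O(\rho^{n+3})$ ambiguity.

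This breaks your argument in three places.

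First, after $\psi$ is fixed the Einstein recursion still has a free real function at order $6$, so ``no freedom left'' is false.

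Second, the rank drop at order $6$ also produces a scalar solvability condition on the lower-order data, and it is not automatic. For $M=\pa\Omega\subset\mathbb{C}^2$, the Cheng--Yau metric is Einstein ACH with the same CR infinity, so it follows your recursion through order $5$ for some $\psi_{KE}$. Yet it carries an $r^6\log r$ term whenever Graham's obstruction is nonzero. So solvability ``for every $\psi$'' cannot be assumed, and you must check that the $\psi$ chosen by $W^-$ satisfies this condition.

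Third, the propagation step fails at $k=6$. The $r^6$ mode is Einstein, so $\delta(W^-)'=0$, but it is not self-dual. The paper's orientation is the complex one, so for K\"ahler metrics $W^-$ is the Bochner tensor. For $\alpha=a_0r^6$ the holomorphic sectional curvature of $\mathrm{span}(\pa_\tau,T)$ is $-\ddot A/A=-1-3a_0r^6+\cdots$. Since the mode is exactly $r^6$ times a constant in the $\Theta$-frame, $(W^-)'=r^6w$ with $w\neq0$ and $\mathcal{J}(6)w=0$. Thus $W^-=O(r^6)$ does not yield $O(r^7)$, and a generic value of the free order-$6$ function gives $W^-\neq O(r^7)$.

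To repair this, you must impose $W^-=0$ a second time at order $6$. Show that $\ker\mathcal{J}(6)$ is spanned by $w$; the order-$6$ coefficient of $W^-$ lies there by $\delta W^-=0$, so self-duality fixes the free mode. You must also verify the compatibility condition above. Only after that can a Bianchi-type propagation start, with injectivity of $\mathcal{J}(k)$ for $k\ge7$ actually checked rather than assumed. The substance of the theorem is that self-duality supplies the missing equations at \emph{both} Einstein resonances, orders $4$ and $6$; your outline handles only the first.
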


We refer the reader to \cite{B} for a twistor construction of $g$ when $(M, H, J)$ is real analytic.

\section{Poincar\'e metric for the Fefferman metric}

\subsection{$S^1$-invariant Poincar\' e metrics}\label{S1-inv-Poincare}
Let $(M, H, J)$ be a $(2n+1)$-dimensional strictly pseudoconvex CR manifold and let $(\mathcal{C}, [g^{\mathrm{F}}_\th])$ denote the Fefferman space over $M$. Let $\ol X\cong M\times[0, \e)$ be a $(2n+2)$-dimensional manifold with the boundary $\pa \ol X=M$ and $X\cong M\times(0, \e)$ its interior. We consider a principal $S^1$-bundle $\pi\colon \ol{X'}\to \ol X$ such that $\pa \ol {X'}=\mathcal{C}$ as a principal $S^1$-bundle over $M$ and denote its interior by $X'\to X$.  
\begin{dfn} A Lorentzian metric $g'$ on $X'$ is called an {\it $S^1$-invariant Poincar\'e metric} with conformal infinity $(\mathcal{C}, [g^{\mathrm{F}}_\th])$ if the following conditions are satisfied:
\begin{itemize}
\item[(i)] $g'$ is $S^1$-invariant.
\item[(ii)] The Killing vector field 
\[
K:=\frac{n+2}{2}\frac{\pa}{\pa s}
\]
satisfies 
\[
g'(K, K)=-1.
\]
Here, $\pa/\pa s$ is the infinitesimal generator of the $S^1$-action on $X'$. In particular, the integral curves of $K$ are geodesics.
\item [(iii)]$g'$ is a conformally compact metric with conformal infinity $(\mathcal{C}, [g^{\rm F}_\th])$, that is, for any boundary defining function $r\colon \ol{X'}\to [0, \infty)$, the metric $r^2 g'$ extends smoothly to a Lorentizan metric on $\ol{X'}$ satisfying $[(r^2 g')|_{T\mathcal{C}}]=[g^{\rm F}_\th]$.
\item[(iv)] $g'$ is ``asymptotically hyperbolic'' in the sense  that it satisfies
\[
\Bigl|\frac{dr}{r}\Bigr|^2_{g'}\bigl(=|dr|^2_{r^2 g'}\bigr)=\frac{1}{4}\quad {\rm on}\ \mathcal{C}
\] 
for an arbitrary boundary defining function $r\colon \ol{X'}\to [0, \infty)$.
\end{itemize}
\end{dfn}
In condition (iv) above, we adopt the nonstandard normalization constant $1/4$ instead of the more usual value $1$ in order to ensure compatibility with the normalization of ACH metrics on $\ol X$ in Definition \ref{ACH-def}(i). We also note that $K$ satisfies $\sigma(K)=1$ along the boundary $\mathcal{C}$.

Let $g'$ be an $S^1$-invariant Poincar\'e metric on $X'$. For each representative metric $g^{\rm F}_\th\in[g^{\rm F}_\th]$, there exists a boundary defining function $r$ on a neighborhood of $\mathcal{C}$ in $\ol{X'}$ satisfying
\[
(r^2 g')|_{T\mathcal{C}}=g^{\rm F}_\th, \quad \Bigl|\frac{dr}{r}\Bigr|^2_{g'}=\frac{1}{4}\ 
\text{near}\ \mathcal{C}.
\]
Such an $r$ is unique in a neighborhood of $\mathcal{C}$ and is called the {\it special defining function}; see {\cite[Lemma 2.1]{G}}. Shrinking $\ol{X'}$ if necessary and using the flow of $\mathrm{grad}_{r^2g'} r$, we obtain a diffeomorphism $\ol{X'}\cong\mathcal{C}\times 
[0, \epsilon)_r$. Under this identification, $g'$ takes the normal form
\[
g'=\frac{4dr^2+g_r}{r^2}
\]
where $g_r$ is a smooth one-parameter family of Lorentzian metrics on $\mathcal{C}$. We say $g'$ is {\it even} 
if $g_r$ admits even Taylor expansion in $r$. The evenness plays an important role when we construct the ambient metric from a Poincar\'e metric; see \S\ref{ambient-Poincare}.
\subsection{The induced ACH metric}
We will show that an $S^1$-invariant Poincar\'e metric $g'$ on $X'$ naturally induces a $\Theta$-structure and a $\Theta$-metric on the base manifold $\ol{X}$. 

Let $r\colon \ol{X'}\to[0, \infty)$ be an $S^1$-invariant boundary defining function and $\ol{g'}=r^2g'$ the compactified metric, which is an $S^1$-invariant Lorentzian metric on $\ol{X'}$. Then, since $K$ is $S^1$-invariant and null along the boundary $\mathcal{C}$ with respect to $\ol{g'}$, the $1$-form 
$\bigl(K\lrcorner\, \ol{g'}\bigr)|_\mathcal{C}\in \Gamma(T^*\ol{X'}|_\mathcal{C})$ descends to a section of $T^*\ol X|_M$ whose conformal class is independent of the choice of $r$. Moreover, $K$ satisfies $K\lrcorner\, g^{\rm F}_\th=\th$ on $\mathcal{C}$. Thus, we can define the {\it induced $\Theta$-structure} on $\ol X$ by 
\[
[\Theta]:=\bigl[\bigl(K\lrcorner\, \ol{g'}\bigr)\big|_\mathcal{C}\bigr].
\]

Next, we define a metric on $X$. Since $K$ is timelike on $X'$, the tangent space at each point $x\in X'$ admits the orthogonal decomposition
\[
T_xX'=\mathbb{R}K_x\oplus K_x^\perp
\]
with respect to $g'$. We then define a Riemannian metric $\underline g$ on $X$ by identifying $T_{\pi(x)}X$ with $K_x^\perp$ via the linear isomorphism
\[
\pi_*\colon K_x^\perp\to T_{\pi(x)}X
\]
and transporting the metric $g'|_{K_x^\perp}$ to $T_{\pi(x)}X$.
Since both $g'$ and $K$ are $S^1$-invariant, the resulting metric is independent of the choice of the point $x$ in the fiber.

If we define an $S^1$-invariant $1$-form $\omega$ on $X'$ by 
\[
\omega:=-K\lrcorner\, g',
\]
it satisfies
\[
\omega(K)=1, \quad K^{\perp}={\rm Ker}\,\omega,
\]
and the Poincar\'e metric $g'$ can be written as
\[ 
g'=-\omega^2+\pi^* \underline{g}.
\]
\begin{prop}\label{induced}
The Riemannian metric $\underline{g}$ becomes a $\Theta$-metric on $\ol X$, and the $2$-form $d\omega$ descends to a $\Theta$-$2$-form on $\ol X$.  Moreover, the $\Theta$-metric $\underline{g}$ is an even ACH metric if and only if the Poincar\'e metric $g'$ is even. 
\end{prop}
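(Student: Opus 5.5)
We work throughout in the normal form of $g'$ relative to a fixed representative $g^{\rm F}_\th$. Let $r$ be the special defining function; it is $S^1$-invariant by uniqueness. Then $g'=(4dr^2+g_r)/r^2$ on $\mathcal{C}\times[0,\e)_r$, and $g_r$ is $S^1$-invariant with $g_0=g^{\rm F}_\th$. Locally we use the coframe $(dr,\th,\th^\a,\th^{\ol\a},\sigma)$, extended trivially in $r$, with $\sigma(K)=1$. By $S^1$-invariance every coefficient of $g_r$ is independent of $s$.

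\textbf{Step 1: the form of $\omega$.} Since $\pa_r$ is $g'$-orthogonal to $\mathcal{C}$, the $1$-form $K\lrcorner\, g_r$ has no $dr$ component. Its $\sigma$-coefficient is $g_r(K,K)=r^2g'(K,K)=-r^2$. Hence we may write
\[
\omega=\sigma-\frac{\th_r}{r^2},\qquad \th_r=\th+O(r),
\]
where $\th_r$ is an $r$-dependent combination of $\th,\th^\a,\th^{\ol\a}$ only. The boundary value $\th_0=\th$ holds because $K\lrcorner\, g^{\rm F}_\th=\th$. The $\th^\a$-coefficients of $\th_r$ are $O(r)$, since $g^{\rm F}_\th$ has no $\th^\a\cdot\sigma$ term.

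\textbf{Step 2: $\underline g$ is a $\Theta$-metric and ACH.} Set
\[
\pi^*\underline g=g'+\omega^2=\frac{4dr^2}{r^2}+\frac{g_r}{r^2}+\omega^2,
\]
which annihilates $K$. On the basic coframe $(dr,\th,\th^\a,\th^{\ol\a})$ it is obtained by eliminating $\sigma$ via $\sigma=\omega+\th_r/r^2$. The $\sigma$-dependent part of $g_r/r^2$ is
\[
-\sigma^2+\frac{2}{r^2}\,\sigma\cdot\th_r,
\]
obtained by reading off $K\lrcorner\, g_r$. Completing the square gives
\[
\frac{g_r}{r^2}=-\Bigl(\sigma-\frac{\th_r}{r^2}\Bigr)^2+\frac{\th_r^2}{r^4}+\frac{(g_r)_{\rm basic}}{r^2}.
\]
Therefore
\[
\underline g=\frac{4dr^2}{r^2}+\frac{\th_r^2}{r^4}+\frac{(g_r)_{\rm basic}}{r^2}.
\]

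We now expand this in the standard $\Theta$-coframe \eqref{std-coframe}:
\begin{itemize}
\item The term $\th_r^2/r^4$ contributes $\boldsymbol\th^0\cdot\boldsymbol\th^0$ plus terms that are $O(r)$. Here we use that the $\th^\a$-coefficients of $\th_r$ are $O(r)$.
\item The basic part of $g^{\rm F}_\th$ is $2h_{\a\ol\b}\th^\a\cdot\th^{\ol\b}$. So $(g_r)_{\rm basic}/r^2$ gives $h_{\a\ol\b}+O(r)$, and all its $\th$-components are $O(r^2)$ in the $\Theta$-frame.
\end{itemize}
All coefficients are therefore smooth up to $r=0$, so $\underline g$ is a $\Theta$-metric for the standard $\Theta$-structure on $M\times[0,\e)_r$. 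This structure agrees with the induced $[\Theta]$, since $r^2\cdot(K\lrcorner\, g')|_{\mathcal C}=\th$.

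Reading off the components, we get
\[
g_{\infty\infty}=4,\qquad g_{\infty0}=g_{\infty\a}=0,\qquad g_{00}=1+O(r),\qquad g_{0\a}=O(r),\qquad g_{\a\b}=O(r),\qquad g_{\a\ol\b}=h_{\a\ol\b}+O(r).
\]
Thus $r$ is the model defining function, and Proposition \ref{normal-ACH-condition} gives the ACH property.

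\textbf{Step 3: $d\omega$ is a $\Theta$-$2$-form.} We compute
\[
d\omega=d\sigma+\frac{2}{r}\,\frac{dr}{r}\wedge\frac{\th_r}{r}\cdots-\frac{d\th_r}{r^2}.
\]
More cleanly, since $\th_r/r^2$ is a smooth $\Theta$-$1$-form, it suffices to check the exterior derivatives of the coframe elements:
\begin{itemize}
\item $d(\th/r^2)=-2\,\frac{dr}{r}\wedge\frac{\th}{r^2}+\frac{d\th}{r^2}$, and $d\th/r^2=ih_{\a\ol\b}\boldsymbol\th^\a\wedge\boldsymbol\th^{\ol\b}$ is smooth.
\item $d(\th^\a/r)$ is smooth by the structure equation \eqref{structure-equation}.
\item The derivatives of the coefficient functions produce $r\,\pa_r$ and $rZ_\a$, $r^2T$ applied to smooth functions, which stay smooth.
\end{itemize}
By \eqref{d-sigma}, $d\sigma$ is basic. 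In the $\Theta$-frame it is $O(r^2)$, so it is smooth as well.

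\textbf{Step 4: evenness.} Parity bookkeeping shows that if $g_r$ is even, then:
\begin{itemize}
\item $\th_r$ and $(g_r)_{\rm basic}$ are even in $r$;
\item hence $g_{00},g_{\a\b},g_{\a\ol\b}$ are even and $g_{0\a}$ is odd, the latter because the ratio of $\th^\a/r$ to $\th/r^2$ carries one power of $r$.
\end{itemize}
Conversely, $g_r=r^2(-\omega^2+\pi^*\underline g)-4dr^2$. So evenness of $\underline g$ gives evenness of $g_r$ once we know that $\th_r$ is even.

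I expect this converse to be the main obstacle. The ACH evenness condition constrains $\underline g$, but not $\omega$ separately. My first attempt would be to recover $\th_r$ from $\underline g$ itself: its $\boldsymbol\th^0$-components $g_{00},g_{0\a}$ are exactly the contributions of $\th_r^2/r^4$, corrected by the $\th$-parts of $(g_r)_{\rm basic}$. I would then argue that evenness of $\underline g$ in the normal form, combined with the identification of the model and special defining functions, forces $\th_r$ to be even. If that decomposition is not unique, the equivalence must be understood with $\omega$'s evenness included, and I would check how the paper uses it.
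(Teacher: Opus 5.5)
\textbf{Gap in Step 2: $\underline g$ is not automatically ACH.} Your set-up is the paper's. You use the normal form \eqref{g-prime-normal-form}, write $\omega=\sigma-\th_r/r^2$ with $\th_r=\th+r\psi$ in the paper's notation, and complete the square to reach \eqref{underline-g}. Steps 1 and 3 are fine, and so is the claim that $\underline g$ is a $\Theta$-metric. The error is in the ACH claim. Since $\th^\a/r^2=\boldsymbol\th^\a/r$, an $O(r)$ coefficient of $\th^\a$ in $\th_r$ gives only an $O(1)$ coefficient in the $\Theta$-coframe:
\[
\frac{\th_r}{r^2}=(1+r\psi_0)\boldsymbol\th^0+\psi_\a\boldsymbol\th^\a+\psi_{\ol\a}\boldsymbol\th^{\ol\a}.
\]
Hence $\underline g_{0\a}=\psi_\a+O(r)$, $\underline g_{\a\b}=\psi_\a\psi_\b+O(r)$ and $\underline g_{\a\ol\b}=h_{\a\ol\b}+\psi_\a\psi_{\ol\b}+O(r)$, as in \eqref{g-bar-component}. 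By Proposition \ref{normal-ACH-condition}, $\underline g$ is ACH if and only if $\psi_\a|_M=0$. Equivalently, the $\sigma\cdot\th^\a$-part of $g_r$ must vanish to second order in $r$. This is exactly the point of the paper's proof: ACH is not automatic, but it is implied by evenness of $g_r$, which kills the $O(r)$ term. So your list $g_{0\a}=O(r)$, $g_{\a\b}=O(r)$, $g_{\a\ol\b}=h_{\a\ol\b}+O(r)$ is false in general. In the forward direction of Step 4 you must derive ACH from evenness: $g_r$ even gives $\th_r=\th+O(r^2)$, hence $\psi_\a=O(r)$. With that repair, your parity bookkeeping for ``$g'$ even $\Rightarrow$ $\underline g$ even ACH'' is correct.

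\textbf{The converse as stated is false.} Your suspicion is right, and no argument from $\underline g$ alone can close it, because $\underline g$ does not determine $\omega$. Take any smooth basic $1$-form $\phi$ on $M\times[0,\e)$ with no $dr$-component, and set $g'_\phi:=-(\omega-r\phi)^2+\pi^*\underline g$. This is again an $S^1$-invariant Poincar\'e metric in normal form with the same special defining function. It satisfies $g'_\phi(K,K)=-1$ and $r^2g'_\phi|_{r=0}=g^{\rm F}_\th$. Its induced metric is the same $\underline g$, yet $r^2g'_\phi-r^2g'=-2r\,\th\cdot\phi+O(r^2)$. Starting from an even $g'$ and choosing $\phi|_M\neq0$ therefore gives a non-even $g'_\phi$ whose induced metric is even ACH.

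The paper reads the equivalence off \eqref{underline-g}, but that only justifies ``$g'$ even $\Rightarrow$ $\underline g$ even ACH''. The true converse is the one you isolated: if $\underline g$ is even ACH and $\th_r$ is even (the paper's $\psi$ odd in $r$), then $g'$ is even, by your identity $g_r=r^2(-\omega^2+\pi^*\underline g)-4dr^2$. Where the paper invokes the converse, in the proof of Theorem \ref{normalization-g-prime}, the parity of $\psi'_0$ and $\psi_1$ has to be supplied separately. One way is to check that $F^-=O(r^\infty)$ is compatible with parity when $g$ is even, and then use uniqueness of the formal solution.
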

\begin{proof}
We fix a contact form $\th$ on $M$ and take the special defining function $r\colon \ol{X'}\to [0, \infty)$ determined by $g^{\rm F}_\th$. Since $g'$ and $g^{\rm F}_\th$ are $S^1$-invariant, so is $r$ by uniqueness, and it descends to a defining function of $M\subset \ol X$ satisfying $|dr/r|^2_g=1/4$ near $M$. Moreover, the diffeomorphism $\ol{X'}\cong\mathcal{C}\times 
[0, \epsilon)_r$ is $S^1$-equivariant if we let $S^1$ act on the
 $\mathcal{C}$-component in the right-hand side. Thus, it also induces a diffeomorphism $\ol X\cong M\times [0, \epsilon)_r$.  We extend $\th, \th^\a, \sigma, h_{\a\ol\b}$ to $\mathcal{C}\times [0, \epsilon)_r$ in the trivial way so that $\sigma(K)=1$ holds on $\mathcal{C}\times [0, \epsilon)_r$. 
Then, by the equation $g'(K, K)=-1$ and the $S^1$-invariance of $g'$, we can write $g'$ on $\mathcal{C}\times [0, \epsilon)_r$ in the normal form
\begin{equation}\label{g-prime-normal-form}
g'=4\frac{dr^2}{r^2}+\frac{1}{r^2}\bigl(2h_{\a\ol\b}\th^\a\cdot\th^{\ol\b}+2\theta\cdot \sigma+r G+2r\sigma\cdot \psi-r^2\sigma^2\bigr)
\end{equation}
with some real tensors 
\begin{align*}
G&=2G_{\a\ol\b}\th^\a\cdot\th^{\ol\b}+G_{\a\b}\th^\a\cdot\th^\b+
G_{\ol\a\ol\b}\th^{\ol\a}\cdot\th^{\ol\b} \\
&\quad +2G_{0\a}\th\cdot\th^\a+2G_{0\ol\a}\th\cdot\th^{\ol\a}+G_{00}\th^2,
\\
\psi&=\psi_\a\th^\a+\psi_{\ol\a}\th^{\ol\a}+\psi_0\th
\end{align*}
defined on $M\times [0, \epsilon)_r$. Since $(K\lrcorner\, r^2g')|_{r=0}=\th$,  the induced $\Theta$-structure on $M\times [0, \epsilon)_r$ agrees with the standard $\Theta$-structure. The $1$-form $\omega=-K\lrcorner\, g'$ and the metric 
$\underline{g}=g'+\omega^2$ are given by 
\begin{align}
\omega&=\sigma-\frac{\th}{r^2}-\frac{\psi}{r}, \label{omega} \\
\underline{g}&=4\Bigl(\frac{dr}{r}\Bigr)^2+2h_{\a\ol\b}\frac{\th^\a}{r}\cdot\frac{\th^{\ol\b}}{r}
+\Bigl(\frac{\th}{r^2}\Bigr)^2+\frac{1}{r}G+\Bigl(\frac{\psi}{r}\Bigr)^2+2\frac{\th}{r^2}\cdot\frac{\psi}{r}. \label{underline-g}
\end{align}
Thus, $\underline{g}$ is a $\Theta$-metric with $r$ serving as the model defining function for $\th$, and $d\omega$ is a 
$\Theta$-$2$-form on $M\times[0, \epsilon)_r$. 
The boundary value of  $\underline{g}$ is given by
\begin{align*}
\underline{g}|_M&=4\Bigl(\frac{dr}{r}\Bigr)^2+2h_{\a\ol\b}\frac{\th^\a}{r}\cdot\frac{\th^{\ol\b}}{r}
+\Bigl(\frac{\th}{r^2}\Bigr)^2+\Bigl(\psi_\a|_M \frac{\th^\a}{r}+\psi_{\ol\a}|_M\frac{\th^{\ol\a}}{r}\Bigr)^2 \\
&\quad +2\frac{\th}{r^2}\cdot\Bigl(\psi_\a|_M \frac{\th^\a}{r}+\psi_{\ol\a}|_M\frac{\th^{\ol\a}}{r}\Bigr).
\end{align*}
Thus, by Proposition \ref{normal-ACH-condition}, $\underline{g}$ is an ACH metric with the infinity $(M, H, J)$ if and only if $\psi_\a=O(r)$, which is part of the condition that $g'$ be even. If this holds, \eqref{underline-g} is a normal form ACH metric, and its evenness is equivalent to that of $g'$.
\end{proof}

We call $\underline{g}$ the {\it induced $\Theta$-metric} associated with $g'$. If it is an ACH metric, we call it the {\it induced ACH metric}.

By \eqref{underline-g}, the components of $\underline{g}$ in the $\Theta$-coframe \eqref{std-coframe} are given by 
\begin{equation}\label{g-bar-component}
\begin{aligned}
\underline{g}_{\a\ol\b}&=h_{\a\ol\b}+r G_{\a\ol\b}+\psi_\a\psi_{\ol\b}, \\
\underline{g}_{\a\b}&=r G_{\a\b}+\psi_\a\psi_\b, \ \\
\underline{g}_{0\a}&=r^2G_{0\a}+r\psi_0\psi_\a+\psi_\a,  \\
 \underline{g}_{00}&=1+r^3G_{00}+r^2\psi_0^2+2r\psi_0. 
\end{aligned}
\end{equation}

We define a skew-symmetric 2-tensor $F_{ab}$ on $X'$ by writing $d\omega$ as
\[
d\omega=\frac{1}{2}F_{ab}\th^a\wedge\th^b, \quad F_{ab}=-F_{ba}.
\]
Since $\omega=-K_a \th^a$ and $K$ is a Killing vector field, we have
\[
F_{ab}=-2\nabla'_a K_b,
\]
where $\nabla'$ is the Levi-Civita connection of $g'$. 
By Proposition \ref{induced}, $d\omega$ also determines a $\Theta$-$2$-tensor $F_{ij}=F_{[ij]}$ on $\overline X$ via
\[
d\omega=\frac{1}{2}F_{ij}\boldsymbol{\th}^i\wedge\boldsymbol{\th}^j, 
\]
where $(\boldsymbol{\th}^i)$ is an arbitrary $\Theta$-coframe.
Note that the equation $K^a F_{ab}=0$ can also be seen from the geodesic equation $\nabla'_KK=0$. By \eqref{d-sigma}, we have
\begin{align*}
d\omega&=d\sigma-d\Bigl(\frac{\th}{r^2}\Bigr)-d\Bigl(\frac{\psi}{r}\Bigr) \\
&=2i P_{\a\ol\b}\th^\a\wedge\th^{\ol\b}+C_\a\th^\a\wedge\th+C_{\ol\a}\th^{\ol\a}\wedge\th \\
&\quad -\frac{i}{r^2}h_{\a\ol\b}\th^\a\wedge\th^{\ol\b}+\frac{2}{r^3}dr\wedge\th -\frac{1}{r}d(\psi_\a\th^\a+\psi_{\ol\a}\th^{\ol\a}+\psi_0\th)+\frac{1}{r^2}dr\wedge \psi.
\end{align*}
For later computations, we set
\[
\psi'_0:=r\psi_0.
\]
Then, the components of the $\Theta$-2-tensor $F_{ij}$ with respect to the  $\Theta$-coframe \eqref{std-coframe}
are given by
\begin{equation}\label{Fij}
\begin{aligned}
F_{\a\ol\b}&=2ir^2P_{\a\ol\b}-i(1+\psi'_0)h_{\a\ol\b}-r\bigl(\nabla^{\rm TW}_\a\psi_{\ol\b}-\nabla^{\rm TW}_{\ol\b}\psi_{\a}\bigr), \\
F_{\a\b}&=-r\bigl(\nabla^{\rm TW}_\a\psi_{\b}-\nabla^{\rm TW}_{\b}\psi_{\a}\bigr), \\
F_{\infty0}&=2-(r\pa_r-2)\psi'_0, \\
F_{\infty\a}&=-(r\pa_r-1)\psi_\a, \\
F_{0\a}&=-r^3C_\a-r^2\nabla^{\rm TW}_T \psi_\a-r^2\psi_{\ol\g}A^{\ol\g}{}_\a+r\nabla^{\rm TW}_\a\psi_0'.
\end{aligned}
\end{equation}
The remaining components are obtained from these by skew-symmetry and complex conjugation.

\subsection{Poincar\' e metric associated with the self-dual Einstein ACH metric}
We now specialize to the case in which the CR manifold $M$ is $3$-dimensional. In this case, we can uniquely determine the asymptotic expansion of $g'$ as follows:
\begin{theorem}\label{normalization-g-prime}
Let $(M, H, J)$ be a $3$-dimensional strictly pseudoconvex CR manifold and $(\mathcal{C}, [g^{\mathrm{F}}_\th])$ the Fefferman space over $M$. Then, there exists an $S^1$-invariant even Poincar\'e metric $g'$ on $X':=\mathcal{C}\times(0, \e)_r$ with the conformal infinity $(\mathcal{C}, [g^{\mathrm{F}}_\th])$ which satisfies the following conditions:
\begin{itemize}
\item[(i)] The induced ACH metric $\underline{g}$ on $\ol{X}:=M\times[0, \e)_r$ satisfies the self-dual Einstein equation:
\[
\mathrm{Ric}(\underline{g})+\frac{3}{2}\underline{g}=O(r^\infty), \quad W^-(\underline{g})=O(r^\infty).
\] 
\item[(ii)] The $\Theta$-$2$-tensor $F_{ij}$ on $\ol{X}$ is self-dual with respect to $\underline{g}$: 
\[
F^{-}=O(r^\infty).
\]
\end{itemize}
The metric $g'$ is unique modulo $O(r^\infty)$ up to pull-back by $S^1$-equivariant diffeomorphisms fixing points on $\mathcal{C}$. Moreover, the expansion of each component in the normal form of $g'$ with respect to a representative $g^{\mathrm{F}}_\th$ can be expressed by a universal formula in terms of the curvature and torsion of the Tanaka-Webster connection. 
\end{theorem}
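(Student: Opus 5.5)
Fix a contact form $\th$ and write $g'$ in the normal form \eqref{g-prime-normal-form} with the special defining function $r$. By Proposition \ref{induced}, $g'$ is determined by the pair $(\underline{g},\omega)$ through $g'=-\omega^2+\pi^*\underline{g}$, with $\omega$ as in \eqref{omega}. Moreover, $\underline{g}$ is an even ACH metric already in normal form with model defining function $r$. The strategy is to determine $\underline{g}$ first and then $\omega$.

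\textbf{Step 1 (the base metric).} Theorem \ref{self-dual-ACH} provides an even self-dual Einstein ACH metric. Put it in normal form with respect to $\th$ on $M\times[0,\e)_r$; this fixes the components $g_{ij}$ modulo $O(r^\infty)$ by universal formulas. Condition (i) then forces $\underline{g}_{ij}=g_{ij}$. By \eqref{g-bar-component}, this gives four equations for the unknowns $(G,\psi)$.

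\textbf{Step 2 (determining $\psi$).} The four equations in \eqref{g-bar-component} do not determine $\psi$ and $G$ separately. For instance, the $00$-equation fixes only $r^3G_{00}+r^2\psi_0^2+2r\psi_0$. So $\psi$ must come from condition (ii). Since $\psi'_0=r\psi_0$, the components \eqref{Fij} show that $F$ is an affine first-order expression in $(\psi_\a,\psi'_0)$ with principal part $r\pa_r$ in the normal directions:
\[
F_{\infty 0}=2-(r\pa_r-2)\psi'_0,\qquad F_{\infty\a}=-(r\pa_r-1)\psi_\a .
\]
In four dimensions, $F^-=0$ with respect to $\underline g$ amounts to three real equations: the component $F^-_{\infty0}$, which pairs $F_{\infty0}$ with the trace $h^{\a\ol\b}F_{\a\ol\b}$, and the complex component pairing $F_{\infty\a}$ with $F_{0\ol\b}$ (more precisely, with $\underline g$-corrections).

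These equations are solved inductively in powers of $r$. At order $r^m$, the equations take the form
\[
(m-2)\psi'^{(m)}_0+c\,\psi'^{(m)}_0=\text{(lower order)},\qquad (m-1)\psi^{(m)}_\a+c'\,\psi^{(m)}_\a=\text{(lower order)},
\]
where $c,c'$ arise from the zeroth-order parts of the remaining components, for example $-i\psi'_0h_{\a\ol\b}$ in $F_{\a\ol\b}$. The constants $c,c'$ are read off from the flat model, namely the complex hyperbolic metric with $F=d\omega$ for the Hopf-type fibration. The claim to check is that the indicial roots avoid all positive even (respectively odd) integers permitted by evenness. If so, each coefficient is uniquely solvable, $\psi_\a=O(r)$ holds as required for the ACH property, and parity propagates: $\psi'_0$ is even and $\psi_\a$ is odd.

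The initial term $F_{\a\ol\b}\approx -ih_{\a\ol\b}$, $F_{\infty0}\approx2$ should be self-dual in the model, which fixes $\psi|_{r=0}=0$. Once $\psi$ is known, $G$ is recovered algebraically from \eqref{g-bar-component}. Every coefficient is a universal polynomial in Tanaka-Webster quantities, because the $g_{ij}$ are and the recursion is universal.

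\textbf{Step 3 (existence and the remaining properties).} Evenness of $g'$ follows from Proposition \ref{induced}. For existence, define $g'$ from these $(G,\psi)$ and check that it is a Poincar\'e metric in the required sense.

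\textbf{Step 4 (uniqueness).} Let $g'_1,g'_2$ both satisfy the conditions. Put each in normal form with its special defining function; this is unique and $S^1$-equivariant. The induced ACH metrics are then both self-dual Einstein and in normal form for the same $\th$. By the uniqueness in Theorem \ref{self-dual-ACH}, together with the uniqueness of the normalizing $\Theta$-diffeomorphism germ, they agree modulo $O(r^\infty)$. The recursion in Step 2 then forces the two $\psi$'s to agree, hence the two $G$'s.

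One subtlety remains: the identification $\ol{X'}\cong\mathcal{C}\times[0,\e)$ must be compatible with the trivialization $\sigma(K)=1$. Any $S^1$-equivariant diffeomorphism fixing the boundary can shift the fiber coordinate $s$ by a function. This shift is absorbed by the normal form, since $\omega$ is fixed by $K$ and $\omega(K)=1$. I would verify that this gauge freedom corresponds exactly to pull-back by $S^1$-equivariant diffeomorphisms.

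\textbf{Main obstacle.} The hardest step is Step 2: computing the Hodge star of $\underline g$ on $\Theta$-2-forms to sufficient accuracy and confirming that the linearized operator acting on $(\psi'_0,\psi_\a)$ has no indicial root at any relevant order, with no log terms. I would first do this computation on the model and then argue by perturbation that the corrections from $\underline g-g_{\rm model}=O(r)$ only affect the lower-order terms.
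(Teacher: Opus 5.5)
Your overall plan coincides with the paper's. You take $g$ from Theorem~\ref{self-dual-ACH} in normal form, require $\underline g=g$, determine $(\psi'_0,\psi_1)$ from $F^-=0$ order by order (reducing to $F^-_{\infty0},F^-_{\infty1}$ as in Lemma~\ref{self-dual-lemma}), and then read off $G$ from \eqref{g-bar-component}. However, the step you flag as ``the claim to check'' is false, and this is a genuine gap.

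The constants are $c=2$ and $c'=0$. By Lemma~\ref{F-psi}, modulo negligible terms and $\psi$-independent $O(r^3)$ terms, $2F^-_{\infty0}\equiv-r\pa_r\psi'_0$ and $2F^-_{\infty1}\equiv-(r\pa_r-1)\psi_1$. So $\psi_1$ has indicial root $1$, which is an odd power permitted by evenness. Concretely, $\psi_1=a_1r$ is annihilated by $r\pa_r-1$. Its remaining contributions to $F^-_{\infty0},F^-_{\infty1}$ are $O(r^2)$, for example through $-r(\nabla^{\rm TW}_1\psi_{\ol1}-\nabla^{\rm TW}_{\ol1}\psi_1)$ in $F_{1\ol1}$. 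These are absorbed by higher coefficients: adding $\phi^{(m+1)}_0r^{m+1}$ and $\phi^{(m+1)}_1r^{m+1}$ changes $2F^-_{\infty0}$ and $2F^-_{\infty1}$ by $-(m+1)\phi^{(m+1)}_0r^{m+1}$ and $-m\phi^{(m+1)}_1r^{m+1}$ modulo $O(r^{m+2})$, both nonzero for $m\ge1$. Hence condition (ii) leaves the $r^1$-coefficient of $\psi_1$ free. This remains true even with $\psi|_{r=0}=0$, parity, and the ACH requirement $\psi_1=O(r)$. Your uniqueness argument in Step 4 (``the recursion forces the two $\psi$'s to agree'') fails at exactly this order, and so does the existence claim that ``$G$ is recovered algebraically''.

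The missing input is that \eqref{g-bar-component} does constrain $\psi$ at low order, contrary to your assertion in Step 2. The reason is that $G$ enters $\underline g_{0\a}$ and $\underline g_{00}$ only through $r^2G_{0\a}$ and $r^3G_{00}$. Solving $r^2G_{01}=g_{01}-\psi_1(1+\psi'_0)$ and $r^3G_{00}=g_{00}-1-2\psi'_0-(\psi'_0)^2$ with smooth $G$, using $g_{01}=O(r^3)$ and $g_{00}=1+O(r^3)$ from \eqref{lower-order}, forces $\psi_1=O(r^2)$ and $\psi'_0=O(r^3)$. These are precisely the divisibility conditions hidden in your ``recovered algebraically''.

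The paper therefore first imposes the initial condition \eqref{psi-initial}, $\psi'_0=O(r^2)$ and $\psi_1=O(r^2)$, derived from $\underline g=g$. Only then does it run the induction, starting at $m=2$, where all coefficients are nonzero. The $m=2$ step gives $\phi^{(2)}_0=0$, i.e.\ $\psi'_0=O(r^3)$, which is what makes $G_{00}$ solvable. If you insert this initial condition before the recursion, the rest of your outline goes through: uniqueness of $\psi$, parity via uniqueness, recovery of $G$, and uniqueness via the $S^1$-equivariant normal form.
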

To prove this theorem, we proceed as follows: First let $g$ be  
the self-dual Einstein even ACH metric on $\ol{X}$ provided by Theorem \ref{self-dual-ACH}. We show that the expansions of $\psi'_0(=r\psi_0), \psi_1$ are uniquely determined by the self-duality of $F_{ij}$ with respect to $g$ (rather than $\underline{g}$) together with the initial condition \eqref{psi-initial} below. We then determine the expansion of the tensor $G$ by imposing the equation $\underline{g}=g$.
\medskip

We work in a local $\Theta$-frame \eqref{std-frame} on $\ol{X}$ with $(Z_1)$ normalized by $h_{1\ol1}=1$. Then, for $n=1$, the equations \eqref{Fij}  reduce to
\begin{equation}\label{Fij-three-dim}
\begin{aligned}
F_{1\ol1}&=-\frac{i}{2}(2-r^2 R+2\psi'_0)-r\bigl(\nabla^{\rm TW}_1\psi_{\ol1}-\nabla^{\rm TW}_{\ol1}\psi_1\bigr), \\
F_{\infty0}&=2-(r\pa_r-2)\psi'_0, \\
F_{\infty1}&=-(r\pa_r-1)\psi_1, \\
F_{01}&=-r^3C_1-r^2\nabla^{\rm TW}_T \psi_1-r^2\psi_{\ol1}A^{\ol1}{}_1+r\nabla^{\rm TW}_1\psi_0'.
\end{aligned}
\end{equation}

We recall that the lower-order expansions of the components $g_{ij}, g^{ij}$ are given by 
\begin{equation}\label{lower-order}
\begin{aligned}
g_{00}&=1+O(r^3),  &  g_{01}&=O(r^3), \\ 
g_{1\ol1}&=1-\frac{r^2}{2}R+O(r^3), &  g_{11}&=-2ir^2A_{11}+O(r^3)
\end{aligned}
\end{equation}
and 
\begin{equation*}
\begin{aligned}
g^{00}&=1+O(r^3), & g^{01}&=O(r^3), \\
g^{1\ol1}&=1+\frac{r^2}{2}R+O(r^3), & g^{11}&=-2ir^2A_{\ol1\ol1}+O(r^3);
\end{aligned}
\end{equation*}
see \cite[(7, 3)]{Mat1} or \cite[Proposition 4.1]{Mat2}.  
If $g'$ satisfies the condition (i) in Theorem \ref{normalization-g-prime}, we have $\underline{g}=g+O(r^\infty)$ by uniqueness. Thus, by \eqref{g-bar-component} and \eqref{lower-order}, $\psi'_0$ and $\psi_1$ must satisfy 
\begin{equation}\label{psi-initial}
\psi'_0=O(r^2), \quad \psi_1=O(r^2).
\end{equation}

We choose the orientation of $\ol X$ so that 
\[  
i\,\mbox{\boldmath $\th$}^0\wedge
\mbox{\boldmath $\th$}^1\wedge\mbox{\boldmath $\th$}^{\ol1}\wedge\mbox{\boldmath $\th$}^\infty>0
\]
for the $\Theta$-coframe \eqref{std-coframe}, and write the volume form of $g$ as
\[
vol_g=\frac{1}{4!}\varepsilon_{ijkl}\mbox{\boldmath $\th$}^i\wedge
\mbox{\boldmath $\th$}^j\wedge\mbox{\boldmath $\th$}^k\wedge\mbox{\boldmath $\th$}^l
\]
with a skew-symmetric $\Theta$-tensor $\varepsilon_{ijkl}=\varepsilon_{[ijkl]}$. 
\begin{prop} With respect to the $\Theta$-frame \eqref{std-frame} with $h_{1\ol1}=1$, we have
\begin{equation}\label{volume-form}
\begin{aligned}
\varepsilon_{01\ol1\infty}&=2i-ir^2R+O(r^3), &
\varepsilon_{\infty 1}{}^{01}&=2i+O(r^3), \\
\varepsilon_{\infty 1}{}^{0\ol1}&=-4r^2A_{11}+O(r^3), & \varepsilon_{\infty1}{}^{1\ol1}&=O(r^3), \\
\varepsilon_{\infty 0}{}^{1\ol1}&=2i+ir^2R+O(r^3), & 
\varepsilon_{\infty0}{}^{01}&=O(r^3).
\end{aligned}
\end{equation}
\end{prop}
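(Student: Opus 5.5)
The plan is a direct computation from the lower-order expansions \eqref{lower-order} of $g_{ij}$ and $g^{ij}$. First we obtain $\varepsilon_{01\ol1\infty}$ from the determinant of $g$ in the complex $\Theta$-frame. Then every other component follows by raising two indices with $g^{ij}$ and using the skew-symmetry of $\varepsilon$.

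\emph{Step 1: the component $\varepsilon_{01\ol1\infty}$.} For a metric in a (complex) frame, the volume form evaluated on the frame equals a constant times $\sqrt{\det(g_{ij})}$. The constant depends only on the frame type and the orientation convention $i\,\boldsymbol{\th}^0\wedge\boldsymbol{\th}^1\wedge\boldsymbol{\th}^{\ol1}\wedge\boldsymbol{\th}^\infty>0$.
\begin{itemize}
\item To fix the constant, I would use the model values $g_{\infty\infty}=4$, $g_{00}=1$, $g_{1\ol1}=1$, $g_{11}=g_{01}=g_{\infty j}=0$. Pass to a real orthonormal coframe, with $\boldsymbol{\th}^1=(e^1+ie^2)/\sqrt2$ or similar, and track the factor relating $i\,\boldsymbol{\th}^1\wedge\boldsymbol{\th}^{\ol1}$ to $e^1\wedge e^2$. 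This should give $\varepsilon_{01\ol1\infty}=2i$ when $r=0$.
\item For the general case, we have $g_{\infty\infty}=4$ and $g_{\infty j}=0$. Hence $\det(g_{ij})$ equals $4$ times the $3\times3$ determinant in the indices $(0,1,\ol1)$.
\item Since $g_{01}=O(r^3)$ and $g_{11}g_{\ol1\ol1}=O(r^4)$, this $3\times3$ determinant equals $g_{00}g_{1\ol1}^2+O(r^3)=1-r^2R+O(r^3)$. Its square root is $1-\tfrac12 r^2R+O(r^3)$.
\item Therefore $\varepsilon_{01\ol1\infty}=2i-ir^2R+O(r^3)$.
\end{itemize}

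\emph{Step 2: components with index $\infty$ first and two raised indices.} By skew-symmetry, the only nonzero components of $\varepsilon_{\infty1ab}$ have $\{a,b\}=\{0,\ol1\}$. Likewise, the only nonzero components of $\varepsilon_{\infty0ab}$ have $\{a,b\}=\{1,\ol1\}$. Both are $\pm\varepsilon_{01\ol1\infty}$, with the sign given by the parity of the permutation. We then compute each component by expanding the raised indices:
\begin{itemize}
\item $\varepsilon_{\infty1}{}^{01}=g^{00}g^{1\ol1}\varepsilon_{\infty10\ol1}+g^{0\ol1}g^{10}\varepsilon_{\infty1\ol10}$. The second term is $O(r^3)$. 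The first term gives $(1+\tfrac12 r^2R)(2i-ir^2R)=2i+O(r^3)$.
\item $\varepsilon_{\infty1}{}^{0\ol1}$: the upper $\ol1$ must contract with a lower $\ol1$, which forces the factor $g^{\ol1\ol1}=\ol{g^{11}}=2ir^2A_{11}+O(r^3)$. Multiplying by $g^{00}\varepsilon_{\infty10\ol1}=\pm2i+O(r^2)$ gives $-4r^2A_{11}$ once the sign is fixed.
\item $\varepsilon_{\infty1}{}^{1\ol1}$ and $\varepsilon_{\infty0}{}^{01}$: every term contains a factor $g^{01}$ or $g^{0\ol1}$, so both are $O(r^3)$.
\item $\varepsilon_{\infty0}{}^{1\ol1}$: the main term is $(g^{1\ol1})^2\varepsilon_{\infty0\ol11}$, and the correction $g^{11}g^{\ol1\ol1}=O(r^4)$. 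This gives $(1+r^2R)\cdot 2i(1-\tfrac12 r^2R)=2i+ir^2R+O(r^3)$.
\end{itemize}

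\emph{Main obstacle.} The only real difficulty is bookkeeping: fixing the overall normalization constant and orientation sign in Step 1, and the permutation signs in Step 2. I would pin these down by checking everything against the exact complex hyperbolic model, where all the $O(r^2)$ terms vanish.
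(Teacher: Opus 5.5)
Your proposal is correct and is essentially the paper's argument. The paper likewise computes $\det(g_{ij})$ in the $\Theta$-frame to get $vol_g=|\det(g_{ij})|^{1/2}\,i\,\boldsymbol{\th}^0\wedge\boldsymbol{\th}^1\wedge\boldsymbol{\th}^{\overline1}\wedge\boldsymbol{\th}^\infty$, then raises two indices with the expansions of $g^{ij}$; the permutation signs you left open are all $+$, since $(\infty,1,0,\overline1)$ and $(\infty,0,\overline1,1)$ are even permutations of $(0,1,\overline1,\infty)$. One small slip: because the $g_{1\overline1}$ entries sit off the diagonal, the $(0,1,\overline1)$ block has determinant $-g_{00}g_{1\overline1}^2+O(r^3)$, so in fact $\det(g_{ij})=-4(1-\tfrac12r^2R)^2+O(r^3)=\varepsilon_{01\overline1\infty}^{\,2}$, but you fix the constant from the model, so this sign is absorbed and your result is unaffected.
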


\begin{proof}
With respect to the $\Theta$-frame $(\mbox{\boldmath $Z$}_i)$, we have
\begin{align*}
\det(g_{IJ})
&=\det \begin{pmatrix}
4 & 0 & 0 & 0 \\
0 & 1 & 0 & 0 \\
0 & 0 & -2ir^2A_{11} & 1-\frac{r^2}{2}R \\
0 & 0 &  1-\frac{r^2}{2}R & 2ir^2A_{\ol1\ol1} 
\end{pmatrix}+O(r^3)  \\
&=-4\Bigl(1-\frac{r^2}{2}R\Bigr)^2+O(r^3)
\end{align*}
and hence
\begin{align*}
vol_g&=|\det(g_{IJ})|^{1/2} i\,\mbox{\boldmath $\th$}^0\wedge
\mbox{\boldmath $\th$}^1\wedge\mbox{\boldmath $\th$}^{\ol1}\wedge\mbox{\boldmath $\th$}^\infty \\
&=i\bigl(2-r^2 R+O(r^3)\bigr)\mbox{\boldmath $\th$}^0\wedge
\mbox{\boldmath $\th$}^1\wedge\mbox{\boldmath $\th$}^{\ol1}\wedge\mbox{\boldmath $\th$}^\infty.
\end{align*}
Thus, we obtain the first equation in \eqref{volume-form}. By raising the indices, we have
\begin{align*}
\varepsilon_{\infty 1}{}^{01}&=g^{00}g^{1\ol1}\varepsilon_{\infty10\ol1}+O(r^3) =\Bigl(1+\frac{r^2}{2}R\Bigr)(2i-ir^2 R)+O(r^3) \\
&=2i+O(r^3), \\
\varepsilon_{\infty 1}{}^{0\ol1}&=
g^{00}g^{\ol1\ol1}\varepsilon_{\infty10\ol1}+O(r^3) 
=2ir^2 A_{11}(2i-ir^2 R)+O(r^3) \\
&=-4r^2A_{11}+O(r^3), \\
\varepsilon_{\infty1}{}^{1\ol1}&=O(r^3), \\
\varepsilon_{\infty 0}{}^{1\ol1}&=g^{1\ol1}g^{\ol1 1}\varepsilon_{\infty0\ol11}+g^{11}g^{\ol1\ol1}\varepsilon_{\infty01\ol1} =\Bigl(1+\frac{r^2}{2}R\Bigr)^2(2i-ir^2 R)+O(r^3) \\
&=2i+ir^2 R+O(r^3), \\
\varepsilon_{\infty0}{}^{01}&=O(r^3).
\end{align*}
\end{proof}
The anti self-dual part of $F_{ij}$ with respect to $g$ is defined by 
\[
F^-_{ij}:=\frac{1}{2}(F_{ij}-{*F}_{ij})\quad  {\rm with}\quad  {*F}_{ij}:=\frac{1}{2}\varepsilon_{ij}{}^{kl}F_{kl}.
\]
Since ${*F}^-_{ij}=-F^-_{ij}$, we have
\begin{align*}
F^-_{01}&=-\frac{1}{2}\varepsilon_{01}{}^{kl}F^-_{kl}\equiv-\varepsilon_{01}{}^{1\infty}F^-_{1\infty}\equiv -\frac{i}{2}F^-_{1\infty}, \\
F^-_{1\ol1}&=-\frac{1}{2}\varepsilon_{1\ol1}{}^{kl}F^-_{kl}
\equiv -\varepsilon_{1\ol1}{}^{0\infty}F^-_{0\infty}\equiv
-\frac{i}{2}F^-_{0\infty} \qquad {\rm mod}\ O(r)\cdot F^-_{ij}.
\end{align*}
We therefore obtain the following lemma.
\begin{lem}\label{self-dual-lemma}
If $F^-_{\infty0}, F^-_{\infty1}=O(r^\infty)$, then $F^-_{ij}=O(r^\infty)$.
\end{lem}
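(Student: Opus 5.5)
The argument is a linear-algebra bootstrap based on the two congruences displayed just before the lemma. The space of anti-self-dual $\Theta$-$2$-forms has real rank $3$; complexified, it is spanned by the components $F^-_{\infty0}$, $F^-_{\infty1}$, $F^-_{\infty\ol1}$. The remaining components $F^-_{01}$, $F^-_{0\ol1}$, $F^-_{1\ol1}$, $F^-_{11}$ should be recovered from these through the identity ${*F}^-=-F^-$. I would first make this quantitative to all orders rather than just modulo $O(r)$.

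Write the anti-self-duality relation as $F^-_{ij}=-\frac12\varepsilon_{ij}{}^{kl}F^-_{kl}$. Apply it with $(i,j)=(0,1),(1,\ol1),(1,1)$, and also with the conjugate index pairs. On the right-hand side, split the terms into two groups. The first group contains the components with an $\infty$ index. The second group contains only the "tangential" components $F^-_{01},F^-_{0\ol1},F^-_{1\ol1},F^-_{11},F^-_{\ol1\ol1}$. By \eqref{volume-form}, together with the expansions of $g^{ij}$ in \eqref{lower-order}, the $\varepsilon$-coefficients in the second group are $O(r)$. Indeed, each such coefficient $\varepsilon_{ij}{}^{kl}$ with all of $i,j,k,l\neq\infty$ vanishes at $r=0$. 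At $r=0$ the metric is block diagonal, $g^{\infty\infty}=1/4$ with $g^{\infty k}=0$, so a nonzero $\varepsilon$-component must contain exactly one $\infty$ index. Lowering or raising indices with $g$ preserves this up to $O(r)$ corrections, because $g_{\infty i}=0$ exactly in normal form.

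We therefore obtain a linear system of the form
\[
\mathbf{v}=A\,\mathbf{u}+r\,B\,\mathbf{v},
\]
where $\mathbf{v}$ is the vector of tangential anti-self-dual components, $\mathbf{u}=(F^-_{\infty0},F^-_{\infty1},F^-_{\infty\ol1})$, and $A,B$ are smooth matrices up to $r=0$ built from $\varepsilon$ and $g^{-1}$. Since $F$ is real, $F^-$ is real, so $F^-_{\infty\ol1}=\ol{F^-_{\infty1}}$, and the hypothesis gives $\mathbf{u}=O(r^\infty)$.

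Iteration then finishes the proof. From $\mathbf{v}=O(r^\infty)+rB\mathbf{v}$ and $\mathbf{v}=O(1)$, we get $\mathbf{v}=O(r)$, then $O(r^2)$, and so on; hence $\mathbf{v}=O(r^\infty)$. Equivalently, $I-rB$ is invertible near $r=0$, so $\mathbf{v}=(I-rB)^{-1}A\mathbf{u}=O(r^\infty)$. Together with $\mathbf{u}=O(r^\infty)$, this gives $F^-_{ij}=O(r^\infty)$ for all index pairs.

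The only point needing care is the structural claim that the tangential-to-tangential $\varepsilon$-coefficients vanish at $r=0$ for every pair, including $(1,1)$, and not merely for the two pairs computed in the text. For the pair $(1,1)$ this is the statement that $F^-_{11}$ has leading coefficient involving only $\infty$-components. I would check this directly from the boundary values $g_{ij}|_{r=0}=\mathrm{diag}(4,1)\oplus\begin{pmatrix}0&1\\1&0\end{pmatrix}$ on the $(\infty,0,1,\ol1)$ indices, together with $\varepsilon_{01\ol1\infty}|_{r=0}=2i$.
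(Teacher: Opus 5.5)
Your proposal is correct and is essentially the paper's argument: the paper uses ${*F}^-=-F^-$ to get $F^-_{01}\equiv-\frac{i}{2}F^-_{1\infty}$ and $F^-_{1\ol1}\equiv-\frac{i}{2}F^-_{0\infty}$ modulo $O(r)\cdot F^-_{ij}$, and leaves implicit the bootstrap you spell out. The point you flag as needing care is vacuous, since $F^-_{11}=0$ by skew-symmetry when $n=1$. Moreover, the tangential-to-tangential coefficients $\varepsilon_{ij}{}^{kl}$ vanish identically, not just at $r=0$, because $g_{\infty i}=0$ exactly in normal form; so $B=0$ and no iteration is actually needed.
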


We call terms of the form
\begin{equation*}
O(r)\cdot(r\pa_r)^l \mathcal{D}\psi'_0, \quad O(r)\cdot(r\pa_r)^l \mathcal{D}\psi_1,\quad O(r)\cdot (r\pa_r)^l \mathcal{D}\psi_{\ol1}
\end{equation*}
{\it negligible terms}, where $l\ge 0$ and $\mathcal{D}$ is an $r$-dependent linear differential operator on $M$. Such terms remain unchanged modulo $O(r^{m+1})$ when $\psi'_0, \psi_1$ are perturbed by   terms of order $O(r^m)$. For example, by \eqref{Fij-three-dim} we have 
\[
F_{1\ol1}\equiv-\frac{i}{2}(2-r^2 R+2\psi'_0), \quad F_{01}\equiv-r^3C_1
\]
modulo negligible terms.
\begin{lem}\label{F-psi}
Modulo negligible terms, we have
\begin{equation}\label{F-psi-eq}
\begin{aligned}
2F^-_{\infty0}&\equiv -r\pa_r \psi'_0+O(r^3), \\
2F^-_{\infty1}&\equiv -(r\pa_r-1)\psi_1+O(r^3),
\end{aligned}
\end{equation}
where the terms $O(r^3)$ in the right-hand sides do not involve $\psi_0', \psi_1$.
\end{lem}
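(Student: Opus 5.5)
The plan is to compute $F^-_{\infty0}$ and $F^-_{\infty1}$ directly from the definition
\[
2F^-_{ij}=F_{ij}-\tfrac12\varepsilon_{ij}{}^{kl}F_{kl},
\]
using the component formulas \eqref{Fij-three-dim} and the volume form expansions \eqref{volume-form}. We work throughout modulo negligible terms. Any contribution in which a component $F_{kl}$ involving $\psi'_0$ or $\psi_1$ is multiplied by an $\varepsilon$-coefficient that is $O(r)$ (for instance $O(r^2)$ or $O(r^3)$) is negligible and may be dropped. This is also true of the derivative terms $r\nabla^{\rm TW}\psi$ appearing in $F_{1\ol1}$ and $F_{01}$, since each carries an explicit factor of $r$.

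For $F^-_{\infty0}$ we expand
\[
\tfrac12\varepsilon_{\infty0}{}^{kl}F_{kl}=\varepsilon_{\infty0}{}^{1\ol1}F_{1\ol1}+\varepsilon_{\infty0}{}^{01}F_{01}+\varepsilon_{\infty0}{}^{0\ol1}F_{0\ol1}+(\text{terms with }\infty).
\]
The $F_{01},F_{0\ol1}$ terms are $O(r^3)\cdot O(r^3)$ modulo negligible terms. The terms with an upper index $\infty$ carry coefficients $\varepsilon_{\infty0}{}^{\infty k}=0$, because $\varepsilon$ is skew and $g^{\infty j}=\tfrac14\delta^{\infty j}$ in normal form. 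Using $\varepsilon_{\infty0}{}^{1\ol1}=2i+ir^2R+O(r^3)$ and $F_{1\ol1}\equiv-\tfrac i2(2-r^2R+2\psi'_0)$, we get
\[
\varepsilon_{\infty0}{}^{1\ol1}F_{1\ol1}\equiv(1+\tfrac{r^2}{2}R)(2-r^2R+2\psi'_0)+O(r^3)\equiv 2+2\psi'_0+O(r^3),
\]
where $r^2R\psi'_0$ is negligible. Since $F_{\infty0}=2-(r\pa_r-2)\psi'_0$, it follows that
\[
2F^-_{\infty0}\equiv 2-(r\pa_r-2)\psi'_0-2-2\psi'_0=-r\pa_r\psi'_0+O(r^3).
\]
The important point is that the constant $2$ and the $r^2R$ terms cancel exactly, leaving an $O(r^3)$ error free of $\psi$.

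For $F^-_{\infty1}$ the relevant terms are
\[
\tfrac12\varepsilon_{\infty1}{}^{kl}F_{kl}=\varepsilon_{\infty1}{}^{01}F_{01}+\varepsilon_{\infty1}{}^{0\ol1}F_{0\ol1}+\varepsilon_{\infty1}{}^{1\ol1}F_{1\ol1}.
\]
Modulo negligible terms these are $2i\cdot(-r^3C_1)$, $O(r^2)\cdot O(r^3)$, and $O(r^3)\cdot(\text{bounded})$, since $\varepsilon_{\infty1}{}^{1\ol1}=O(r^3)$ and $\psi'_0$ multiplied by it is negligible. All three are $O(r^3)$ and free of $\psi$. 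Hence $2F^-_{\infty1}\equiv-(r\pa_r-1)\psi_1+O(r^3)$.

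The main obstacle is justifying the bookkeeping of the $\varepsilon$ components. One must check that no component with an upper $\infty$ index contributes, and that the listed expansions \eqref{volume-form} cover every remaining term, including conjugates such as $\varepsilon_{\infty0}{}^{0\ol1}=O(r^3)$ obtained by conjugation symmetry. One must also confirm that the $O(r^3)$ remainders coming from $g_{ij}$ (which depend only on $g$, not on $\psi$) genuinely do not involve $\psi'_0,\psi_1$. This holds because $g$ is the fixed self-dual Einstein metric of Theorem \ref{self-dual-ACH}, chosen independently of $\psi$.
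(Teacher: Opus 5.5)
Your proposal is correct and follows the paper's proof essentially verbatim: you expand $2F^-_{ij}=F_{ij}-\frac12\varepsilon_{ij}{}^{kl}F_{kl}$ using \eqref{Fij-three-dim} and \eqref{volume-form}, and you observe that the constant $2$ and the $r^2R$ terms cancel in the $\infty0$ component, while every other $\psi$-dependent contribution (besides the leading terms of $F_{\infty0}$ and $F_{\infty1}$) carries a factor of $r$ and is therefore negligible. Your extra remarks spell out what the paper leaves implicit: the components $\varepsilon_{\infty j}{}^{\infty k}$ vanish because $g^{\infty j}=\frac14\delta^{\infty j}$, and the $O(r^3)$ remainders do not involve $\psi'_0,\psi_1$ because $\varepsilon$ is taken with respect to the fixed metric $g$.
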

\begin{proof}
By \eqref{Fij-three-dim} and \eqref{volume-form}, we have
\begin{align*}
2F^-_{\infty0}&=F_{\infty0}-\frac{1}{2}\varepsilon_{\infty0}{}^{kl}F_{kl} \\
&=F_{\infty0}-\varepsilon_{\infty0}{}^{01}F_{01}-\varepsilon_{\infty0}{}^{0\ol1}F_{0\ol1}-\varepsilon_{\infty0}{}^{1\ol1}F_{1\ol1} \\
&\equiv 2-(r\pa_r-2)\psi'_0-(2i+ir^2R)\Bigl(-i+\frac{i}{2}r^2R-i\psi'_0\Bigr)+O(r^3) \\
&\equiv -r\pa_r \psi'_0+O(r^3)
\end{align*}
and 
\begin{align*}
2F^-_{\infty1}&=F_{\infty1}-\frac{1}{2}\varepsilon_{\infty1}{}^{kl}F_{kl} \\
&=F_{\infty1}-\varepsilon_{\infty1}{}^{01}F_{01}-\varepsilon_{\infty1}{}^{0\ol1}F_{0\ol1}-\varepsilon_{\infty1}{}^{1\ol1}F_{1\ol1} \\
&\equiv -(r\pa_r-1)\psi_1+O(r^3)
\end{align*}
modulo negligible terms.
\end{proof}

By Lemma \ref{self-dual-lemma} and Lemma \ref{F-psi}, we obtain the following proposition:

\begin{prop}
There exist $\psi'_0, \psi_1$ which satisfy \eqref{psi-initial} and 
\[
F^-_{ij}=O(r^{\infty})
\]
with respect to $g$. Such $\psi'_0, \psi_1$ are unique modulo $O(r^\infty)$. Moreover, the expansions of $\psi'_0, \psi_1$ are expressed by universal formulas in terms of Tanaka-Webster curvature quantities. 
\end{prop}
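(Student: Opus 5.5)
By Lemma \ref{self-dual-lemma}, it suffices to arrange $F^-_{\infty0}=O(r^\infty)$ and $F^-_{\infty1}=O(r^\infty)$. I would solve these two equations order by order in $r$, using the structure exhibited in Lemma \ref{F-psi}. The indicial operators there are $r\pa_r$ acting on $\psi'_0$ and $r\pa_r-1$ acting on $\psi_1$. On a term $c\,r^m$ they act as multiplication by $m$ and $m-1$ respectively. Both are invertible for $m\ge 2$, which is exactly the range allowed by the initial condition \eqref{psi-initial}.

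The induction runs as follows. Suppose $\psi'_0,\psi_1$ have been found with $\psi'_0,\psi_1=O(r^2)$ and $F^-_{\infty0},F^-_{\infty1}=O(r^{m})$ for some $m\ge 3$. For the base case $m=3$, take $\psi'_0=\psi_1=0$; then Lemma \ref{F-psi} gives $F^-=O(r^3)$, because negligible terms built from zero vanish. Now perturb $\psi'_0\mapsto\psi'_0+a\,r^m$ and $\psi_1\mapsto\psi_1+b\,r^m$, where $a$ and $b$ are, respectively, a real function on $M$ and a coefficient of the same type as $\psi_1$ (its conjugate perturbing $\psi_{\ol1}$). By the defining property of negligible terms, these terms change only modulo $O(r^{m+1})$. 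The $O(r^3)$ remainders in \eqref{F-psi-eq} do not involve $\psi'_0,\psi_1$ at all. Hence
\[
2F^-_{\infty0}\mapsto 2F^-_{\infty0}-m\,a\,r^m+O(r^{m+1}),\qquad 2F^-_{\infty1}\mapsto 2F^-_{\infty1}-(m-1)\,b\,r^m+O(r^{m+1}).
\]
Choosing $a$ and $b$ to cancel the $r^m$-coefficients is possible, and only possible, in a unique way, since $m,m-1\neq0$. The process is then repeated, and a Borel summation produces $\psi'_0,\psi_1$ with $F^-=O(r^\infty)$.

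Three points need care. First, the self-duality equations must be compatible with $\psi'_0$ being real. Since $F$ is real, $F_{\infty0}$ is real and $\varepsilon_{\infty0}{}^{kl}F_{kl}$ is real up to the factors already displayed. So the $r^m$-coefficient of $F^-_{\infty0}$ should be real, allowing a real $a$; I would check this through the conjugation symmetry of $\varepsilon$ and $F$. Second, $F^-_{\infty\ol1}$ is the conjugate of $F^-_{\infty1}$, so it is handled automatically. Third, I must verify that the proof of Lemma \ref{F-psi} really shows that the $\psi$-dependence of $F^-_{\infty0}$ and $F^-_{\infty1}$ is the leading indicial term plus negligible terms. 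This uses \eqref{Fij-three-dim} and the fact that all off-diagonal entries of $\varepsilon$ in \eqref{volume-form} are $O(r)$ or better. I expect this to be the main technical point, particularly the cross terms $\varepsilon_{\infty0}{}^{01}F_{01}$, which contain $r\nabla^{\rm TW}_1\psi'_0$ multiplied by an $O(r^3)$ factor, and are therefore negligible.

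Uniqueness follows from the same computation. If two solutions first differ at order $r^m$ with $m\ge2$, then the difference of their $F^-$'s has the nonzero leading term $-m\,a\,r^m$ or $-(m-1)\,b\,r^m$, a contradiction. Universality follows by induction as well. The coefficients of $g$ are universal in the Tanaka-Webster curvature and torsion by Theorem \ref{self-dual-ACH}. So are $C_1$ and $R$, and the operations involved (the formulas \eqref{Fij-three-dim}, contraction with $\varepsilon$, and division by $m$ or $m-1$) preserve universality. Hence each Taylor coefficient of $\psi'_0$ and $\psi_1$ is given by a universal formula.
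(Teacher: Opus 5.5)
Your proposal is correct and follows the paper's route: reduce to $F^-_{\infty0},F^-_{\infty1}$ via Lemma \ref{self-dual-lemma}, solve order by order with the indicial factors from Lemma \ref{F-psi}, and conclude with Borel's lemma (your factors $m$, $m-1$ at order $r^m$ are the paper's $m+1$, $m$ at order $r^{m+1}$). The differences are cosmetic: you start from $\psi'_0=\psi_1=0$ and get the vanishing of the $r^2$ coefficients from your separate uniqueness argument, whereas the paper builds this into its base case, and your reality check on the coefficients of $\psi'_0$ makes explicit a point the paper leaves implicit.
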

\begin{proof} 
By Lemma \ref{self-dual-lemma}, it suffices to consider the equations $F^-_{\infty0}, F^-_{\infty1}=O(r^\infty)$. For $m\ge 2$, we prove by induction that there exist unique
\[
\psi'^{(m)}_0=\sum_{k=2}^m \phi^{(k)}_0r^k, \quad \psi^{(m)}_1=\sum_{k=2}^m\phi^{(k)}_1r^k
\] 
which satisfy $F^-_{\infty0}, F^-_{\infty1}=O(r^{m+1})$ and that the coefficients $\phi^{(k)}_0, \phi^{(k)}_1$ are expressed in terms of Tanaka-Webster curvature quantities. For $m=2$, by \eqref{psi-initial} and \eqref{F-psi-eq}, we must have $\phi^{(2)}_0= \phi^{(2)}_1=0$ to obtain $F^-_{\infty0}, F^-_{\infty1}=O(r^3)$ since negligible terms are $O(r^3)$. Suppose that we obtain $\psi'^{(m)}_0, \psi^{(m)}_1$ such that $F^-_{\infty0}, F^-_{\infty1}=O(r^{m+1})$. We note that $(r^{-m-1}F^-_{\infty0})|_M$, $(r^{-m-1}F^-_{\infty1})|_M$ are written in terms of Tanaka-Webster curvature quantities by \eqref{Fij-three-dim}. If we add $\phi^{(m+1)}_0r^{m+1}$, 
$\phi^{(m+1)}_1r^{m+1}$ to $\psi'^{(m)}_0, \psi^{(m)}_1$, the changes of $F^-_{\infty0}, F^-_{\infty1}$ are given by 
\begin{align*}
2\delta F^-_{\infty0}&= -(m+1) \phi^{(m+1)}_0r^{m+1}+O(r^{m+2}), \\
2\delta F^-_{\infty1}&= -m\phi^{(m+1)}_1r^{m+1}+O(r^{m+2})
\end{align*}
by \eqref{F-psi-eq}. Hence, we have unique $\phi^{(m+1)}_0, \phi^{(m+1)}_1$ so that  $F^-_{\infty0}, F^-_{\infty1}=O(r^{m+2})$ and these coefficients are written in terms of Tanaka-Webster curvature quantities. Thus, we obtain $\psi'^{(m+1)}_0, \psi^{(m+1)}_1$. Applying Borel's lemma, 
we obtain $\psi'_0$ and $\psi_1$ satisfying $F^-_{ij}=O(r^\infty)$.
\end{proof}
By the proof above, we have $\psi'_0=O(r^3)$.  Hence, by \eqref{g-bar-component} we can uniquely determine the expansion of $G$ so that $\underline{g}=g+O(r^\infty)$. Since $g$ is even, the Poincar\'e metric $g'$ is also even by Proposition \ref{induced}. Thus, we have proved the existence of $g'$ in Theorem \ref{normalization-g-prime}.

Let $\wh{g}{\,'}$ be another  $S^1$-invariant even Poincar\'e metric on $\mathcal{C}\times(0, \hat\e)_{\hat r}$ which satisfies the conditions (i), (ii) in Theorem \ref{normalization-g-prime}.
 For a representative metric $g^{\mathrm{F}}_\th$, there exists a unique diffeomorphism 
 $\varphi\colon \mathcal{C}\times[0, \e)_{r}\to\mathcal{C}\times[0, \hat\e)_{\hat r}$ fixing points 
 on $\mathcal{C}\times\{0\}$ such that $\varphi^*\wh{g}{\,'}$ is in the normal form \eqref{g-prime-normal-form}. Note that $\varphi$ is $S^1$-equivariant by uniqueness and the $S^1$-invariance of $g^{\mathrm{F}}_\th$, $\wh{g}{\,'}$. Since $\varphi^*\wh{g}{\,'}$ also satisfies (i), (ii), and the expansions of $\psi_0, \psi_1, G$ are uniquely determined by these conditions, we conclude that $\varphi^*\wh{g}{\,'}=g'+O(r^\infty)$. This completes the proof of Theorem \ref{normalization-g-prime}.
\begin{rem} Since $\frac{1}{2}F_{ij}\boldsymbol{\th}^i\wedge\boldsymbol{\th}^j$ is closed, the self-dual equation $F^-_{ij}=O(r^\infty)$ implies that $F_{ij}$ is divergence-free with respect to $\underline{g}=g$: 
\begin{equation*}
\nabla^i F_{ij}=O(r^\infty).
\end{equation*} 
Although this condition imposes differential equations on $\psi_0, \psi_1$,  it determines their expansions only up to finite order. 
Hence, the self-dual equation cannot be replaced by this weaker condition.
\end{rem}
\subsection{Relation between the connection and curvature of $g'$ and $\underline{g}$} 
To prove Theorem \ref{Poincare}, we rewrite the conditions (i), (ii) in Theorem \ref{normalization-g-prime} in terms of curvature quantities of $g'$. To this end, we first relate the Levi-Civita connection and the curvature of an $S^1$-invariant Poincar\'e metric $g'$ to those of the induced $\Theta$-metric $\underline{g}$ (in general dimensions). 
 
Let $(\th^i)=(\th^1,\dots, \th^n)$ be an arbitrary coframe on $X$ and $\omega_i{}^j$ the connection 1-forms of the Levi-Civita connection $\nabla$ of $\underline{g}$ with respect to this coframe. Let $\omega'_a{}^b$ be the connection 1-forms of $\nabla'$ in the coframe 
\begin{equation}\label{coframe-X-prime}
(\th^a)=(\th^i, \th^{n+1}:=\omega).
\end{equation}
In this frame, the metric tensor $g'_{ab}$ is given by $g'_{ij}=\underline{g}_{ij}, \,
g'_{n+1\,n+1}=-1$.

\begin{prop}
The connection forms $\omega'_a{}^b$ of $\nabla'$ are given by
\begin{equation}\label{omega-prime}
\begin{aligned}
\omega'_i{}^j&=\omega_i{}^j-\frac{1}{2}F_i{}^j\omega, 
& 
\omega'_{n+1}{}^j&=-\frac{1}{2}F_k{}^j\th^k, \\
\omega'_{i}{}^{n+1}&=\frac{1}{2}F_{ik}\th^k, 
& \omega'_{n+1}{}^{n+1}&=0.
\end{aligned}
\end{equation}
\end{prop}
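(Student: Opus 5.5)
The plan is to verify the proposed forms by uniqueness: the Levi-Civita connection forms $\omega'_a{}^b$ of $g'$ in the coframe \eqref{coframe-X-prime} are the unique $1$-forms that satisfy two conditions. The first is metric compatibility, $\omega'_{ab}+\omega'_{ba}=dg'_{ab}$, where $\omega'_{ab}:=\omega'_a{}^c g'_{cb}$. The second is the torsion-free structure equation $d\th^a=\th^b\wedge\omega'_b{}^a$. So I will check both conditions for the candidate forms \eqref{omega-prime}. Throughout, the coframe $(\th^i)$ on $X$ is pulled back to $X'$ by $\pi$, and $\omega_i{}^j$ and $F_{ij}$ are regarded as $S^1$-invariant basic forms on $X'$. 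In particular $F_{ij}$ has no $n+1$ components, consistent with $K^aF_{ab}=0$.

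\emph{Metric compatibility.} Since $g'_{ij}=\underline{g}_{ij}$ (pulled back) and $g'_{n+1\,n+1}=-1$, we have $dg'_{ij}=d\underline{g}_{ij}$ and $dg'_{n+1\,n+1}=0$. Lowering indices, the candidate gives $\omega'_{ij}=\omega_{ij}-\frac12F_{ij}\omega$. Because $F_{ij}$ is skew, $\omega'_{ij}+\omega'_{ji}=\omega_{ij}+\omega_{ji}=d\underline{g}_{ij}$. Next, $\omega'_{i\,n+1}=\omega'_i{}^{n+1}g'_{n+1\,n+1}=-\frac12F_{ik}\th^k$ and $\omega'_{n+1\,i}=-\frac12F_{k i}\th^k=\frac12F_{ik}\th^k$, which sum to $0=dg'_{i\,n+1}$. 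Finally, $\omega'_{n+1\,n+1}=0$, as required.

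\emph{Structure equations.} For $a=j$ we need
\[
\th^i\wedge\omega'_i{}^j+\omega\wedge\omega'_{n+1}{}^j=\th^i\wedge\omega_i{}^j-\tfrac12F_i{}^j\,\th^i\wedge\omega-\tfrac12F_k{}^j\,\omega\wedge\th^k .
\]
The last two terms cancel, leaving $\th^i\wedge\omega_i{}^j=d\th^j$, which holds by the torsion-freeness of $\nabla$ pulled back. For $a=n+1$ we need
\[
\th^i\wedge\omega'_i{}^{n+1}+\omega\wedge\omega'_{n+1}{}^{n+1}=\tfrac12F_{ik}\,\th^i\wedge\th^k=d\omega .
\]
This is exactly the definition of $F$, provided $d\omega$ has no $\omega$-component. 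That holds because $d\omega$ descends to $X$ (Proposition \ref{induced}), or equivalently because $K\lrcorner\,d\omega=\mathcal{L}_K\omega-d(\omega(K))=0$ by $S^1$-invariance and $\omega(K)=1$.

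The only point needing care is the last one: confirming that $d\omega$ is basic, so that $F_{a\,n+1}=0$ and the expansion $d\omega=\frac12F_{ij}\th^i\wedge\th^j$ in the coframe \eqref{coframe-X-prime} involves only the $\th^i$. Once this is granted, uniqueness of the Levi-Civita connection yields \eqref{omega-prime}. As a consistency check, the result gives $\nabla'K$ through $\omega'_{n+1}{}^j$ in agreement with $F_{ab}=-2\nabla'_aK_b$.
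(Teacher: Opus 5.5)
Your proposal is correct and follows essentially the same route as the paper: check the torsion-free structure equations $d\th^a=\th^b\wedge\omega'_b{}^a$ and metric compatibility $dg'_{ab}=\omega'_{ab}+\omega'_{ba}$ for the candidate forms, then invoke uniqueness of the Levi-Civita connection. Your added justification that $d\omega$ is basic, via $K\lrcorner\, d\omega=\mathcal{L}_K\omega-d(\omega(K))=0$, makes explicit a point the paper takes from Proposition \ref{induced}.
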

\begin{proof}
It suffices to show that these 1-forms satisfy the structure equations 
$d\th^a=\th^b\wedge\omega'_b{}^a$, $dg'_{ab}=\omega'_{ab}+\omega'_{ba}$. 
By using the structure equations for $\omega_i{}^j$, we have
\begin{align*}
\th^b\wedge\omega'_b{}^i&=\th^j\wedge\Bigl(\omega_j{}^i-\frac{1}{2}F_j{}^i\omega\Bigr)+\omega\wedge\Bigl(-\frac{1}{2}F_j{}^i\th^j\Bigr) 
=\th^j\wedge\omega_j{}^i=d\th^i, \\
\th^b\wedge\omega'_b{}^{n+1}&=\th^j\wedge\frac{1}{2}F_{jk}\th^k=d\omega
\end{align*}
and
\[
\omega'_{ij}+\omega'_{ji}=\omega_{ij}+\omega_{ji}=dg'_{ij}, 
\quad 2\omega'_{n+1\, n+1}=0, \quad 
\omega'_{i\,n+1}+\omega'_{n+1\, i}=0.
\]
\end{proof}
Using \eqref{omega-prime}, the components of the divergence $\nabla'^aF_{ab}$ are computed as
\begin{equation}\label{div-F}
\nabla'^aF_{aj}=\nabla^iF_{ij}, \quad \nabla'^aF_{a\,n+1}=\frac{1}{2}|F|^2,
\end{equation}
where $|F|^2:=F_{ab}F^{ab}=F_{ij}F^{ij}$.

Next, we express the curvature tensor $R'_{abcd}$ of $g'$ in terms of the curvature $R_{ijkl}$ of $\underline{g}$ and the tensor $F$. By the symmetry of the curvature tensor, we only need to compute $R'_{ijkl}$, $R'_{n+1\,jkl}$, $R'_{n+1\,j\, n+1\, l}$.

\begin{prop}
The curvature tensor $R'_{abcd}$ is given by 
\begin{equation}\label{R-prime}
\begin{aligned}
R'_{ijkl}&=R_{ijkl}+\frac{1}{2}F_{ij}F_{kl}+\frac{1}{4}F_{ik}F_{jl}-\frac{1}{4}F_{jk}F_{il}, \\
R'_{n+1\,jkl}&=-\frac{1}{2}\nabla_j F_{kl}, \\
R'_{n+1\,j\, n+1\, l}&=\frac{1}{4}F_{jm}F_l{}^m.
\end{aligned}
\end{equation}
The Ricci and scalar curvatures of $g'$ are given by
\begin{equation}\label{ricci-prime}
R'_{ij}=R_{ij}+\frac{1}{2}F_{im}F_j{}^m,  \quad
R'_{n+1\, j}=\frac{1}{2}\nabla^i F_{ij}, \quad
R'_{n+1\,n+1}=\frac{1}{4}|F|^2
\end{equation}
and 
\begin{equation}\label{scalar-prime}
R'=R+\frac{1}{4}|F|^2.
\end{equation}
\end{prop}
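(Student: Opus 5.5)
The computation uses Cartan's second structure equation for the connection forms \eqref{omega-prime}, in the coframe $(\th^i,\omega)$ of \eqref{coframe-X-prime}. The curvature forms are
\[
\Omega'_a{}^b=d\omega'_a{}^b-\omega'_a{}^c\wedge\omega'_c{}^b=\frac12 R'_a{}^b{}_{cd}\th^c\wedge\th^d .
\]
Since the convention $d\th^a=\th^b\wedge\omega'_b{}^a$ is fixed, I will use the matching sign convention $\Omega_a{}^b=d\omega_a{}^b-\omega_a{}^c\wedge\omega_c{}^b$ for $\underline g$. The components $R'_{ijkl}$, $R'_{n+1\,jkl}$, $R'_{n+1\,j\,n+1\,l}$ are then read off by expanding each $\Omega'$ in the basis $\th^k\wedge\th^l$, $\th^k\wedge\omega$. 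Throughout, all quantities ($F_{ij}$, $\underline g$, $\omega_i{}^j$) are pulled back from $X$, so they are $S^1$-invariant and annihilated by $K$. This keeps the exterior derivatives simple: $d\omega=\frac12F_{kl}\th^k\wedge\th^l$ has no $\omega$-component, and for any tensor $T$ on $X$ we have $dT_{i\dots}=\nabla_kT_{i\dots}\th^k+(\text{connection terms})$ with no $\omega$-part.

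First compute $\Omega'_i{}^{n+1}=d(\frac12F_{ik}\th^k)-\omega'_i{}^j\wedge\omega'_j{}^{n+1}$. Substituting $\omega'_i{}^j=\omega_i{}^j-\frac12F_i{}^j\omega$ and using the structure equation on $X$, the unprimed connection terms combine into a covariant derivative:
\[
\frac12\nabla_lF_{ik}\,\th^l\wedge\th^k+\frac14F_i{}^jF_{jk}\,\omega\wedge\th^k .
\]
Lowering with $g'_{n+1\,n+1}=-1$ and using pair symmetry together with the closedness $\nabla_{[l}F_{ik]}=0$ gives $R'_{n+1\,jkl}=-\frac12\nabla_jF_{kl}$. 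The $\omega\wedge\th^k$ part gives $R'_{n+1\,j\,n+1\,l}=\frac14F_{jm}F_l{}^m$.

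Next compute $\Omega'_i{}^j=d\omega_i{}^j-\frac12d(F_i{}^j\omega)-\omega'_i{}^k\wedge\omega'_k{}^j-\omega'_i{}^{n+1}\wedge\omega'_{n+1}{}^j$. The unprimed terms give $\Omega_i{}^j$. The $\omega\wedge\omega$ terms vanish. The $\th\wedge\omega$ terms assemble into $-\frac12\nabla_kF_i{}^j\,\th^k\wedge\omega$, consistent with the previous step. The purely horizontal quadratic terms are
\[
-\frac12F_i{}^j\,d\omega-\frac14F_{ik}F_l{}^j\,\th^k\wedge\th^l ,
\]
using the sign conventions from \eqref{omega-prime}. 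Antisymmetrizing in $(k,l)$ yields
\[
R'_{ijkl}=R_{ijkl}+\frac12F_{ij}F_{kl}+\frac14F_{ik}F_{jl}-\frac14F_{jk}F_{il},
\]
once the signs are checked carefully. The main obstacle is here: keeping track of the signs and factors of $1/2$ from the wedge-product convention $(\omega\wedge\eta)(V,W)=\omega(V)\eta(W)-\omega(W)\eta(V)$ and from the index placement in $F_i{}^j$ versus $F^j{}_i$. I would verify the final formula against the first Bianchi identity and against the known Kaluza--Klein (O'Neill) formula for Riemannian submersions with geodesic fibers.

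Finally, the Ricci curvature is $R'_{bd}=g'^{ac}R'_{abcd}$, with $g'^{ij}=\underline g^{ij}$ and $g'^{n+1\,n+1}=-1$. For $R'_{ij}$:
\[
R_{ij}+\frac14F_{ik}F_j{}^k\cdot(\text{from }R'_{kikj})-R'_{n+1\,i\,n+1\,j}\cdots .
\]
Tracing the formula for $R'_{kilj}$ gives $R_{ij}+\frac34F_{im}F_j{}^m$, and subtracting $\frac14F_{im}F_j{}^m$ from the $n+1$ slot gives $R_{ij}+\frac12F_{im}F_j{}^m$; the $F_{kk}$ terms vanish by skew-symmetry. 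Similarly, $R'_{n+1\,j}=\frac12\nabla^iF_{ij}$ follows from tracing $R'_{n+1\,kij}$ (up to symmetry). The component $R'_{n+1\,n+1}=\underline g^{jl}R'_{j\,n+1\,l\,n+1}=\frac14|F|^2$. Tracing once more with the sign $-1$ gives
\[
R'=R+\frac12|F|^2-\frac14|F|^2=R+\frac14|F|^2 ,
\]
which is \eqref{scalar-prime}.
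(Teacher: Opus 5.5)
Your strategy is the paper's own: Cartan's second structure equation applied to \eqref{omega-prime}, followed by tracing. Your trace computations for $R'_{ij}$, $R'_{n+1\,j}$, $R'_{n+1\,n+1}$ and $R'$ are correct \emph{given} \eqref{R-prime}. The gap is in \eqref{R-prime} itself. Its entire content is signs and coefficients, you defer these (``once the signs are checked carefully''), and the setup you actually write down produces different signs, for two concrete reasons.

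\textbf{The labeling convention.} Your labeling $\Omega'_a{}^b=\frac12R'_a{}^b{}_{cd}\th^c\wedge\th^d$ is the negative of the paper's $\Omega'_d{}^c=\frac12R'_{ab}{}^c{}_d\th^a\wedge\th^b$. With $d\th^a=\th^b\wedge\omega'_b{}^a$, $\Omega'_a{}^b(X,Y)$ is the $e_b$-component of $R'(X,Y)e_a$. So your $R'_{abcd}=\langle R'(e_c,e_d)e_a,e_b\rangle=-\langle R'(e_a,e_b)e_d,e_c\rangle$, whose trace $g'^{ac}R'_{abcd}$ is negative on round spheres. The statement is in the standard convention, the one the paper also uses later, e.g.\ $W_{ijkl}=R_{ijkl}+\frac12(\underline g_{ik}\underline g_{jl}-\underline g_{jk}\underline g_{il})$ when $\mathrm{Ric}(\underline g)=-\frac32\underline g$.

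In your labeling, your own (correct) expression for $\Omega'_i{}^{n+1}$ gives $R'_{n+1\,jkl}=+\frac12\nabla_jF_{kl}$ and $R'_{n+1\,j\,n+1\,l}=-\frac14F_{jm}F_l{}^m$. Hence $R'_{n+1\,n+1}=-\frac14|F|^2$, contradicting the values you assert. The correct sign is fixed independently by the Bochner identity $\mathrm{Ric}'(K,K)=|\nabla'K|^2=\frac14|F|^2$ for the Killing field $K$ of constant length $g'(K,K)=-1$.

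\textbf{The horizontal quadratic term.} Its sign is wrong:
$-\omega'_i{}^{n+1}\wedge\omega'_{n+1}{}^j=-\bigl(\frac12F_{ik}\th^k\bigr)\wedge\bigl(-\frac12F_l{}^j\th^l\bigr)=+\frac14F_{ik}F_l{}^j\,\th^k\wedge\th^l$, not $-\frac14$. With your $-\frac14$ the resulting $R'_{ijkl}$ violates the first Bianchi identity in either labeling. In yours it yields $-\frac12F_{ij}F_{kl}$ where $+\frac12F_{ij}F_{kl}$ is needed.

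\textbf{The repair.} Use the paper's labeling and the corrected sign, so that
$\Omega'_l{}^k=\Omega_l{}^k-\frac12\nabla_jF_l{}^k\,\th^j\wedge\omega-\frac12F_l{}^k\,d\omega+\frac14F_{li}F_j{}^k\,\th^i\wedge\th^j$
and
$\Omega'_l{}^{n+1}=\frac12\nabla_iF_{lj}\,\th^i\wedge\th^j+\frac14F_l{}^mF_{mj}\,\omega\wedge\th^j$.
Evaluate on pairs of frame vectors, lower with $g'_{n+1\,n+1}=-1$, and use $dF=0$ for $R'_{n+1\,jkl}$. This gives exactly \eqref{R-prime}, and your traces then go through unchanged.
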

\begin{proof}
We denote the curvature forms of $g', \underline{g}$ by 
\[
\Omega'_d{}^c=\frac{1}{2}R'_{ab}{}^c{}_d\th^a\wedge\th^b, \quad \Omega_l{}^k=\frac{1}{2}R_{ij}{}^k{}_l\th^i\wedge\th^j
\]
respectively. We compute with a coframe $(\th^i)$ on $X$ such that $\omega_i{}^j=0$ at a fixed point. 
By \eqref{omega-prime}, we have
\begin{align*}
\Omega'_l{}^k&=d\omega'_l{}^k-\omega'_l{}^m\wedge\omega'_m{}^k-
\omega'_l{}^{n+1}\wedge\omega'_{n+1}{}^k \\
&=d\Bigl(\omega_l{}^k-\frac{1}{2}F_l{}^k\omega\Bigr)
-\Bigl(\omega_l{}^m-\frac{1}{2}F_l{}^m\omega\Bigr)\wedge\Bigl(\omega_m{}^k-\frac{1}{2}F_m{}^k\omega\Bigr) \\
&\quad -\frac{1}{2}F_{li}\th^i\wedge\Bigl(-\frac{1}{2}F_j{}^k\th^j\Bigr) \\
&=\Omega_l{}^k-\frac{1}{2}\nabla_jF_l{}^k\th^j\wedge\omega-\frac{1}{2}F_l{}^kd\omega+\frac{1}{4}F_{li}F_j{}^k\th^i\wedge\th^j,
\end{align*}
from which we obtain the first and the second equations of \eqref{R-prime}. We also have
\begin{align*}
\Omega'_l{}^{n+1}&=d\omega'_l{}^{n+1}-\omega'_l{}^m\wedge\omega'_m{}^{n+1}-\omega'_l{}^{n+1}\wedge\omega'_{n+1}{}^{n+1}\\
&=\frac{1}{2}d(F_{lj}\th^j)-\Bigl(\omega_l{}^m-\frac{1}{2}F_l{}^m\omega\Bigr)\wedge\frac{1}{2}F_{mj}\th^j \\
&=\frac{1}{2}\nabla_iF_{lj}\th^i\wedge\th^j+\frac{1}{4}F_l{}^m F_{mj}\omega\wedge\th^j
\end{align*}
and the third equation of \eqref{R-prime}. The Ricci and scalar curvatures  are obtained by taking traces.
\end{proof}
\begin{prop}\label{E-prime}
If we define a symmetric tensor $E'_{ab}$ on $X'$ by
\[
E'_{ab}:=R'_{ab}+\frac{n+2}{2}g'_{ab}-\frac{1}{2}F_{ac}F_b{}^c-\frac{1}{4}
(|F|^2-2n-4)K_a K_b,
\]
then we have
\[
E'_{ij}=R_{ij}+\frac{n+2}{2}g_{ij}, \quad 
E'_{n+1\, j}=\frac{1}{2}\nabla^i F_{ij}, \quad 
E'_{n+1\, n+1}=0.
\]
\end{prop}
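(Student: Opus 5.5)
This is a direct substitution of the formulas \eqref{ricci-prime} into the definition of $E'_{ab}$. I will work in the coframe \eqref{coframe-X-prime}, where $g'_{ij}=\underline{g}_{ij}$, $g'_{i\,n+1}=0$ and $g'_{n+1\,n+1}=-1$. The first step is to compute the components of $K_a$. Since $\omega=-K\lrcorner\, g'$, we have $K_a\th^a=-\omega=-\th^{n+1}$. Hence
\[
K_i=0,\qquad K_{n+1}=-1,
\]
and, raising the index, $K^{n+1}=1$, consistent with $\omega(K)=1$. In particular $K_aK_b$ is nonzero only in the $(n+1,n+1)$ slot, where it equals $1$.

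Next I need the components of $F_{ac}F_b{}^c$. From $K^aF_{ab}=0$ we get $F_{n+1\,b}=0$, which is also visible from the fact that $d\omega=\frac12F_{ij}\th^i\wedge\th^j$ has no $\omega$-component. Because the metric is block diagonal, this gives
\[
F_{ic}F_j{}^c=F_{im}F_j{}^m, \qquad F_{n+1\,c}F_b{}^c=0 .
\]
Here the index on $F_j{}^m$ is raised with $\underline g$.

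Now I substitute \eqref{ricci-prime} component by component, recalling that $\underline g=g$ in the statement's notation. For the $(i,j)$ components,
\[
E'_{ij}=R_{ij}+\tfrac12F_{im}F_j{}^m+\tfrac{n+2}{2}\underline g_{ij}-\tfrac12F_{im}F_j{}^m=R_{ij}+\tfrac{n+2}{2}g_{ij}.
\]
For the mixed components, only the Ricci term survives, so $E'_{n+1\,j}=R'_{n+1\,j}=\tfrac12\nabla^iF_{ij}$. For the $(n+1,n+1)$ component,
\[
E'_{n+1\,n+1}=\tfrac14|F|^2-\tfrac{n+2}{2}-0-\tfrac14(|F|^2-2n-4)\cdot 1=0 .
\]

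The main point requiring care is the sign conventions. This means checking that $K_{n+1}=-1$ with $K_{n+1}^2=1$, and that $g'_{n+1\,n+1}=-1$, so that the constant terms in the last component cancel exactly. There is no analytic obstacle.
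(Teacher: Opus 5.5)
Your proposal is correct and follows essentially the same route as the paper: both use $F_{n+1\,j}=0$ and $K_{n+1}=-1$ (so $K_aK_b$ lives only in the $(n+1,n+1)$ slot with value $1$) and substitute \eqref{ricci-prime} component by component, with the constant terms cancelling in $E'_{n+1\,n+1}$ because $g'_{n+1\,n+1}=-1$. Your reading of $g_{ij}$ in the statement as $\underline{g}_{ij}$ is also the right one.
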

\begin{proof}
Noting that $F_{n+1\, j}=0$, $K_{n+1}=-1$, we have by \eqref{ricci-prime}
\begin{align*}
E'_{ij}&=R'_{ij}+\frac{n+2}{2}g_{ij}-\frac{1}{2}F_{ik}F_j{}^k=R_{ij}+\frac{n+2}{2}g_{ij}, \\
E'_{n+1\, j}&=R'_{n+1\, j}=\frac{1}{2}\nabla^i F_{ij}, \\
E'_{n+1\, n+1}&=R'_{n+1\, n+1}-\frac{n+2}{2}-\frac{1}{4}(|F|^2-2n-4)=0.
\end{align*}
\end{proof}
When $n=1$ and  $F^-_{ij}=O(r^\infty)$, it holds that $\nabla^i F_{ij}=O(r^\infty)$. Thus, we obtain:
\begin{cor}\label{Einstein-Maxwell}
Assume that $n=1$ and $F^-_{ij}=O(r^\infty)$. Then, the Einstein equation 
$\mathrm{Ric}(\underline{g})+\frac{3}{2}\underline{g}=O(r^\infty)$
for $\underline{g}$ is equivalent to the equation 
\[
E'_{ab}=O(r^\infty)
\]
for $g'$. 
\end{cor}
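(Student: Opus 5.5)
The corollary follows directly from Proposition \ref{E-prime} once the divergence-free property of $F$ is justified. I will work in the coframe \eqref{coframe-X-prime}, $(\th^i,\th^{n+1}=\omega)$, with $n=1$, where $(\th^i)$ is an arbitrary (e.g. $\Theta$-) coframe on $X$. A symmetric tensor on $X'$ is $O(r^\infty)$ if and only if all of its frame components $E'_{ij}$, $E'_{n+1\,j}$, $E'_{n+1\,n+1}$ are. One should check that this notion is frame independent: the transition between the coframe $(\th^i,\omega)$ and a $\Theta$-type coframe upstairs, like $\boldsymbol\th^i$ together with $\sigma$ scaled suitably, has coefficients polynomial in $r^{-1}$. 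Since $\omega=\sigma-\th/r^2-\psi/r$, changing frame only multiplies by finite powers of $r$, so $O(r^\infty)$ is preserved.

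By Proposition \ref{E-prime}, $E'_{n+1\,n+1}=0$ identically. Also $E'_{ij}=R_{ij}+\frac32\underline g_{ij}$, which is exactly the Einstein tensor of $\underline g$, lifted via $\pi$. It therefore remains to show $E'_{n+1\,j}=\frac12\nabla^iF_{ij}=O(r^\infty)$ under the hypothesis $F^-=O(r^\infty)$. This is the content of the preceding remark, and I would prove it as follows. Since $d\omega$ descends to the closed $2$-form $\frac12F_{ij}\boldsymbol\th^i\wedge\boldsymbol\th^j$ on $X$, we have $dF=0$. Write $F=F^++F^-$ with $*F^\pm=\pm F^\pm$. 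Then
\[
d{*F}=d(F^+-F^-)=d(F-2F^-)=-2\,dF^-.
\]
Because the exterior derivative of a $\Theta$-form that is $O(r^\infty)$ is again $O(r^\infty)$ (the $\Theta$-frame vector fields are smooth multiples of $r$ times ordinary vector fields), it follows that $d{*F}=O(r^\infty)$. Since $\nabla^iF_{ij}$ equals $\pm(*d*F)_j$, and the Hodge star of $\underline g$ has bounded components in the $\Theta$-frame, we conclude $\nabla^iF_{ij}=O(r^\infty)$.

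Putting these together, $E'_{ab}=O(r^\infty)$ holds exactly when $R_{ij}+\frac32\underline g_{ij}=O(r^\infty)$, which gives both directions of the equivalence. The only delicate step is the frame bookkeeping behind the $O(r^\infty)$ statements, namely that the relevant frame changes and the operators $d$ and $*$ only introduce finite powers of $r$. I expect this to be routine.
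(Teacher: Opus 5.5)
Your proposal is correct and follows essentially the same route as the paper. Proposition \ref{E-prime} reduces the equivalence to $\nabla^iF_{ij}=O(r^\infty)$, and the paper (in the remark following Theorem \ref{normalization-g-prime}) derives this from the closedness of $F$ together with $F^-=O(r^\infty)$, which is exactly your identity $d{*F}=-2\,dF^-$; you spell out, correctly, the $O(r^\infty)$ frame bookkeeping that the paper leaves implicit.
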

\subsection{Translation of the self-dual equations}\label{translation-self-dual}
W now rewrite the self-dual equations $F^-_{ij}=0$ and $W^-_{ijkl}=0$ in terms of $g'$.

We fix the orientation of $X'$ so that the volume form 
\[
vol_{g'}=\frac{1}{5!}\varepsilon'_{abcde}\th^a\wedge\th^b\wedge \th^c\wedge\th^d\wedge \th^e, \quad \varepsilon'_{abcde}=\varepsilon'_{[abcde]}
\]
 of $g'$ satisfies
\[
K\lrcorner\, vol_{g'}=\pi^*vol_{g}.
\]
Then, if we set
\[
\mu_{abcd}:=K^e\varepsilon'_{eabcd},
\]
it descends to $\varepsilon_{ijkl}$: $K^a\mu_{abcd}=0,\ \mu_{ijkl}=\varepsilon_{ijkl}$ in the coframe \eqref{coframe-X-prime}. Hence, if we define a tensor $F^-_{ab}$ on $X'$ by
\[
F^-_{ab}:=\frac{1}{2}(F_{ab}-{*F}_{ab})\quad  {\rm with}\quad  {*F}_{ab}:=\frac{1}{2}\mu_{ab}{}^{cd}F_{cd},
\]
it descends to $F^-_{ij}$. Consequently, the equation $F^-_{ij}=O(r^\infty)$ is rewritten as $F^-_{ab}=O(r^\infty)$.

We set $F^2_{ab}:=F_{ac}F_b{}^c$ and define a tensor $\wh{R}_{abcd}$ on $X'$ by
\begin{align*}
\wh{R}_{abcd}&:=R'_{abcd}-K_{[a}\nabla'_{b]}F_{cd}-K_{[c}\nabla'_{d]}F_{ab} 
+\frac{1}{2}K_{c}K_{[a} F^2_{b]d}-\frac{1}{2}K_{d}K_{[a} F^2_{b]c} \\
&\quad -\frac{1}{2}F_{ab}F_{cd}-\frac{1}{4}F_{ac}F_{bd}+\frac{1}{4}F_{bc}F_{ad}.
\end{align*}

\begin{prop}
The tensor $\wh{R}_{abcd}$ descends to the curvature tensor $R_{ijkl}$ on $X$.
\end{prop}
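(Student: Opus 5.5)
We need to show two things: $K^a\wh R_{abcd}=0$ (and likewise in the other slots), and, in the coframe \eqref{coframe-X-prime}, $\wh R_{ijkl}=R_{ijkl}$. The tensor also has to be $S^1$-invariant, but that is automatic, since $g'$, $K$ and $F$ are all $S^1$-invariant. Together these imply that $\wh R_{abcd}$ is the pullback of a tensor on $X$, and that this tensor is $R_{ijkl}$.

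We work in the coframe $(\th^i,\th^{n+1}=\omega)$. In this coframe $K=\pa_{n+1}$, so $K^a=\d^a_{n+1}$, and $K_a=g'_{ab}K^b$ gives $K_i=0$, $K_{n+1}=-1$. We also use $F_{n+1\,b}=0$.

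First, the purely horizontal components. When all indices are $i,j,k,l$, every term containing a factor $K$ vanishes because $K_i=0$. What remains is
\[
\wh R_{ijkl}=R'_{ijkl}-\tfrac12F_{ij}F_{kl}-\tfrac14F_{ik}F_{jl}+\tfrac14F_{jk}F_{il}.
\]
By the first line of \eqref{R-prime}, this equals $R_{ijkl}$.

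Second, by the symmetries of $\wh R$ (skew in $ab$ and in $cd$, and symmetric under the pair exchange $ab\leftrightarrow cd$, all visible from its definition), it suffices to check that the components $\wh R_{n+1\,jkl}$ and $\wh R_{n+1\,j\,n+1\,l}$ vanish.

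For $\wh R_{n+1\,jkl}$, the only nonzero $K$-term is
\[
-K_{[n+1}\nabla'_{j]}F_{kl}=\tfrac12\nabla'_jF_{kl}.
\]
The term $K_{[k}\nabla'_{l]}F_{n+1\,j}$ vanishes because $K_k=K_l=0$. The $K_cK_{[a}F^2_{b]d}$ terms vanish because $c,d$ are horizontal, and every $FF$ term contains a factor $F_{n+1\,\cdot}=0$. So it remains to show $\nabla'_jF_{kl}=\nabla_jF_{kl}$. Using \eqref{omega-prime}, the correction $\omega'_i{}^j-\omega_i{}^j=-\tfrac12F_i{}^j\omega$ has no horizontal component. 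The other correction, $\omega'_k{}^{n+1}=\tfrac12F_{km}\th^m$, only enters through components $F_{n+1\,\cdot}$, which vanish. Hence $\nabla'_jF_{kl}=\nabla_jF_{kl}$, and this cancels $R'_{n+1\,jkl}=-\tfrac12\nabla_jF_{kl}$ from \eqref{R-prime}.

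For $\wh R_{n+1\,j\,n+1\,l}$, the derivative terms reduce to $\tfrac12\nabla'_jF_{n+1\,l}$ and its symmetric counterpart $\tfrac12\nabla'_lF_{n+1\,j}$. These are not obviously zero, because the Christoffel symbols contribute even though $F_{n+1\,\cdot}=0$:
\[
\nabla'_jF_{n+1\,l}=-\omega'_{n+1}{}^m(e_j)F_{ml}=\tfrac12F_j{}^mF_{ml}=-\tfrac12F^2_{jl}.
\]
Each derivative term therefore contributes $-\tfrac14F^2_{jl}$, giving $-\tfrac12F^2_{jl}$ in total. The quadratic $K$-terms contribute $\tfrac12K_{n+1}K_{[n+1}F^2_{j]l}=\tfrac14F^2_{jl}$; the other ordering vanishes because $K_l=0$. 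Adding $R'_{n+1\,j\,n+1\,l}=\tfrac14F^2_{jl}$ gives $\tfrac14-\tfrac12+\tfrac14=0$.

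The main obstacle is keeping the signs and factors of the antisymmetrization consistent. In particular, one has to verify that the definition of $\wh R$ really produces these coefficients, with the convention $K_{[a}X_{b]}=\tfrac12(K_aX_b-K_bX_a)$, which is why I recompute each term carefully. Finally, since $\wh R$ is tensorial, $S^1$-invariant and annihilated by $K$ in every slot, it descends to $X$, and by the first step the descended tensor is $R_{ijkl}$.
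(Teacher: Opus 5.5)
Your proof is correct. Its skeleton matches the paper's: check $\wh R_{ijkl}=R_{ijkl}$ from the first line of \eqref{R-prime}, use $S^1$-invariance and the symmetries $\wh R_{abcd}=\wh R_{[ab][cd]}=\wh R_{cdab}$, and reduce to $K^a\wh R_{abcd}=0$.

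The difference is in how that last vanishing is verified. The paper does it frame-free in one line, using only that $K$ is Killing:
\[
K^aR'_{abcd}=\nabla'_b\nabla'_cK_d=-\tfrac12\nabla'_bF_{cd},\qquad K^b\nabla'_bF_{cd}=0,\qquad K^aF_{ab}=0,
\]
together with $K^a\nabla'_dF_{ab}=-(\nabla'_dK^a)F_{ab}$. It therefore never needs the second and third lines of \eqref{R-prime} or the connection forms \eqref{omega-prime}.

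You instead work in the adapted coframe and rely on exactly those formulas. Your identity $\nabla'_jF_{n+1\,l}=-\tfrac12F^2_{jl}$, the Christoffel contribution you flagged, is the component form of $K^a\nabla'_jF_{al}=-(\nabla'_jK^a)F_{al}$. In the paper this appears as the term $K_{[c}(\nabla'_{d]}K^a)F_{ab}$. You also need the extra comparison $\nabla'_jF_{kl}=\nabla_jF_{kl}$, which you justified correctly. It arises only because \eqref{R-prime} is written with the Levi-Civita connection of $\underline g$, whereas the paper's Killing identity produces $\nabla'F$ directly, so the cancellation is immediate.

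Your route is longer but serves as an independent consistency check between \eqref{R-prime} and the definition of $\wh R$. The paper's route is shorter and does not depend on the explicit curvature formulas beyond the horizontal components.
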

\begin{proof}
By the first equation in \eqref{R-prime}, the components $\wh{R}_{ijkl}$ in the coframe \eqref{coframe-X-prime} agree with $R_{ijkl}$. Moreover, the tensor $\wh{R}_{abcd}$ is $S^1$-invariant and has the symmetries 
$\wh{R}_{abcd}=\wh{R}_{[ab][cd]}=\wh{R}_{cdab}$. Therefore, it suffices to show the equation $K^a \wh{R}_{abcd}=0$.

Since $K$ is a Killing vector field, we have 
\[
K^a R'_{abcd}=\nabla'_b\nabla'_c K_d=-\frac{1}{2}\nabla'_b F_{cd}, \quad 
K^b \nabla'_b F_{cd}=0.
\]
Thus, we have
\begin{align*}
K^a \wh{R}_{abcd}&=-\frac{1}{2}\nabla'_b F_{cd}+\frac{1}{2}\nabla'_b F_{cd}
+K_{[c}(\nabla'_{d]}K^a)F_{ab}-\frac{1}{4}K_c F^2_{bd}+\frac{1}{4}K_d F^2_{bc} \\
&=0.
\end{align*}
\end{proof}

When $\underline{g}$ satisfies $R_{ij}+\frac{3}{2}\underline{g}_{ij}=O(r^\infty)$, the Weyl curvature of $\underline{g}$ is given by 
\[
W_{ijkl}=R_{ijkl}+\frac{1}{2}(\underline{g}_{ik}\underline{g}_{jl}-\underline{g}_{jk}\underline{g}_{il})+O(r^\infty).
\]
On $X'$, we have
\begin{align*}
\underline{g}_{ik}\underline{g}_{jl}-\underline{g}_{jk}\underline{g}_{il}&=(g'_{ac}+K_aK_c)(g'_{bd}+K_bK_d)-(g'_{bc}+K_bK_c)(g'_{ad}+K_aK_d) \\
&=2 g'_{c[a}g'_{b]d}+2g'_{c[a}K_{b]}K_d-2g'_{d[a}K_{b]}K_c.
\end{align*}
Here, the equation is understood to mean that the right-hand side descends to the left-hand side.
Hence, if we set 
\begin{align*}
 \mathring L_{abcd}&:=\wh{R}_{abcd}+g'_{c[a}g'_{b]d}+g'_{c[a}K_{b]}K_d-g'_{d[a}K_{b]}K_c \\
&=R'_{abcd}-K_{[a}\nabla'_{b]}F_{cd}-K_{[c}\nabla'_{d]}F_{ab} 
+\frac{1}{2}K_{c}K_{[a} F^2_{b]d}-\frac{1}{2}K_{d}K_{[a} F^2_{b]c}\\
&\quad -\frac{1}{2}F_{ab}F_{cd}+\frac{1}{2}F_{c[	a}F_{b]d}+g'_{c[a}g'_{b]d}+g'_{c[a}K_{b]}K_d-g'_{d[a}K_{b]}K_c,
\end{align*}
it descends to the tensor $W_{ijkl}+O(r^\infty)$ and the equation $W^-_{ijkl}=O(r^\infty)$ is equivalent to $\mathring L^-_{abcd}=O(r^\infty)$, where 
\[ 
\mathring L^-_{abcd}:=\frac{1}{2}( \mathring L_{abcd}-*\mathring L_{abcd})\quad  {\rm with}\quad 
*\mathring L_{abcd}:=\frac{1}{2}\mu_{ab}{}^{pq} \mathring L_{pqcd}.
\]
To simplify the formula slightly, we introduce the tensor
\begin{align*}
L_{abcd}&:= \mathring L_{abcd}+\frac{1}{2}F_{ab}F_{cd} \\
&=R'_{abcd}-K_{[a}\nabla'_{b]}F_{cd}-K_{[c}\nabla'_{d]}F_{ab} 
+\frac{1}{2}K_{c}K_{[a} F^2_{b]d}-\frac{1}{2}K_{d}K_{[a} F^2_{b]c}\\
&\quad +\frac{1}{2}F_{c[	a}F_{b]d}+g'_{c[a}g'_{b]d}+g'_{c[a}K_{b]}K_d-g'_{d[a}K_{b]}K_c
\end{align*}
and define $L^-_{abcd}$ in the same way as $\mathring L^-_{abcd}$. Then, under the condition $F^-_{ab}=O(r^\infty)$, we have
\[
 L^-_{abcd}= \mathring L^-_{abcd}+O(r^\infty)
\]
and hence the self-dual equation $W^-_{ijkl}=O(r^\infty)$ is equivalent to 
\[
L^-_{abcd}=O(r^\infty).
\]
Combining this with Corollary \ref{Einstein-Maxwell} , we obtain Theorem \ref{Poincare}.

\section{Ambient metric for the Fefferman metric}

\subsection{$S^1$-invariant straight pre-ambient metrics}
We will construct the ambient metric corresponding to the $S^1$-invariant Poincar\'e metric that we constructed in the previous section.  We begin by recalling the notion of a straight pre-ambient metric for conformal manifolds introduced by Fefferman-Graham \cite{FG}, and adapt it to our setting.

Let $(\mathcal{C}, [g^{\mathrm{F}}_\th])$ be the Fefferman space over a $(2n+1)$-dimensional strictly pseudoconvex CR manifold $M$. The {\it metric bundle} $\mathcal{G}\to \mathcal{C}$ is the $\mathbb{R}_+$-bundle whose sections correspond to representative metrics in $[g^{\mathrm{F}}_\th]$. 
The bundle is equipped with the tautological $2$-tensor $\boldsymbol{g}$, which is expressed as $\boldsymbol{g}=t^2g^{\mathrm{F}}_\th$ with respect to the trivialization $\mathcal{G}\cong\mathcal{C}\times (0, \infty)_t$ by a representative metric $g^{\mathrm{F}}_\th$. 
Let $\wt{\mathcal{G}}$ be a $(2n+4)$-dimensional manifold which contains $\mathcal{G}$ as a hypersurface and is equipped with an $\mathbb{R}_+$-action, denoted by $\d_s\ (s\in\mathbb{R}_+)$, which restricts to the canonical dilation on $\mathcal{G}$. We assume that there exists an $\mathbb{R}_+$-equivaraint diffeomorphism
\[
\wt{\mathcal{G}}\cong \mathcal{G}\times(-\e_0, \e_0)
\]
which restricts to the identification $\mathcal{G}\cong\mathcal{G}\times\{0\}$. Here, $\mathbb{R}_+$ acts only on the first component of $\mathcal{G}\times(-\e_0, \e_0)$. We call $\wt{\mathcal{G}}$ the {\it ambient space} of $(\mathcal{C}, [g^{\mathrm{F}}_\th])$.
\begin{dfn}
A pseudo-Riemannian metric $\wt g$ of signature $(2n+2, 2)$ on $\wt{\mathcal{G}}$ is called a {\it pre-ambient  metric} if it satisfies the following conditions:
\begin{itemize}
\item[(i)]  $\wt g$ is homogeneous of degree $2$: $\d_s^*\wt g=s^2 \wt g,\ s\in\mathbb{R}_+$.
\item[(ii)] The restriction of $\wt g$ to $T\mathcal{G}$ agrees with the tautological $2$-tensor 
$\boldsymbol{g}$.
\end{itemize}
\end{dfn}
Let $\wt g$ be a pre-ambient metric and $\wt\nabla$ the Levi-Civita connection of $\wt g$. Let 
\[
E:=\frac{d}{ds}\Big|_{s=1}\d_s
\]
denote the Euler vector field (the infinitesimal generator of $\mathbb{R}_+$-action) on $\wt{\mathcal{G}}$. The pre-ambient metric $\wt g$ is said to be {\it straight} if it satisfies
\[
\wt\nabla E=\mathrm{id};
\]
see {\cite[Proposition 2.4]{FG}} for other characterizations of the straight condition.

Now we will incorporate the $S^1$-bundle structure into the picture. Since the conformal structure $[g^{\mathrm{F}}_\th]$ is $S^1$-invariant, the $S^1$-action lifts naturally to the metric bunle $\mathcal{G}$ and it preserves the tensor $\boldsymbol{g}$. Moreover, the $S^1$-action commutes with the $\mathbb{R}_+$-action, so we have a $\mathbb{C}^*$-action on $\mathcal{G}$. We assume that the ambient space $\wt{\mathcal{G}}$ also admits a $\mathbb{C}^*$-action, also denoted by $\d$, and a $\mathbb{C}^*$-equivariant diffeomorphism $\wt{\mathcal{G}}\cong \mathcal{G}\times(-\e_0, \e_0)$. We consider a  straight pre-ambient metric $\wt g$ which is $S^1$-invariant:
\[
\d_\lambda^* \wt g=|\lambda|^2 \wt g, \quad \lambda\in\mathbb{C}^*.
\]
Let
\[
\wt K:=\frac{n+2}{2}\frac{d}{d s}\Big|_{s=0}\d_{e^{is}}
\]
which is a multiple of the infinitesimal generator of the $S^1$-action on $\wt{\mathcal{G}}$.  Since $\wt K$ is a Killing vector field, the 2-tensor
\[
\wt F_{AB}:=-2\wt\nabla_A \wt K_B
\]
is skew-symmetric.
\subsection{Relation to $S^1$-invariant Poincar\'e metrics}\label{ambient-Poincare}
The Poincar\'e metric associated with an $S^1$-invariant straight pre-ambient metric $\wt g$ is
defined as follows: A choice of representative metric $g^{\mathrm{F}}_\th$ determines a unique $\mathbb{R}_+$-equivariant diffeomorphism $\wt{\mathcal{G}}\cong \mathcal{G}\times(-\e_0, \e_0)_\rho\cong \mathcal{C}\times(0, \infty)_t\times (-\e_0, \e_0)_\rho$ which is identity on $\mathcal{G}\cong \mathcal{G}\times\{0\}$ such that $\wt g$ is written in the normal form 
\[
\wt g=2\rho dt^2+2tdtd\rho+t^2 g_\rho,
\]
where $g_\rho$ is a smooth family of Lorentzian metrics on $\mathcal{C}$ with $g_0=g^{\mathrm{F}}_\th$. 
Since $g^{\mathrm{F}}_\th$ and $\wt g$ are $S^1$-invariant, the uniqueness implies that the diffeomorphism is $S^1$-equivariant. In particular, $\wt K$ agrees with the trivial extension of $K$ to $\mathcal{C}\times(0, \infty)_t\times (-\e_0, \e_0)_\rho$, and is perpendicular to the Euler field $E=t\partial_t$: 
\begin{equation}\label{E-K}
\wt g(E, \wt K)=0.
\end{equation}
 The squared norm 
\[
\wt \rho:=\wt g(E, E) \bigl(=2t^2\rho\bigr)
\]
of the Euler field gives an $S^1$-invariant defining function of $\mathcal{G}\subset \wt{\mathcal{G}}$ homogeneous of degree $2$. On the hypersurface $\mathcal{H}:=\{\wt\rho=-1\}\subset \wt{\mathcal{G}}$, 
the projection 
\[
\tilde\pi\colon\mathcal{G}\times (-\e_0, \e_0)_\rho \longrightarrow \mathcal{C}\times (-\e_0, \e_0)_\rho
\]
restricts to a diffeomorpshim $\tilde\pi|_{\mathcal{H}}\colon 
\mathcal{H}\to \mathcal{C}\times (-\e_0, 0)_\rho$.
We define a Lorentzian metric $g'$ on $\mathcal{C}\times (0, \e)_r\ (\e=\sqrt{2\e_0})$ by
\[
g':=4\bigl((\chi\circ \tilde\pi|_{\mathcal{H}})^{-1}\bigr)^* \wt g,
\]
where $\chi\colon \mathcal{C}\times (-\e_0, 0)_\rho\to 
\mathcal{C}\times (0, \e)_r$ is the differemorphism given by
\[
\chi (x, \rho):=(x, \sqrt{-2\rho}).
\]
Then, since the map $(\chi\circ \tilde\pi|_{\mathcal{H}})^{-1}$ is given by $t=r^{-1}$, $\rho=-\frac{1}{2}r^2$,   the metric $g'$ is written as
\[
g'=4\frac{dr^2}{r^2}+4r^{-2}g_{-\frac{1}{2}r^2}.
\]
Thus, $g'$ becomes an $S^1$-invariant even Poincar\'e metric with the conformal infinity $(\mathcal{C}, [g^{\mathrm{F}}_\th])$ if one has $g'(K, K)=-1$.
Introducing a new variable $u$ by 
\[
u:=rt=\sqrt{-2\rho}\,t,
\]
we  have the diffeomorphism
\begin{equation}\label{product}
\{\wt\rho<0\}\cong \mathcal{H}\times (0, \infty)_u\cong \mathcal{C}\times (0, \e)_r\times (0, \infty)_u.
\end{equation}
Under this identification, $\wt g|_{\{\wt\rho<0\}}$ is expressed as a warped product metric:
\begin{equation}\label{warped}
\wt g=\frac{u^2}{4}g'-du^2=-\frac{\wt\rho}{4}g'-du^2.
\end{equation}
It follows that
\begin{equation}\label{K-norm}
\wt g(\wt K, \wt K)=-\frac{\wt\rho}{4}g'(K, K).
\end{equation}
Thus, the equation $g'(K, K)=-1$ is equivalent to the equation
\[
 \wt g(\wt K, \wt K)=\frac{\wt \rho}{4}
\]
on $\{\wt\rho<0\}$. We assume that $\wt g$ satisfies this equation.

Let $g^{\mathrm{F}}_{\wh\th}$ be another representative metric. Then, we have the associated 
diffeomorphisms $\wt{\mathcal{G}}\cong \mathcal{G}\times (-\hat{\e}_0, \hat{\e}_0)_{\wh\rho}$, $\mathcal{H}\cong \mathcal{C}\times (0, \hat\e)_{\wh r}$ and an $S^1$-invariant Poincar\'e
 metric on $\mathcal{C}\times (0, \hat\e)_{\wh r}$ as above. The diffeomorphism 
 $\mathcal{C}\times (0, \e)_r\cong\mathcal{C}\times (0, \hat\e)_{\wh r}$ through $\mathcal{H}$ gives an $S^1$-equivariant isometry which extends to a diffeomorphism $\mathcal{C}\times [0, \e)_r\cong \mathcal{C}\times [0, \hat\e)_{\wh r}$ fixing points on $\mathcal{C}\times\{0\}$.
Thus, identifying them by this canonical $S^1$-equivariant isometry, we obtain a manifold $X'\cong \mathcal{C}\times (0, \e)$ endowed with an $S^1$-action and an  $S^1$-invariant even Poincar\'e metric $g'$.

When $n=1$, the condition that $g'$ agrees with the one given by Theorem \ref{Poincare} completely determines the expansion of $g_\rho$ in the normal form of $\wt g$.  
We will rewrite the equations for $g'$ in Theorem \ref{Poincare} in terms of curvature quantities of $\wt g$ and prove Theorem \ref{ambient}.
\subsection{Ambient expression of $E'_{ab}$}
We first deal with the tensor $E'_{ab}$. We fix a representative metric $g^{\mathrm{F}}_\th$ and
work on 
\begin{equation*}
\{\wt\rho<0\}\cong X'\times (0, \infty)_u\cong \mathcal{C}\times (0, \e)_r\times (0, \infty)_u
\end{equation*}
as in \eqref{product}. Note that since we consider the formal expansion of $\wt g$, identities on $\{\wt\rho<0\}$ can be regarded as identities on whole $\wt{\mathcal{G}}$.
\begin{prop}
We have
\begin{equation}\label{E-K-F}
E^A \wt F_{AB}=-2\wt K_B, \quad \wt K^A \wt F_{AB}=\frac{1}{2}E_B.
\end{equation}
\end{prop}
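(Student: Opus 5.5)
The two identities follow directly from the Killing property of $\wt K$, the straightness condition $\wt\nabla E=\mathrm{id}$, the commutation of the $S^1$- and $\mathbb{R}_+$-actions, and the normalization $\wt g(\wt K,\wt K)=\wt\rho/4$. No use of the Poincar\'e metric $g'$ or of the product decomposition \eqref{product} is needed beyond this.

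For the first identity I contract the definition $\wt F_{AB}=-2\wt\nabla_A\wt K_B$ with $E^A$, which gives $E^A\wt F_{AB}=-2(\wt\nabla_E\wt K)_B$. The $\mathbb{C}^*$-action on $\wt{\mathcal{G}}$ makes the $S^1$-action commute with the dilations, so $[E,\wt K]=0$. Since $\wt\nabla$ is torsion-free, this gives $\wt\nabla_E\wt K=\wt\nabla_{\wt K}E$. Straightness $\wt\nabla E=\mathrm{id}$ then gives $\wt\nabla_{\wt K}E=\wt K$. Hence $E^A\wt F_{AB}=-2\wt K_B$.

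For the second identity I first use the Killing equation $\wt\nabla_A\wt K_B=-\wt\nabla_B\wt K_A$ to rewrite
\[
\wt K^A\wt F_{AB}=-2\wt K^A\wt\nabla_A\wt K_B=2\wt K^A\wt\nabla_B\wt K_A=\wt\nabla_B\bigl(\wt g(\wt K,\wt K)\bigr).
\]
By the standing assumption $\wt g(\wt K,\wt K)=\wt\rho/4$, this equals $\frac14\wt\nabla_B\wt\rho$. By straightness again, $\wt\nabla_B\wt\rho=2E^A\wt\nabla_BE_A=2E_B$. Therefore $\wt K^A\wt F_{AB}=\frac12E_B$.

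The only delicate point is that the normalization $\wt g(\wt K,\wt K)=\wt\rho/4$ was derived on $\{\wt\rho<0\}$, while the identity is claimed on all of $\wt{\mathcal{G}}$. As remarked just before the statement, the argument works with formal expansions of $\wt g$ in $\rho$. An identity between smooth tensors that holds on $\{\wt\rho<0\}$ therefore extends by continuity to $\wt\rho=0$ and holds formally to infinite order. So I will simply regard the normalization as holding on the whole ambient space, and both computations above are then pointwise identities valid everywhere.
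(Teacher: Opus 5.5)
Your proof is correct, but it argues differently from the paper.

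\textbf{First identity.} The paper first applies the Killing equation, $E^A\wt F_{AB}=2E^A\wt\nabla_B\wt K_A$, and then uses $E^A\wt\nabla_B\wt K_A=\wt\nabla_B\bigl(\wt g(E,\wt K)\bigr)-\wt K_B$. This step needs the orthogonality \eqref{E-K}, which the paper reads off from the normal form. You use $[E,\wt K]=0$ and torsion-freeness instead, so orthogonality is never needed. In fact your computation plus the Killing equation gives $\wt\nabla_B\bigl(\wt g(E,\wt K)\bigr)=0$, and degree-$2$ homogeneity then recovers \eqref{E-K}.

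\textbf{Second identity.} The paper passes to the warped-product picture \eqref{warped} on $\{\wt\rho<0\}$ and derives \eqref{derivative-warped}, $\wt\nabla_VW=\nabla'_VW+\frac14 g'(V,W)E$. It then uses $\nabla'_KK=0$ and $g'(K,K)=-1$ to get $\wt\nabla_{\wt K}\wt K=-\frac14E$. You use the intrinsic Killing identity $\wt K^A\wt\nabla_A\wt K_B=-\frac12\wt\nabla_B\bigl(\wt g(\wt K,\wt K)\bigr)$, together with $\wt g(\wt K,\wt K)=\wt\rho/4$ and $\wt\nabla_B\wt\rho=2E_B$.

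\textbf{Trade-off.} Your route is shorter and stays entirely on $\wt{\mathcal G}$. It holds pointwise wherever the normalization holds, so the passage from $\{\wt\rho<0\}$ to all of $\wt{\mathcal G}$ concerns only the normalization itself, which you handle the same way the paper does. The paper's route has the advantage that formula \eqref{derivative-warped}, proved along the way, is reused immediately to establish \eqref{F-ambient} and the later ambient identities.
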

\begin{proof}
By \eqref{E-K} and the straight condition $\wt\nabla _AE^B=\d_A{}^B$, we have 
\[
E^A \wt F_{AB}=
2E^A\wt \nabla_B \wt K_A=-2(\wt \nabla_BE^A) \wt K_A=-2\wt K_B.
\]
By using \eqref{warped} and the formula of the covariant differentiation for warped product metrics, we have
\begin{equation}\label{derivative-warped}
\wt \nabla_V W=\nabla'_V W+\frac{1}{4}g'(V, W) E
\end{equation}
for any vector fields $V, W\in \Gamma(TX')\subset \Gamma(T\wt{\mathcal{G}})$.
Thus, we have $\wt\nabla_{\wt K}\wt K=-\frac{1}{4}E$, which is equivalent to $\wt K^A \wt F_{AB}=\frac{1}{2}E_B$.
\end{proof}
In the sequel, we adopt the following notation: For a tensor 
$U_{A_1\cdots A_m}$ on $\wt{\mathcal{G}}$, and a tensor $V_{a_1\cdots a_m}$ on $X'$, we write as
\[
U_{A_1\cdots A_m}=V_{a_1\cdots a_m}
\]
if $U_{A_1\cdots A_m}$ descends to $V_{a_1\cdots a_m}$; namely, under the identification $\{\wt\rho<0\}\cong X'\times (0, \infty)_u$, the  tensor $U_{A_1\cdots A_m}$ has vanishing components except for 
 $U_{a_1\cdots a_m}$ which agrees with $V_{a_1\cdots a_m}$.
For example, we have
\begin{equation}\label{g-prime-ambient}
 g'_{ab}=-\frac{4}{\wt\rho}\,\wt g_{AB}+\frac{4}{\wt\rho^{\,2}}E_A E_B
\end{equation}
by \eqref{warped}.

We set 
\[
\Omega_{AB}:=2E_{[A}\wt K_{B]}
\]
and define a skew symmetric 2-tensor $\wt F'_{AB}$ by
\begin{equation}\label{F-prime}
\wt F'_{AB}:=-\frac{4}{\wt\rho}\,\wt F_{AB}-\frac{8}{\wt\rho^{\,2}}\Omega_{AB}.
\end{equation}

\begin{prop}
We have 
\begin{equation}\label{F-ambient}
\wt F'_{AB}=F_{ab}.
\end{equation}
\end{prop}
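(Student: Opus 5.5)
The plan is to verify \eqref{F-ambient} directly in the product coordinates \eqref{product}, that is, on $\{\wt\rho<0\}\cong X'\times(0,\infty)_u$, using the warped product expression \eqref{warped}. Throughout we exploit that both $F$ and $\wt F$ are exterior derivatives of metric duals of Killing fields. Since $\wt K$ is Killing, $\wt\nabla_A\wt K_B=\pa_{[A}\wt K_{B]}$, so
\[
\wt F_{AB}=-2\pa_{[A}\wt K_{B]},
\]
which means $\tfrac12\wt F_{AB}\th^A\wedge\th^B=-d(\wt K\lrcorner\,\wt g)$. In the same way, on $X'$ we have $F=d\omega=-d(K\lrcorner\, g')$.

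\textbf{Step 1: express $E$, $\wt K$, $\wt\rho$ in the product coordinates.} From $u=rt$ and $t\partial_t$ acting at fixed $r$, we get $E=u\pa_u$. By \eqref{warped}, $\wt\rho=-u^2$ and $E_A\,dx^A=-u\,du$. Since $\wt K$ is the trivial extension of $K$, it is tangent to $X'$. Hence
\[
\wt K\lrcorner\,\wt g=\tfrac{u^2}{4}\,K\lrcorner\, g'=-\tfrac{u^2}{4}\,\omega,
\]
so $\wt K_a=\tfrac{u^2}{4}K_a$ and $\wt K_u=0$.

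\textbf{Step 2: compute $\wt F$.} We have
\[
-d(\wt K\lrcorner\,\wt g)=\tfrac{u^2}{4}\,d\omega+\tfrac{u}{2}\,du\wedge\omega .
\]
Reading off components gives $\wt F_{ab}=\tfrac{u^2}{4}F_{ab}$ and $\wt F_{ua}=-\tfrac{u}{2}K_a$ (using $\omega=-K_a\th^a$).

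\textbf{Step 3: compute $\Omega$.} Since $E_a=0$ and $\wt K_u=0$, we get $\Omega_{ab}=0$, $\Omega_{uu}=0$, and
\[
\Omega_{ua}=E_u\wt K_a=-\tfrac{u^3}{4}K_a .
\]

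\textbf{Step 4: combine in \eqref{F-prime}.} Substitute $-4/\wt\rho=4/u^2$ and $-8/\wt\rho^{\,2}=-8/u^4$.
\begin{itemize}
\item The $X'$-components give $\wt F'_{ab}=\tfrac{4}{u^2}\cdot\tfrac{u^2}{4}F_{ab}=F_{ab}$.
\item The mixed components give $\wt F'_{ua}=\tfrac{4}{u^2}\bigl(-\tfrac u2K_a\bigr)-\tfrac{8}{u^4}\bigl(-\tfrac{u^3}{4}K_a\bigr)=-\tfrac2uK_a+\tfrac2uK_a=0$.
\item $\wt F'_{uu}=0$ by skew-symmetry.
\end{itemize}
Finally, $F_{ab}$ is independent of $u$, so $\wt F'$ descends to $F$ in the sense fixed above. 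As a consistency check, contracting with $E$ should reproduce \eqref{E-K-F}, and it does.

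The main obstacle is sign and normalization bookkeeping: the convention identifying $\wedge$ with bilinear forms, the signs in $\omega=-K\lrcorner\,g'$ and $F=d\omega$, and lowering indices with $\wt g$ rather than $g'$. I will fix these by checking the identity $E^A\wt F_{AB}=-2\wt K_B$ from \eqref{E-K-F} against the Step 2 components. That check is exactly the cancellation in the mixed components, so it pins down the relative sign of the $\Omega$ term.
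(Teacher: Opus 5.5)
Your computation is correct and all the signs check out. With $\wt\rho=-u^2$, $E_u=-u$ and $\wt K_a=\frac{u^2}{4}K_a$, you get $\wt F_{ab}=\frac{u^2}{4}F_{ab}$, $\wt F_{ua}=-\frac{u}{2}K_a$ and $\Omega_{ua}=-\frac{u^3}{4}K_a$. The mixed components of $\wt F'$ then cancel exactly.

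Your route differs from the paper's, though.

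The paper argues in three steps:
\begin{enumerate}
\item It shows $E^A\wt F'_{AB}=0$ using \eqref{E-K-F}. That identity rests on the straight condition $\wt\nabla E=\mathrm{id}$ and on the warped-product connection formula \eqref{derivative-warped}.
\item It uses homogeneity of degree $0$ to conclude that $\wt F'$ descends to $X'$.
\item It identifies the horizontal part by evaluating on trivially extended vector fields $V,W$ on $X'$, again via \eqref{derivative-warped}.
\end{enumerate}

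You instead use the Killing property to write $\frac12\wt F_{AB}\,\th^A\wedge\th^B=-d(\wt K\lrcorner\,\wt g)$. Everything then reduces to computing $d\bigl(\frac{u^2}{4}\omega\bigr)$ in the product coordinates.

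What your approach buys:
\begin{itemize}
\item No Christoffel symbols of the warped product are needed.
\item There is no separate appeal to the straight condition beyond the warped form \eqref{warped}.
\item The cancellation in the $u$-components shows directly why the correction term $\Omega_{AB}$ appears in \eqref{F-prime}.
\item Your consistency check re-derives the first identity of \eqref{E-K-F} as a byproduct.
\end{itemize}

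What the paper's approach buys: it reuses \eqref{E-K-F} and \eqref{derivative-warped}, which it needs anyway. Its pattern (kill the $E$-contraction, use homogeneity, then compute horizontally) is exactly the one it repeats for $\wt\nabla_A\wt F_{BC}$ in \eqref{ambient-nabla-F}. There the quantity is not an exterior derivative, so your shortcut does not apply.
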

\begin{proof}
By \eqref{E-K-F}, we have $E^A\wt F'_{AB}=0$. Since $\wt F'_{AB}$ is homogeneous of degree 0, it descends to a 2-form on $X'$. 
We take arbitrary vector fields $V, W$ on $X'$ and extend them trivially to $\{\wt\rho<0\}\cong X'\times (0, \infty)_u$. Then, by \eqref{derivative-warped}, we have 
\begin{align*}
\wt F'_{AB}V^AW^B&=-\frac{4}{\wt\rho}\,\wt F_{AB}V^A W^B 
=\frac{8}{\wt\rho}\,\wt g(\wt\nabla _V \wt K, W) 
=\frac{8}{\wt\rho}\, \wt g(\nabla'_V K, W) \\
&=-2g'(\nabla'_V K, W)=F_{ab}V^a W^b.
\end{align*}
Thus, $\wt F'_{AB}$ descends to $F_{ab}$.
\end{proof}
By this proposition, we have
\begin{equation}\label{F2-ambient}
F_{ac}F_b{}^c=-\frac{\wt\rho}{4}\,\wt F'_{AC}\wt F'_B{}^C
=-\frac{4}{\wt\rho}\,\wt F_{AC}\wt F_B{}^C+\frac{16}{\wt\rho^{\,2}}\wt K_A\wt K_B+\frac{4}{\wt\rho^{\,2}}\,E_A E_B.
\end{equation}
Taking the trace yields
\begin{equation}\label{norm-F}
|F|^2=\frac{\wt\rho^{\,2}}{16}|\wt F'|^2=|\wt F|^2-2.
\end{equation}
By \eqref{g-prime-ambient}, we also have
\begin{equation}\label{F-g}
F_{ac}F_b{}^c-g'_{ab}=
-\frac{4}{\wt\rho}(\wt F_{AC}\wt F_B{}^C-\wt g_{AB})+
\frac{16}{\wt\rho^{\,2}}\wt K_A \wt K_B.
\end{equation}

Now we set 
\[
\wt E_{AB}:=\wt R_{AB}+\frac{2}{\wt\rho}(\wt F_{AC}\wt F_B{}^C-\wt g_{AB})
-\frac{4}{\wt\rho^{\,2}}(|\wt F|^2-2n-4)\wt K_A \wt K_B,
\]
where $\wt R_{AB}=\wt R_{CA}{}^C{}_{B}$ is the Ricci tensor of $\wt g$.
\begin{prop}
We have $\wt E_{AB}=E'_{ab}$.
\end{prop}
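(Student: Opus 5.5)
The plan is to compute the Ricci tensor $\wt R_{AB}$ directly from the warped product expression \eqref{warped}, and then to convert every remaining term of $\wt E_{AB}$ into tensors on $X'$ using \eqref{F-g}, \eqref{norm-F} and the relation between $\wt K_A$ and $K_a$. Throughout I work on $\{\wt\rho<0\}\cong X'\times(0,\infty)_u$ as in \eqref{product}. Since everything is formal in $\rho$, an identity there gives the identity on all of $\wt{\mathcal{G}}$.

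\emph{Step 1 (Ricci of the cone).} Write $\wt g=-du^2+u^2h$ with $h=\frac14 g'$, where $h$ lives on the $(2n+3)$-dimensional manifold $X'$. I will use the standard warped product formulas, adapted to a timelike radial direction $u$. They give
\[
\wt R_{uu}=0,\qquad \wt R_{ua}=0,\qquad \wt R_{ab}=\mathrm{Ric}(h)_{ab}+(2n+2)h_{ab}.
\]
The sign of the last term is opposite to the Riemannian cone $dr^2+r^2h$, because $-du^2$ is negative. Ricci is invariant under constant rescaling, so $\mathrm{Ric}(h)=\mathrm{Ric}(g')$. Hence $\wt R_{AB}$ descends to $R'_{ab}+\frac{n+1}{2}g'_{ab}$; in particular it has no $E$-components. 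The only place I expect to need care is this sign and dimension count, which I would double-check against the flat case: there $g'$ is hyperbolic of constant curvature $-\frac14$, so $R'_{ab}=-\frac{n+1}{2}g'_{ab}$ and indeed $\wt R_{AB}=0$.

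\emph{Step 2 (translating the remaining terms).} By \eqref{warped}, $\wt K_A=\wt g_{AB}\wt K^B=\frac{u^2}{4}K_a=-\frac{\wt\rho}{4}K_a$. Therefore $K_aK_b=\frac{16}{\wt\rho^{\,2}}\wt K_A\wt K_B$, in the descending sense used in the paper. From \eqref{F-g} I get
\[
-\tfrac12\bigl(F_{ac}F_b{}^c-g'_{ab}\bigr)=\tfrac{2}{\wt\rho}\bigl(\wt F_{AC}\wt F_B{}^C-\wt g_{AB}\bigr)-\tfrac{8}{\wt\rho^{\,2}}\wt K_A\wt K_B,
\]
and \eqref{norm-F} gives $|F|^2=|\wt F|^2-2$.

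\emph{Step 3 (assembly).} Rewrite
\[
E'_{ab}=\Bigl(R'_{ab}+\tfrac{n+1}{2}g'_{ab}\Bigr)-\tfrac12\bigl(F_{ac}F_b{}^c-g'_{ab}\bigr)-\tfrac14\bigl(|F|^2-2n-4\bigr)K_aK_b .
\]
Substitute Step 1 for the first bracket and Step 2 for the other two terms. The $\wt K_A\wt K_B$ coefficients then combine as
\[
-\tfrac{4}{\wt\rho^{\,2}}\bigl(|\wt F|^2-2n-6\bigr)-\tfrac{8}{\wt\rho^{\,2}}=-\tfrac{4}{\wt\rho^{\,2}}\bigl(|\wt F|^2-2n-4\bigr),
\]
which reproduces exactly the definition of $\wt E_{AB}$.

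Finally, the right-hand side is built from the ambient tensors that descend. Since each piece has vanishing $E$-components (equivalently, vanishing $u$-components), $\wt E_{AB}$ descends to $E'_{ab}$, which is the claim.
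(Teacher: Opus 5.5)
Your proposal is correct and follows essentially the same route as the paper: you obtain $\wt R_{AB}=R'_{ab}+\frac{n+1}{2}g'_{ab}$ from the warped-product structure \eqref{warped}, then substitute \eqref{F-g}, \eqref{norm-F} and $\wt K_A=-\frac{\wt\rho}{4}K_a$, and your bookkeeping of the $\wt K_A\wt K_B$ coefficient is right. The only cosmetic difference is that you quote the Ricci formulas for the cone $-du^2+u^2h$ directly, which also give the vanishing $u$-components, whereas the paper uses straightness to get $E^A\wt R_{ABCD}=0$ and traces the full curvature formula \eqref{wt-R}, which it needs anyway for $\wt L_{ABCD}$.
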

\begin{proof}
The straight condition $\wt\nabla _AE^B=\d_A{}^B$ implies $E^A\wt R_{ABCD}=0$, so the curvature tensor of $\wt g$ has only the horizontal components $\wt R_{abcd}$. The formula for the curvature of warped product metrics gives
\begin{equation}\label{wt-R}
\wt R_{abcd}=-\frac{\wt\rho}{4}R'_{abcd}-\frac{\wt\rho}{16}(g'_{ac}g'_{bd}-g'_{bc}g'_{ad}).
\end{equation}
Thus, we have
\begin{equation}\label{ambient-ricci}
\wt R_{AB}=R'_{ab}+\frac{n+1}{2}g'_{ab}.
\end{equation}
From \eqref{norm-F}, \eqref{F-g}, \eqref{ambient-ricci}, we have
\begin{align*}
E'_{ab}&=R'_{ab}+\frac{n+2}{2}g'_{ab}-\frac{1}{2}F_{ac}F_b{}^c-\frac{1}{4}
(|F|^2-2n-4)K_a K_b \\
&=\wt R_{AB}+\frac{2}{\wt\rho}(\wt F_{AC}\wt F_B{}^C-\wt g_{AB})
-\frac{4}{\wt\rho^{\,2}}(|\wt F|^2-2n-4)\wt K_A \wt K_B.
\end{align*}
\end{proof}
By this proposition, the equation $E'_{ab}=O(r^\infty)$ is equivalent to $\wt E_{AB}=O(\rho^\infty)$.


\subsection{Ambient expressions of $L^-_{abcd}, F^-_{ab}$}
Next, we rewrite the equations $L^-_{abcd}=0$ and $F^-_{ab}=0$.
\begin{prop}
On $\{\wt \rho<0\}\subset \wt{\mathcal{G}}$, we have
\begin{equation}\label{ambient-nabla-F}
\nabla'_a F_{bc}=-\frac{4}{\wt\rho}\wt\nabla_A \wt F_{BC}-\frac{16}{\wt\rho^{\,2}}\wt g_{A[B}\wt K_{C]}-\frac{16}{\wt\rho^{\,3}}E_A\wt K_{[B}E_{C]}.
\end{equation}
\end{prop}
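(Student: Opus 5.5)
The plan is to obtain \eqref{ambient-nabla-F} by differentiating the identity \eqref{F-ambient}, $\wt F'_{AB}=F_{ab}$, with $\wt\nabla$ and then removing the components along $E$. We work on $\{\wt\rho<0\}\cong X'\times(0,\infty)_u$. There a tensor $U_{A_1\cdots A_m}$ descends to a tensor on $X'$ exactly when it is annihilated by $E$ in every slot and is invariant under the dilations. We use the horizontal projector
\[
P_A{}^B:=\d_A{}^B-\wt\rho^{\,-1}E_AE^B,
\]
and for brevity write $(P^{\otimes3}T)_{ABC}:=P_A{}^{A'}P_B{}^{B'}P_C{}^{C'}T_{A'B'C'}$.

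\emph{Step 1: $\nabla'F$ is the horizontal projection of $\wt\nabla\wt F'$.} Take $V,W,Z$ on $X'$ and extend them trivially. By \eqref{derivative-warped}, $\wt\nabla_V W$ differs from $\nabla'_VW$ only by a multiple of $E$. This extra term is killed because $E^A\wt F'_{AB}=0$. Hence
\[
(\wt\nabla_V\wt F')(W,Z)=(\nabla'_VF)(W,Z),
\]
and so $\nabla'_aF_{bc}=(P^{\otimes3}\wt\nabla\wt F')_{ABC}$.

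It is worth checking the $E$-components of $\wt\nabla\wt F'$, to confirm that the projection is genuinely needed. Since $\wt F'$ has degree $0$ we have $\mathcal L_E\wt F'=0$, and together with $\wt\nabla E=\mathrm{id}$ this gives $E^A\wt\nabla_A\wt F'_{BC}=-2\wt F'_{BC}$. Also $E^B\wt\nabla_A\wt F'_{BC}=-\wt F'_{AC}$. So these components are nonzero and must be discarded explicitly.

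\emph{Step 2: expand $\wt\nabla\wt F'$ using \eqref{F-prime}.} We use $\wt\nabla_A\wt\rho=2E_A$, $\wt\nabla_AE_B=\wt g_{AB}$ and $\wt\nabla_A\wt K_B=-\tfrac12\wt F_{AB}$. These give
\[
\wt\nabla_A\wt F'_{BC}=-\frac{4}{\wt\rho}\wt\nabla_A\wt F_{BC}+\frac{8}{\wt\rho^{\,2}}E_A\wt F_{BC}+\frac{32}{\wt\rho^{\,3}}E_A\Omega_{BC}-\frac{8}{\wt\rho^{\,2}}\wt\nabla_A\Omega_{BC},
\]
with
\[
\wt\nabla_A\Omega_{BC}=2\wt g_{A[B}\wt K_{C]}-E_{[B}\wt F_{|A|C]}.
\]

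\emph{Step 3: project and simplify.} Apply $P^{\otimes 3}$ to the expansion from Step 2.
\begin{itemize}
\item Every term containing a factor $E_A$, $E_B$ or $E_C$ is annihilated by the projection.
\item In the term $\wt g_{A[B}\wt K_{C]}$, we have $\wt K$ horizontal by \eqref{E-K}, while $P\wt g P=\wt g-\wt\rho^{\,-1}EE$. This produces the extra term $-\wt\rho^{\,-1}E_AE_{[B}\wt K_{C]}$.
\item The term $\wt\nabla_A\wt F_{BC}$ must be contracted with $E$ in each slot. Using \eqref{E-K-F} and $\wt\nabla E=\mathrm{id}$,
\[
E^B\wt\nabla_A\wt F_{BC}=\wt\nabla_A(-2\wt K_C)-\wt F_{AC}=0.
\]
By homogeneity of $\wt F$ (degree $2$ as a covariant tensor) and torsion-freeness, $E^A\wt\nabla_A\wt F_{BC}=0$ as well.
\end{itemize}
Thus $\wt\nabla\wt F$ is already horizontal and needs no correction.

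Collecting the surviving terms leaves $-\frac{4}{\wt\rho}\wt\nabla_A\wt F_{BC}$, $-\frac{16}{\wt\rho^{\,2}}\wt g_{A[B}\wt K_{C]}$, and the correction coming from $P\wt gP$, which is $-\frac{16}{\wt\rho^{\,3}}E_A\wt K_{[B}E_{C]}$. This is exactly the right-hand side of \eqref{ambient-nabla-F}.

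The main obstacle is the bookkeeping in Step 3. One must verify that $\wt\nabla\wt F$ is horizontal in all three slots, and track how the $E_A$ terms in the expansion cancel against each other or project to zero. The signs and the factors of $\wt\rho$ in the final $E_A\wt K_{[B}E_{C]}$ term are where an error is most likely, so I would double-check this step by contracting both sides with $E^A$ and with $E^B$ and confirming that each contraction vanishes.
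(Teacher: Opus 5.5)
Your argument is correct, and it reaches \eqref{ambient-nabla-F} by a different route from the paper.

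The paper treats the right-hand side $H_{ABC}$ as a given candidate and verifies it. It first shows $E^BH_{ABC}=0$, using $E^B\wt\nabla_A\wt F_{BC}=0$, and invokes $H_{[ABC]}=0$ for the first slot, so that $H$ descends to $X'$. It then identifies the horizontal components through three facts: the Killing identity $\wt\nabla_A\wt F_{BC}=-2\wt K^D\wt R_{DABC}$, the warped-product curvature formula \eqref{wt-R}, and the relation $K^dR'_{dabc}=-\frac12\nabla'_aF_{bc}$ on $X'$.

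You instead derive the formula constructively. You differentiate the already established identity \eqref{F-ambient} and use only the first-order formula \eqref{derivative-warped} to show that $\nabla'F$ is the horizontal projection $P^{\otimes3}\wt\nabla\wt F'$. You then expand using \eqref{F-prime}. The individual steps all check out:
\begin{itemize}
\item your formula for $\wt\nabla_A\Omega_{BC}$;
\item the projection $P\wt gP=\wt g-\wt\rho^{-1}EE$, which yields exactly $-\frac{16}{\wt\rho^{\,3}}E_A\wt K_{[B}E_{C]}$ with the right sign;
\item the vanishing of all $E$-contractions of $\wt\nabla\wt F$. For the second and third slots this follows from \eqref{E-K-F}. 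For the first slot you use homogeneity, where the paper uses closedness.
\end{itemize}

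What your route buys is that no curvature enters at all. It also explains why the last term is there: it is precisely the projection correction to $\wt g$ in $\wt g_{A[B}\wt K_{C]}$. The paper's route is natural in its context because \eqref{wt-R} and the Killing curvature identity are needed anyway, for \eqref{ambient-ricci} and for the tensor $\wt L_{ABCD}$ in \eqref{L-tilde}.
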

\begin{proof}
We denote the right-hand side of \eqref{ambient-nabla-F} by $H_{ABC}$. By \eqref{E-K-F}, we have
\[
E^B\wt \nabla_A \wt F_{BC}=\wt\nabla_A(E^B \wt F_{BC})-(\wt \nabla_AE^B)\wt F_{BC} =-2\wt\nabla_A\wt K_C-\wt F_{AC}=0,
\]
and hence
\begin{align*}
E^B H_{ABC}=-\frac{8}{\wt\rho^{\,2}}E_A\wt K_C+\frac{8}{\wt\rho^{\,3}}\cdot \wt\rho\, E_A\wt K_C 
=0.
\end{align*}
Since $H_{ABC}$ is homogeneous of degree 0 and satisfies $H_{ABC}=H_{A[BC]}, H_{[ABC]}=0$, this implies that $H_{ABC}$ descends to the tensor $H_{abc}$ on $X'$. Since $\wt K$ is a Killing vector field, it satisfies
\[
-2\wt K^D \wt R_{DABC}=-2\wt\nabla_A \wt\nabla_B \wt K_C=\wt\nabla_A\wt F_{BC}.
\]
Then, we have
\begin{align*}
\wt\nabla_a\wt F_{bc}=-2\wt K^D \wt R_{Dabc}&=\frac{\wt\rho}{2}K^d R'_{dabc}+\frac{\wt\rho}{8}
(K_b g'_{ac}-K_c g'_{ab}) \\
&=-\frac{\wt\rho}{4}\nabla'_a F_{bc}+\frac{\wt\rho}{8}
(K_b g'_{ac}-K_c g'_{ab})
\end{align*}
by \eqref{wt-R}. Moreover, noting that 
$\wt g_{ab}=(-\wt\rho/4)g'_{ab}$ and $\wt K_c=(-\wt\rho/4)K_c$, we see that 
\[
\frac{16}{\wt\rho^{\,2}}\wt g_{a[b}\wt K_{c]}=g'_{a[b}K_{c]}.
\]
Thus we obtain $H_{abc}=\nabla'_a F_{bc}$.
\end{proof}
We set $\wt F^2_{AB}:=\wt F_{AC}\wt F_{B}{}^C$. Then, by using \eqref{g-prime-ambient}, \eqref{F-ambient}, \eqref{wt-R} and \eqref{ambient-nabla-F}, we have
\[
L_{abcd}=\wt L_{ABCD},
\]
where
\begin{equation}\label{L-tilde}
\begin{aligned}
\wt L_{ABCD}&:=-\frac{4}{\wt\rho}\wt R_{ABCD}+\frac{8}{\wt\rho^{\,2}}(\wt g_{C[A}\wt g_{B]D}+\wt F_{C[A}\wt F_{B]D} ) \\
&\quad +\frac{16}{\wt\rho^{\,3}}(E_{[A}\wt g_{B][C}E_{D]}+4 \wt K_{[A}\wt F^2_{B][C}\wt K_{D]})\\
&\quad -\frac{16}{\wt\rho^{\,2}}(\wt K_{[A}\wt\nabla_{B]} \wt F_{CD}+\wt K_{[C}\wt\nabla_{D]} \wt F_{AB})
 \\
&\quad  +\frac{16}{\wt\rho^{\,3}}
(\wt F_{C[A}\Omega_{B]D}-\wt F_{D[A}\Omega_{B]C}).
\end{aligned}
\end{equation}

We choose an orientation of $\wt{\mathcal{G}}$ so that $du\wedge vol_{g'}>0$ on $\{\wt\rho<0\}\cong X'\times (0, \infty)_u$. Then, the volume form of the ambient metric 
\[
vol_{\wt g}=\frac{1}{6!}\wt{\varepsilon}_{ABCDEF}\,\th^A\wedge\th^B\wedge\th^C\wedge\th^D\wedge\th^E\wedge\th^F, \quad 
\wt\varepsilon_{ABCDEF}=\wt\varepsilon_{[ABCDEF]}
\]
is given by
\[
vol_{\wt g}=du\wedge \Big(-\frac{\wt \rho}{4}\Bigr)^{5/2}vol_{g'}
=\frac{1}{32}(-\wt\rho)^{5/2}du\wedge vol_{g'}
\]
on $\{\wt\rho<0\}\cong X'\times (0, \infty)_u$. Thus, we have
\[
E\lrcorner\, vol_{\wt g}=\frac{1}{32}(-\wt\rho)^{5/2}u\cdot vol_{g'}=\frac{1}{32}(-\wt\rho)^{3}vol_{g'}
\]
or equivalently,
\[
E^A \wt\varepsilon_{ABCDEF}=\frac{1}{32}(-\wt\rho)^{3} \varepsilon'_{bcdef}.
\]
If we introduce the ambient $4$-form
\[
\wt \mu_{CDEF}:=E^A\wt K^B \wt\varepsilon_{ABCDEF},
\]
we have
\[
\wt \mu_{CDEF}=\frac{1}{32}(-\wt\rho)^{3} K^b\varepsilon'_{bcdef}
=\frac{1}{32}(-\wt\rho)^{3} \mu_{cdef}
\]
by using the preceding identity.
Raising the indices gives
\[
\wt \mu_{CD}{}^{EF}=\left(\frac{4}{\wt\rho}\right)^2\frac{1}{32}(-\wt\rho)^{3} \mu_{cd}{}^{ef}=\frac{-\wt\rho}{2}\mu_{cd}{}^{ef}.
\]
Hence we have
\[
L^-_{abcd}=\wt L^-_{ABCD}
\]
with
\begin{equation}\label{tilde-L-minus}
\wt L^-_{ABCD}:=\frac{1}{2}(\wt L_{ABCD}-*\wt L_{ABCD}),\quad *\wt L_{ABCD}:=\frac{-1}{\wt\rho}\,\wt\mu_{AB}{}^{PQ}\wt L_{PQCD}.
\end{equation}
Thus, the equation $L^-_{abcd}=O(r^\infty)$ is equivalent to $\wt L^-_{ABCD}=O(\rho^\infty)$.
Similarly, we have
\[
F^-_{ab}=(\wt F')^-_{AB}
\]
with
\begin{equation}\label{tilde-F-minus}
(\wt F')^-_{AB}
:=\frac{1}{2}({\wt F'}_{AB}-*\wt F'_{AB}), \quad *{\wt F'}_{AB}:=\frac{-1}{\wt\rho}\,{\wt\mu}_{AB}{}^{PQ}\wt F'_{PQ},
\end{equation}
so the equation $F^-_{ab}=O(r^\infty)$ is equivalent to $(\wt F')^-_{AB}=O(\rho^\infty)$.
\bigskip

{\it Proof of Theorem \ref{ambient}}\quad Let $g^{\mathrm{F}}_\th\in[g^{\mathrm{F}}_\th]$ be a representative metric and $g'$ the associated $S^1$-invariant even  Poincar\'e metric on $X'=\mathcal{C}\times (0, \e)_r$ given by Theorem 
\ref{Poincare}. Since $g'$ is even, it can be written in the normal form
\[ 
g'=4\frac{dr^2}{r^2}+4r^{-2}g_{-\frac{1}{2}r^2}
\]
with a smooth family $g_\rho$ of $S^1$-invariant  Lorentzian metrics on $\mathcal{C}$ with $g_0=g^{\mathrm{F}}_\th$. If we define an $S^1$-invariant straight pre-ambient metric $\wt g$ on 
$\wt{\mathcal{G}}=\mathcal{G}\times (-\e_0, \e_0)_\rho\cong \mathcal{C}\times (0, \infty)_t\times  (-\e_0, \e_0)_\rho$ by 
\[
\wt g=2\rho dt^2+2tdtd\rho+t^2 g_\rho,
\]
it satisfies the equations in Theorem \ref{ambient} by the previous computations, and conversely these equations uniquely determine the expansion of $g_\rho$. 

Suppose that $\wt{g}{\,'}$ is another $S^1$-invariant straight pre-ambient metric on 
$\mathcal{G}\times (-\e'_0, \e'_0)_{\rho'}$ satisfying the same equations. By the general theory of ambient metric, there exists a unique $\mathbb{R}_+$-equivariant orientation preserving diffeomorphism $\Phi\colon\mathcal{G}\times (-\e_0, \e_0)_\rho\to \mathcal{G}\times (-\e'_0, \e'_0)_{\rho'}$, fixing points on $\mathcal{G}\times\{0\}$, such that $\Phi^*\wt{g}{\,'}$ is in the normal form with respect to $g^{\mathrm{F}}_\th$. The $S^1$-invariance of $g^{\mathrm{F}}_\th$, $\wt{g}{\,'}$ implies that $\Phi$ is $S^1$-equivariant and hence $\mathbb{C}^*$-equivariant.  Since $\Phi^*\wt{g}{\,'}$ also satisfies the equations in Theorem \ref{ambient}, it must have the same expansion as $\wt g$. This proves Theorem \ref{ambient}. \qed

\subsection{Construction of CR GJMS operators in dimension three}\label{GJMS-construction}
Since our ambient metric $\wt g$ is determined to infinite order, it enables us to construct the CR GJMS operators
\[
P_{w, w'}\colon \mathcal{E}(w, w')\longrightarrow \mathcal{E}(w-k, w'-k)\quad (k=w+w'+2\in\mathbb{N}_+)
\]
on CR 3-manifolds without an upper bound of $k$. 

We locally define a $3$-fold covering of $\mathcal{C}$ by 
\[
\mathcal{C}^{1/3}:=\bigl(\mathcal{E}(-1, 0)\setminus\{0\}\bigr)/\mathbb{R}_+,
\]
and consider the pull-back of the Fefferman metric via the covering map $\mathcal{C}^{1/3}\to\mathcal{C}$, for which we keep the same notation $[g^{\mathrm{F}}_\th]$. Then, the metric bundle $\mathcal{G}^{1/3}$ for $(\mathcal{C}^{1/3}, [g^{\mathrm{F}}_\th])$ is a $3$-fold covering of $\mathcal{G}$. 
An $S^1$-invariant section $t$ of $\mathcal{E}(-1, 0)\setminus\{0\}\to \mathcal{C}^{1/3}$ 
defines a CR scale $|t|^{-2}\in\mathcal{E}(1, 1)$ and hence a representative metric $g^{\mathrm{F}}_\th$. Since a  rescaling $\wh t=e^\Upsilon t\ (\Upsilon\in C^\infty(M))$ results in the rescaling 
$g^{\mathrm{F}}_{\wh\th}=e^{2\Upsilon}g^{\mathrm{F}}_\th$,  we can identify $\mathcal{G}^{1/3}$ with $\mathcal{E}(-1, 0)\setminus\{0\}$. We  also have the $S^1$-invariant ambient metric $\wt g$ for $(\mathcal{C}^{1/3}, [g^{\mathrm{F}}_\th])$ on the $3$-fold covering ${\wt{\mathcal{G}}}^{1/3}\cong {\mathcal{G}}^{1/3}\times (-\e_0, \e_0)_\rho$ of $\wt{\mathcal{G}}$. 

Now we apply the GJMS construction \cite{GJMS, GG} to this setting. Let $\wt\Delta=-\wt\nabla_A\wt\nabla^A$ be the Laplacian of $\wt g$. We identify a CR density $f\in\mathcal{E}(w, w')\ (k=w+w'+2\in\mathbb{N}_+)$ with a homogeneous function on $\mathcal{G}^{1/3}$ and extend it arbitrarily to a function $\wt f$ on ${\wt{\mathcal{G}}}^{1/3}$ having the same homogeneity: $\d_\lambda^*\wt f=\lambda^w\bar{\lambda}^{w'} \wt f$. Then, since $\wt g$ satisfies $\d_\lambda^* \wt g=|\lambda|^2 \wt g$, we have
\[
\d_\lambda^*\bigl(\wt\Delta^k \wt f\,\bigr)=|\lambda|^{-2k}\lambda^w\bar{\lambda}^{w'} \wt\Delta^k \wt f\quad (\lambda\in\mathbb{C}^*),
\]
and the theorem of GJMS \cite{GJMS} implies that the CR density 
\[
P_{w, w'}f:=\bigl(\wt\Delta^k \wt f\,\bigr)|_{\mathcal{G}^{1/3}}\in\mathcal{E}(w-k, w'-k)
\]
is independent of the choice of extension and has the principal part $\Delta_b^k$. Since the coefficients of the expansion of $\wt g$ have universal expressions in terms of the Tanaka-Webster curvature and torsion, so does $P_{w, w'}$. Thus, we obtain Theorem \ref{GJMS-three-dim}.

\medskip

\noindent {\bf Acknowledgment} 
This  work was partially supported by JSPS KAKENHI Grant Number 22K13922.

\end{document}